\newcommand{ \N } [0] { \mathbf{N} }
\newcommand{ \Z } [0] { \mathbf{Z} }
\newcommand{ \Q } [0] { \mathbf{Q} }
\newcommand{ \R } [0] { \mathbf{R} }
\newcommand{ \C } [0] { \mathbf{C} }
\newcommand{ \F } [0] { \mathbf{F} }
\newcommand{ \mult } [0] { \mathbf{G}_{\textup{m}} }
\DeclareMathOperator{\kernel}{ker}
\DeclareMathOperator{\image}{im}
\DeclareMathOperator{\gal}{Gal}
\DeclareMathOperator{\frob}{Fr}
\DeclareMathOperator{\inertia}{In}
\DeclareMathOperator{\unr}{unr}
\DeclareMathOperator{\semisimple}{ss}
\DeclareMathOperator{\myhom}{Hom}
\DeclareMathOperator{\myend}{End}
\DeclareMathOperator{\myaut}{Aut}
\DeclareMathOperator{\myspan}{span}
\DeclareMathOperator{\myrank}{rank}
\DeclareMathOperator{\val}{val}
\DeclareMathOperator{\dom}{dom}
\DeclareMathOperator{\aff}{aff}
\DeclareMathOperator{\tor}{tor}
\DeclareMathOperator{\fixer}{Fix}
\DeclareMathOperator{\identity}{id}
\DeclareMathOperator{\eval}{ev}
\DeclareMathOperator{\pr}{pr}
\DeclareMathOperator{\can}{can}
\DeclareMathOperator{\inc}{inc}
\newcommand{ \canarrow } [0] { \stackrel{\can}{\longrightarrow} }
\newcommand{ \incarrow } [0] { \stackrel{\inc}{\longrightarrow} }
\newcommand{ \directedisom } [0] { \stackrel{ \sim }{ \longrightarrow } }
\newcommand{ \defeq } [0] { \stackrel{\textup{def}}{=} }
\newcommand{ \suchthat } [0] { \hspace{5pt} \vert \hspace{5pt} }
\newcommand{ \chargroup } [0] { X^{*} }
\newcommand{ \cochargroup } [0] { X_{*} }
\newcommand{ \catgroups } [0] { \textup{Groups} }
\newcommand{ \affineheckebasissymbol } [0] {\mathbb{T}}
\newcommand{ \iwahoriheckebasissymbol } [0] {\mathcal{T}}
\newcommand{ \affineheckethetaelementsymbol } [0] {\theta}
\newcommand{ \iwahoriheckethetaelementsymbol } [0] {\Theta}
\newcommand{ \normalizedaffineheckebasissymbol } [0] {\overline{\affineheckebasissymbol}}
\newcommand{ \normalizediwahoriheckebasissymbol } [0] {\overline{\iwahoriheckebasissymbol}}
\newcommand{ \finiteweylsymbol } [0] {\circ}
\newcommand{ \finiteweylgroup } [0] {W_{\finiteweylsymbol}}
\newcommand{ \apartmentsymbol } [0] {\mathcal{A}}
\newcommand{ \alcovesymbol } [0] {\mathfrak{A}}
\newcommand{ \vertexsymbol } [0] {\mathfrak{v}}
\newcommand{ \relativerootsystem } [0] {\Phi_{\finiteweylsymbol}}
\newcommand{ \scaledrootsystemsymbol } [0] {\Sigma}
\newcommand{ \param } [0] {\mathbf{q}}
\newcommand{ \indexedparam } [2] {\param_{#1}(#2)}
\newtheorem{lemma}{Lemma}[subsection]
\newtheorem{prop}{Proposition}[subsection]
\newtheorem{remark}{Remark}[subsection]
\newtheorem*{notation}{Notation}
\newtheorem*{defn}{Definition}
\newtheorem*{corollary}{Corollary}
\newtheorem*{maintheorem}{Main Theorem}
\begin{document}

\title[The Bernstein presentation in general]{The Bernstein presentation for general connected reductive groups}


\author{Sean Rostami}

\address{University of Wisconsin \\ Department of Mathematics \\ 480 Lincoln Dr. \\ Madison, WI 53706-1325}

\email{srostami@math.wisc.edu}

\email{sean.rostami@gmail.com}

\subjclass[2010]{Primary 20C08; Secondary 22E50}

\begin{abstract}
Let $ F $ be a non-Archimedean local field and let $ G $ be a connected reductive affine algebraic $ F $-group. Let $ I \subset G(F) $ be an Iwahori subgroup and denote by $ \mathcal{H} ( G ; I ) $ the Iwahori-Hecke algebra, i.e. the convolution algebra of functions $ G(F) \rightarrow \C $ which are left- and right-invariant by $ I $-translations. This article proves that the Iwahori-Hecke algebra $ \mathcal{H} ( G ; I ) $ has both an Iwahori-Matsumoto Presentation and a Bernstein Presentation analogous to those for affine Hecke algebras on root data found in \cite{lusztig}, and gives a basis (in other words, an explicit Bernstein Isomorphism) for the center $ Z [ \mathcal{H} ( G ; I ) ] $ also analogous to that found in \cite{lusztig}.
\end{abstract}

\maketitle

\tableofcontents

\section{Introduction}

Let $ F $ be a non-Archimedean local field and let $ G $ be a connected reductive affine algebraic $ F $-group. If $ J \subset G(F) $ is a compact open subgroup and $ ( \rho, V ) $ is a smooth representation of $ J $ then the \emph{Hecke algebra} $ \mathcal{H} ( G ; J, \rho ) $ is the convolution algebra of compactly-supported functions $ f : G(F) \rightarrow \myend_{\C} ( V ) $ such that $ f ( \alpha \cdot g \cdot \beta ) = \rho ( \alpha ) \circ f ( g ) \circ \rho ( \beta ) $ for all $ g \in G(F) $ and all $ \alpha, \beta \in J $.

A standard construction of supercuspidal representations of $ G(F) $ is to compactly induce certain representations $ \rho $ of compact open subgroups $ J $, and the $ G(F) $-linear endomorphism ring of such an induced representation is the Hecke algebra $ \mathcal{H} ( G ; J, \rho ) $. The Bushnell-Kutzko theory of types is a general framework and toolset to construct pairs $ ( J, \rho ) $ for which subcategories of smooth representations induced from supercuspidal representations of Levi subgroups of $ G(F) $ can be identified with categories of modules over algebras $ \mathcal{H} ( G ; J, \rho ) $. The prototype for this theory is the Borel-Casselman Theorem, which implies that the unramified principal series (the subcategory of smooth representations of $ G(F) $ whose irreducible subquotients can be found in representations induced from unramified characters on minimal Levi subgroups) is equivalent to the category of modules over an Iwahori-Hecke algebra.

Further, Hecke algebras are also the natural home for certain important functions from the arithmetic theory of Shimura varieties. For example, Pappas and Zhu \cite{PZ} recently proved \emph{Kottwitz's Conjecture}: if $ M $ is a local model (say, over $ \Z_p $) of a Shimura variety with Iwahori level-structure (with the added assumption that in the local Shimura datum $ ( G, \{ \mu \} ) $ of $ M $, the group $ G $ is unramified and the cocharacter $ \mu $ is minuscule) then the semisimple trace-of-Frobenius function on $ M ( \F_p ) $ of the nearby cycles sheaf belongs to the center of an Iwahori-Hecke algebra of $ G $. This trace function can be computed explicitly if a Bernstein Presentation (see \S1.1 below) for the Iwahori-Hecke algebra is known, due to the qualitative characterization in \cite{haines1} of the minuscule ``central Bernstein basis functions'' that the presentation yields.

To date, the Bernstein Presentations in print are due to Bernstein, Zelevinsky, and Lusztig \cite{lusztig} and apply to the affine Hecke algebra $ \mathcal{H} ( \Psi, \Delta, \param ) $ on any based reduced root datum $ ( \Psi, \Delta ) $ with any parameter system $ \param : \Delta \rightarrow \N $. These abstractly defined affine Hecke algebras help clarify the study of smooth representations because in some cases, for example if $ G $ is unramified, any Iwahori-Hecke algebra of $ G $ is naturally isomorphic to an affine Hecke algebra for appropriate choice of root datum and parameter system. Unfortunately, many Iwahori-Hecke algebras, including some of the ones considered in the theory of Shimura varieties, are not affine Hecke algebras for any choice of root datum or parameter system and so are not within the scope of the Bernstein-Zelevinsky/Lusztig work.

In this paper, we establish a Bernstein Presentation for the Iwahori-Hecke algebra of any connected reductive $ F $-group and deduce from it a description of the center.

\subsection{Summary of results}

Let $ F $ be a non-Archimedean local field with integers $ \mathcal{O}_F $ and let $ G $ be a connected reductive affine algebraic $ F $-group. Let $ A \subset G $ be a maximal $ F $-split torus, define the Levi subgroup $ M \defeq C_G(A) $, and let $ \finiteweylgroup \defeq N_G(A)(F) / M(F) $ be the corresponding relative finite Weyl group. Denote by $ \langle - , - \rangle $ the usual pairing $ \chargroup ( A ) \times \cochargroup ( A ) \rightarrow \Z $ and by $ \langle -, - \rangle_{\R} $ its natural $ \R $-bilinear extension.

Let $ I \subset G(F) $ be an Iwahori subgroup compatible with $ A $, i.e. the $ \mathcal{O}_F $-points of the connected group scheme $ \mathfrak{G}^0_{\alcovesymbol} $ attached by \cite{BT2} to an alcove $ \alcovesymbol $ in the apartment of $ A $. Denote by $ \mathcal{H} ( G; I ) $ the Iwahori-Hecke algebra, i.e. the algebra of compactly-supported functions $ f : G(F) \rightarrow \C $ satisfying $ f ( \alpha g \beta ) = f ( g ) $ for all $ g \in G(F) $, $ \alpha, \beta \in I $, with ring operation $ * $ defined by convolution relative to the Haar measure for which $ I $ is unit volume. By the Appendix to \cite{PRH}, it is known that the double-cosets $ I \backslash G(F) / I $ have the Iwahori-Weyl group $ \widetilde{W} $ of $ ( G, A ) $ as a natural system of representatives (see \S\ref{SSparahoricsandiwahoriweylgroups}). If $ w $ is such a representative then denote by $ \iwahoriheckebasissymbol_w $ the characteristic function of $ I w I $ (the collection of such functions is a $ \C $-linear basis for the Iwahori-Hecke algebra) and define $ \param : \widetilde{W} \rightarrow \N $ by $ \param ( w ) \defeq [ I w I : I ] $.

It can be shown (Proposition \ref{Paffineweylgroup}) that $ \widetilde{W} $ contains as a normal subgroup the affine Weyl group $ W_{\aff} ( \scaledrootsystemsymbol ) $ of some reduced root system $ \scaledrootsystemsymbol $ (which can be different from the relative root system of $ ( G, A ) $, although $ \finiteweylgroup $ is the same as the finite Weyl group of $ \scaledrootsystemsymbol $). Let $ \Delta_{\aff} $ be the Coxeter generating set for $ W_{\aff} ( \scaledrootsystemsymbol ) $ corresponding to $ I $ and let $ \Delta_{\finiteweylsymbol} \subset \Delta_{\aff} $ be the Coxeter generating set for $ \finiteweylgroup $. Denote by $ \ell : W_{\aff} ( \scaledrootsystemsymbol ) \rightarrow \N $ the length function induced by $ \Delta_{\aff} $. In fact, $ \widetilde{W} $ is a semidirect product of $ W_{\aff} ( \scaledrootsystemsymbol ) $ by the image $ \Omega_G $ of the Kottwitz homomorphism $ \kappa_G $ from \cite{kottwitz}, so $ \ell $ extends trivially to $ \ell : \widetilde{W} \rightarrow \N $.

Denote by $ \Omega_M $ the image of the Kottwitz homomorphism $ \kappa_M $ of $ M(F) $. It is known from \cite{HRo} that $ \widetilde{W} $ has a semidirect product decomposition $ \widetilde{W} = \Omega_M \rtimes \finiteweylgroup ( \scaledrootsystemsymbol ) $. There is a natural notion of dominance for $ \Omega_M $ (see \S\ref{SSdominance}) and if $ \lambda \in \Omega_M $ is arbitrary then there exist dominant $ \lambda_1, \lambda_2 $ such that $ \lambda = \lambda_1 - \lambda_2 $. Define, following \cite{lusztig}, $ \iwahoriheckethetaelementsymbol_{\lambda} \defeq \normalizediwahoriheckebasissymbol_{\lambda_1} * \normalizediwahoriheckebasissymbol_{\lambda_2}^{-1} $ where $ \normalizediwahoriheckebasissymbol_w \defeq \param ( w )^{-\frac{1}{2}} \iwahoriheckebasissymbol_w $.

The main result of this paper is that $ \mathcal{H} ( G; I ) $ has an \emph{Iwahori-Matsumoto Presentation} (Proposition \ref{Piwahorimatsumotopresentation}) and a \emph{Bernstein Presentation} (Propositions \ref{Pnoncentralbernsteinbasis} / \ref{Pbernsteinadditivityrelation} / \ref{Pbernsteincommutationrelation}), and that the center $ Z [ \mathcal{H} ( G; I ) ] $ has a \emph{Bernstein Isomorphism} (Propositions \ref{Pweylorbitfunctionsarecentral} / \ref{Pcentralbernsteinbasis}):
\begin{maintheorem}
The product of any two basis elements $ \iwahoriheckebasissymbol_w $ is determined by the relations \textbf{(1)} $ \iwahoriheckebasissymbol_w * \iwahoriheckebasissymbol_{w^{\prime}} = \iwahoriheckebasissymbol_{ w w^{\prime} } $ , \textbf{(2)} $ \iwahoriheckebasissymbol_s * \iwahoriheckebasissymbol_s = ( \param(s) - 1 ) \iwahoriheckebasissymbol_s + \param(s) \iwahoriheckebasissymbol_1 $, and \textbf{(3)} $ \iwahoriheckebasissymbol_w * \iwahoriheckebasissymbol_{\tau} = \iwahoriheckebasissymbol_{ w \tau } = \iwahoriheckebasissymbol_{\tau} * \iwahoriheckebasissymbol_{\tau^{-1} w \tau} $ for all $ w, w^{\prime} \in \widetilde{W} $ such that $ \ell ( w w^{\prime} ) = \ell ( w ) + \ell ( w^{\prime} ) $, all $ s \in \Delta_{\aff} $, and all $ \tau \in \Omega_G $.

The set $ \{ \iwahoriheckethetaelementsymbol_{\lambda} * \iwahoriheckebasissymbol_w \suchthat \lambda \in \Omega_M, w \in \finiteweylgroup ( \scaledrootsystemsymbol ) \} $ is a $ \C $-linear basis for the Iwahori-Hecke algebra $ \mathcal{H} ( G ; I ) $, and the product of two basis elements is determined by the relations \textbf{(1)} $ \iwahoriheckethetaelementsymbol_{\lambda} * \iwahoriheckethetaelementsymbol_{\mu} = \iwahoriheckethetaelementsymbol_{\lambda + \mu} $ and \textbf{(2)} $ \iwahoriheckebasissymbol_s * \iwahoriheckethetaelementsymbol_{\lambda} = \iwahoriheckethetaelementsymbol_{s ( \lambda )} * \iwahoriheckebasissymbol_s + \sum_{ \jmath = 0 }^{ \langle \alpha, \lambda \rangle_{\R} - 1 } \indexedparam{\jmath}{s} \iwahoriheckethetaelementsymbol_{ \lambda - \jmath \alpha^{\vee} } $ for all $ \lambda, \mu \in \Omega_M $ and all $ s \in \Delta_{\finiteweylsymbol} $ (here $ \alpha \in \scaledrootsystemsymbol $ is the root corresponding to $ s $ and $ \indexedparam{\jmath}{s}$ are defined more precisely in \S\ref{SScommutationrelation}).

Finally, the set of all elements $ z_{\mathcal{O}} \defeq \sum_{\lambda \in \mathcal{O}} \iwahoriheckethetaelementsymbol_{\lambda} $ for all $ \finiteweylgroup $-orbits $ \mathcal{O} $ in $ \Omega_M $ is a $ \C $-linear basis for the center $ Z [ \mathcal{H} ( G ; I ) ] $.
\end{maintheorem}

Upon completion of the first version of this paper, the author learned of Vign\'{e}ras work \cite{vigneras} on the \emph{pro-$ p $ Iwahori-Hecke algebra}, which also proves these results.

\subsection{Outline of paper}

In \S\ref{Snotation}, we set some notation, recall a few basic objects and functions, and state some well-known facts about them.

In \S\ref{Sgeneralitiesoniwahoriweylgroups}, we prove some basic facts concerning the Iwahori-Weyl group $ \widetilde{W} $. In order to use as much as possible in \S\ref{Sproofbernsteinrelation} the Bernstein Presentation in \cite{lusztig} for extended affine Hecke algebras, a certain based reduced root datum $ ( \Psi_M, \Delta ) $ is defined. This $ \Psi_M $ is not generally the relative root datum of $ ( G, A ) $, but for suitable choice of parameters $ \param $ the extended affine Hecke algebra $ \mathcal{H} ( \Psi_M, \Delta, \param ) $ is ``almost'' $ \mathcal{H} ( G; I ) $.

In \S\ref{Siwahorimatsumotopresentation}, we prove that the Iwahori-Hecke algebra $ \mathcal{H} ( G; I ) $ has an Iwahori-Matsumoto Presentation with parameter system $ \param : \Delta_{\aff} \rightarrow \N $ the expected index function $ s \mapsto [ I s I : I ] $. We also prove that these indices $ [ I s I : I ] $, although seemingly different from the indices that occur in a closely-related situation, are actually the same. This latter fact is not logically necessary to establish the main theorem of this paper.

In \S\ref{Sproofbernsteinrelation}, we define, following \cite{lusztig}, the elements $ \Theta_{\lambda} $ ($ \lambda \in \Omega_M $) that will be used in the Bernstein Presentation, prove the Bernstein relations (an additivity relation and a commutation relation), and prove that $ \{ \Theta_{\lambda} * \iwahoriheckebasissymbol_w \suchthat \lambda \in \Omega_M, w \in \finiteweylgroup \} $ is a basis for $ \mathcal{H} ( G; I ) $. To accomplish most of this, a certain $ \C $-algebra homomorphism $ \iota_{\mathcal{H}} : \mathcal{H} ( G; I ) \twoheadrightarrow \mathcal{H} ( \Psi_M, \Delta_{\aff}, \param ) $ is used to transport analogous known facts from \cite{lusztig} to $ \mathcal{H} ( G; I ) $. For example, because the commutation relation for $ \mathcal{H} ( \Psi_M, \Delta_{\aff}, \param ) $ is known by \cite{lusztig}, the difference between the commutator $ \iwahoriheckebasissymbol_s * \Theta_{\lambda} - \Theta_{s (\lambda)} * \iwahoriheckebasissymbol_s $ and the ``expected right-hand-side'' of the commutation relation for $ \mathcal{H} ( G; I ) $ belongs to $ \kernel ( \iota_{\mathcal{H}} ) $, and a brief inspection of that kernel shows that the difference must actually be $ 0 $.

In \S\ref{Scenter}, we define, again following \cite{lusztig}, the central Bernstein basis functions $ z_{\mathcal{O}} $ for all $ \finiteweylgroup $-orbits $ \mathcal{O} $ in $ \Omega_M $. To prove that these elements are central, the analogous known fact from \cite{lusztig} is exploited as before: the commutators $ z_{\mathcal{O}} * \iwahoriheckebasissymbol_s - \iwahoriheckebasissymbol_s * z_{\mathcal{O}} $ belong to $ \kernel ( \iota_{\mathcal{H}} ) $ and inspection of the support of $ z_{\mathcal{O}} * \iwahoriheckebasissymbol_s - \iwahoriheckebasissymbol_s * z_{\mathcal{O}} $ shows that the commutator must actually be $ 0 $. A similar proof is used for linear-independence of the set of all $ z_{\mathcal{O}} $. To prove spanning, we apply the main theorem from \cite{roro}.

In \S\ref{Scompactbernsteinrelation}, two concrete examples are given that illustrate the connection between (1) the question of whether the parameters appearing in the commutation relation are ``uniform'' or ``alternating'', (2) the construction $ \Phi_{\aff} \leadsto \scaledrootsystemsymbol $ of the scaled (or \'{e}chelonnage) root system, and (3) the relationship of the translation subgroup $ \Omega_M \subset \widetilde{W} $ to $ \cochargroup ( A ) $.

\subsection{Acknowledgements} I thank Thomas Haines for suggesting this question, for many good conversations about the subject, for his continued interest in my development, and for noticing some errors in and offering several other improvements to previous versions of this paper. I thank Shrenik Shah for alerting me to various errors and typos in the final section of the paper. Finally, I thank the referee for many nice improvements to the exposition.

\section{Setup and notation} \label{Snotation}

Let $ F $ be a non-Archimedean local field and $ G $ a connected reductive affine algebraic $ F $-group. The symbols $ \N $, $ \Z $, $ \Q $, $ \R $, and $ \C $ denote the natural numbers (including $ 0 $), the integers, the rational numbers, the real numbers, and the complex numbers. If $ H $ is any connected reductive affine algebraic $ F $-group then $ \widehat{H} $ denotes the (connected) dual $ \C $-group, i.e. the connected reductive $ \C $-group whose root datum (relative to some maximal torus) is that obtained by exchanging characters with cocharacters and roots with coroots in the root datum of $ H $. If $ \Gamma $ is any abelian group, $ \Gamma_{\tor} $ denotes the torsion subgroup. When no confusion is likely, any map that is equal to or induced by an identity map, canonical/tautological function, coordinate projection, or inclusion will be labeled by $ \identity $, $ \can $, $ \pr $, or $ \inc $.

\subsection{Fields and Galois groups}

Fix a separable closure $ F^s $ of $ F $, let $ F^{\unr} \subset F^s $ be the maximal unramified extension of $ F $, and $ L $ the completion of $ F^{\unr} $. Let $ \frob \in \gal ( F^{\unr} / F ) $ be the Frobenius automorphism, and use the same symbol $ \frob $ to refer to its continuous extension to $ L $. Similarly, let $ \inertia \defeq \gal ( F^s / F^{\unr} ) $ be the inertia subgroup and identify $ \inertia $ also with $ \gal ( L^s / L ) $. Denote by $ q $ the size of the residue field of $ F $.

\subsection{Split tori and relative roots}

Let $ A \subset S \subset T $ be $ F $-subtori of $ G $ such that $ A $ is a maximal $ F $-split torus, $ S $ is a maximal $ L $-split torus defined over $ F $, and $ T $ is a maximal torus defined over $ F $. Set $ M \defeq C_G ( A ) $. The perfect pairing $ \chargroup ( A ) \times \cochargroup ( A ) \rightarrow \Z $ is denoted $ \langle - , - \rangle $.

Let $ \relativerootsystem $ be the relative root system of $ ( G, A ) $ and $ \finiteweylgroup = N_G ( A ) / M $ the finite relative Weyl group. Denote by $ \Delta_{\finiteweylsymbol} $ a Coxeter generating set for $ \finiteweylgroup $.

If $ \Phi $ is any root system, denote by $ \finiteweylgroup ( \Phi ) $ the finite Weyl group of that root system (so, for example, $ \finiteweylgroup = \finiteweylgroup ( \relativerootsystem ) $) and by $ \Phi^{\vee} $ the coroot system.

If $ H $ is any affine algebraic $ F $-group then $ \chargroup ( H )_F $ denotes the subgroup of characters $ H \rightarrow \mult $ that are defined over $ F $.

\subsection{Apartments and affine Weyl groups}

Let $ \apartmentsymbol $ be the (enlarged) apartment for $ ( G, A ) $, denote by $ \vertexsymbol \in \apartmentsymbol $ a special vertex, and use $ \vertexsymbol $ to identify $ \apartmentsymbol = \cochargroup ( A ) \otimes_{\Z} \R $. Set $ \apartmentsymbol^{\vee} \defeq \chargroup ( A ) \otimes_{\Z} \R $. The pairing $ \langle - , - \rangle $ extends to an $ \R $-bilinear product $ \apartmentsymbol^{\vee} \times \apartmentsymbol \rightarrow \R $, which is denoted by $ \langle -, - \rangle_{\R} $. Sometimes the semisimple (or reduced) apartment $ \apartmentsymbol^{\semisimple} \defeq \myspan_{\R} ( \relativerootsystem^{\vee} ) \subset \apartmentsymbol $ is also considered.

Let $ \Phi_{\aff} $ be the set of affine roots for $ ( G, A ) $ on $ \apartmentsymbol $. Let $ \scaledrootsystemsymbol $ be the scaled (or \'{e}chelonnage) root system for $ \Phi_{\aff} $, i.e. the unique \emph{reduced} root system in $ \chargroup ( A ) \otimes_{\Z} \R $ such that the nullspaces of the affine functionals $ \scaledrootsystemsymbol + \Z $ are identical to those of $ \Phi_{\aff} $. Note that $ \finiteweylgroup = \finiteweylgroup ( \scaledrootsystemsymbol ) $ and $ \apartmentsymbol^{\semisimple} = \myspan_{\R} ( \scaledrootsystemsymbol^{\vee} ) $ also.

If $ \Phi $ is any reduced root system, denote by $ W_{\aff} ( \Phi ) $ the affine Weyl group in the sense of Ch VI \S2 no. 1 of \cite{bourbaki}, i.e. the group generated by the reflections across the nullspaces in $ \apartmentsymbol^{\semisimple} $ of the affine functionals $ \alpha + n $ for all $ \alpha \in \Phi $ and $ n \in \Z $. The affine Weyl group of $ ( G, A ) $, i.e. the group of affine transformations of $ \apartmentsymbol $ generated by the reflections across the nullspaces of $ \Phi_{\aff} $, is canonically isomorphic to the group $ W_{\aff} ( \scaledrootsystemsymbol ) $ and it is convenient to ignore the distinction. If $ \Psi = ( X, \Phi, X^{\vee}, \Phi^{\vee} ) $ is any root datum, denote by $ \widetilde{W} ( \Psi ) $ the extended affine Weyl group $ X^{\vee} \rtimes \finiteweylgroup ( \Phi ) $ of that root datum.

Let $ \mathcal{C} \subset \apartmentsymbol $ be the Weyl chamber corresponding to $ \Delta_{\finiteweylsymbol} $, i.e. the set of all $ x \in \apartmentsymbol $ such that $ \langle \alpha, x \rangle_{\R} > 0 $ for all $ \alpha \in \Delta_{\finiteweylsymbol} $. Let $ \alcovesymbol \subset \mathcal{C} $ be the alcove at $ \vertexsymbol $ and let $ \Delta_{\aff} $ be the Coxeter generating set for $ W_{\aff} ( \scaledrootsystemsymbol ) $ extended from $ \Delta_{\finiteweylsymbol} $. Note that the hyperplanes that are fixed pointwise by single elements of $ \Delta_{\aff} $ are exactly the walls of $ \alcovesymbol $.

\subsection{Irreducible subsystems} \label{SSirredsubsystems}

\begin{center}
\emph{This notation is used only in \S\ref{SSrorocompat} to establish that the main results of \cite{roro} apply to reducible root systems, despite the assumption in \cite{roro} of irreducibility.}
\end{center}

Let $ \scaledrootsystemsymbol_1, \ldots, \scaledrootsystemsymbol_r $ be the \emph{irreducible} root systems such that $ \scaledrootsystemsymbol = \scaledrootsystemsymbol_1 \cup \cdots \cup \scaledrootsystemsymbol_r $. For each $ i $, set $ \Delta_{\finiteweylsymbol}^i \defeq \Delta_{\finiteweylsymbol} \cap \finiteweylgroup ( \scaledrootsystemsymbol_i ) $ and note that $ \Delta_{\finiteweylsymbol}^i $ is a Coxeter generating set for the finite Weyl group $ \finiteweylgroup ( \scaledrootsystemsymbol_i ) $.

For each $ i $, let $ \Delta_{\aff}^i $ be the Coxeter generating set for the affine Weyl group $ W_{\aff} ( \scaledrootsystemsymbol_i ) $ extended from $ \Delta_{\finiteweylsymbol}^i $. Denote by $ \ell_i $ the length function of the Coxeter group $ ( W_{\aff} ( \scaledrootsystemsymbol_i ), \Delta_{\aff}^i ) $.

Each affine Weyl group $ W_{\aff} ( \scaledrootsystemsymbol_i ) $ acts on the corresponding ``simple'' apartment $ \apartmentsymbol^{\semisimple}_i \defeq \myspan_{\R} ( \scaledrootsystemsymbol_i^{\vee} ) \subset \apartmentsymbol $ and $ \apartmentsymbol^{\semisimple} = \apartmentsymbol^{\semisimple}_1 \times \cdots \times \apartmentsymbol^{\semisimple}_r $. Also, $ W_{\aff} ( \scaledrootsystemsymbol ) = W_{\aff} ( \scaledrootsystemsymbol_1 ) \times \cdots \times W_{\aff} ( \scaledrootsystemsymbol_r ) $ and the function $ W_{\aff} ( \scaledrootsystemsymbol_1 ) \times \cdots \times W_{\aff} ( \scaledrootsystemsymbol_r ) \rightarrow \N $ defined by $ ( w_1, \ldots, w_r ) \mapsto \ell_1 ( w_1 ) + \cdots + \ell_r ( w_r ) $ agrees with $ \ell $.

\subsection{Bruhat-Tits homomorphisms $ \nu_H $} \label{SSbruhattitshomomorphism}

For any connected reductive affine algebraic $ F $-group $ H $, there is the Bruhat-Tits homomorphism
\begin{align*}
\nu_H : H(F) &\longrightarrow \myhom_{\catgroups} ( \chargroup ( H )_F, \Z ) \\
h &\longmapsto \{ \chi \mapsto - \val_F ( \chi ( h ) ) \}
\end{align*}
Set
\begin{equation*}
H(F)^1 \defeq \kernel ( \nu_H )
\end{equation*}

Recall from \S1.2 of \cite{tits} that there is a group homomorphism
\begin{equation*}
\nu : M(F) \longrightarrow \cochargroup ( A ) \otimes_{\Z} \R
\end{equation*}
defined by post-composing $ \nu_M $ with the $ \R $-linear isomorphism
\begin{equation} \label{Ekeyvectorspaceisom1}
\myhom_{\catgroups} ( \chargroup ( M )_F, \Z ) \otimes_{\Z} \R \directedisom \cochargroup ( A ) \otimes_{\Z} \R
\end{equation}
induced by the injective finite-cokernel group homomorphism $ \chargroup ( M )_F \hookrightarrow \chargroup ( A ) $.

Note that the kernel $ \kernel ( \nu ) $ denoted by ``$ Z_c $'' in \S1.2 of \cite{tits} is exactly the same as the subgroup $ M(F)^1 $ defined above since $ \chargroup ( M )_F $ is a free-abelian group with finite rank.

As in \S1.2 of \cite{tits}, we abuse notation and denote by $ \nu $ again the extension of $ M(F) \stackrel{\nu}{\longrightarrow} \cochargroup ( A ) \otimes_{\Z} \R $ to a homomorphism from $ N_G(A)(F) $ to the group of invertible affine transformations of the apartment $ \apartmentsymbol $.

By definition, $ N_G(A)(F) $ acts on the apartment $ \apartmentsymbol $ via this $ \nu $ and, in particular, $ M(F) $ acts on $ \apartmentsymbol $ by translations. The group
\begin{equation*}
W^{\prime} \defeq N_G(A)(F) / M(F)^1
\end{equation*}
from \S1.2 of \cite{tits} acts on the apartment $ \apartmentsymbol $ via $ \nu $.

\subsection{Kottwitz homomorphisms $ \kappa_H $} \label{SSkottwitzhoms}

For any connected reductive affine algebraic $ F $-group $ H $, let $ \kappa $ be the (surjective) Kottwitz homomorphism defined for $ H $ in \S7.7 of \cite{kottwitz} and set
\begin{equation*}
\kappa_H \defeq - \kappa : H(F) \twoheadrightarrow ( \chargroup ( Z ( \widehat{H} ) )_{\inertia} )^{\frob}
\end{equation*}
Set
\begin{align*}
H(F)_1 &\defeq \kernel ( \kappa_H ) \\
\Omega_H &\defeq ( \chargroup ( Z ( \widehat{H} ) )_{\inertia} )^{\frob} \\
\overline{\Omega}_H &\defeq \Omega_H / ( \Omega_H )_{\tor}
\end{align*}

Here $ \frob $ is tacitly assumed to be lifted to $ \gal ( F^s / F ) $, and the $ \inertia $-coinvariants ensure independence from the choice of lift. The action of $ \gal ( F^s / F ) $ on the character group $ \chargroup ( Z ( \widehat{H} ) ) $ is the customary one: the natural action of $ \gal ( F^s / F ) $ on the $ F $-group $ H $ induces an action on the root datum $ \Psi $ of $ H $ (relative to some maximal $ F $-torus) and therefore tautologically on the root datum $ \widehat{\Psi} $ of $ \widehat{H} $, and $ \chargroup ( Z ( \widehat{H} ) ) $ acquires its $ \gal ( F^s / F ) $-action via its representation as the quotient of the character group of $ \widehat{\Psi} $ by the root lattice of $ \widehat{\Psi} $.

\subsection{Relationship between $ \kappa_H $ and $ \nu_H $} \label{SSkottwitzcompatwithbruhattits}

There is a group homomorphism
\begin{equation*}
q_H : \Omega_H \longrightarrow \myhom_{\catgroups} ( \chargroup ( H )_F, \Z )
\end{equation*}
such that $ \kernel ( q_H ) = ( \Omega_H )_{\tor} $. The precise definition of $ q_H $ is not necessary here, but $ q_H $ is essentially (7.4.4) in \cite{kottwitz} after applying the $ \frob $-fixed functor (to see that the codomain of (7.4.4) in \cite{kottwitz} is the same as the codomain of $ q_H $ here, one must use character/cocharacter duality for the torus $ Z ( \widehat{H} )^{\circ} $ and the isomorphism $ \chargroup ( H ) \directedisom \cochargroup ( Z ( \widehat{H} ) ) $).

By \S7.4 of \cite{kottwitz},
\begin{equation*}
q_H \circ \kappa_H = \nu_H
\end{equation*}
(note that $ \kappa_H $ and $ \nu_H $ here \emph{both} differ in sign from those used in \cite{kottwitz}) and therefore
\begin{equation*}
H(F)_1 \subset H(F)^1
\end{equation*}
Combining with $ \kernel ( q_H ) = ( \Omega_H )_{\tor} $ implies that $ \kappa_H $ induces an isomorphism
\begin{equation*}
H(F)^1 / H(F)_1 \directedisom ( \Omega_H )_{\tor}
\end{equation*}

Similar to (\ref{Ekeyvectorspaceisom1}), the map $ q_H $ induces an $ \R $-linear isomorphism
\begin{equation} \label{Ekeyvectorspaceisom2}
\Omega_H \otimes_{\Z} \R \directedisom \myhom_{\catgroups} ( \chargroup ( H )_F, \Z ) \otimes_{\Z} \R
\end{equation}

In the case $ H = M $, the isomorphism (\ref{Ekeyvectorspaceisom2}) has a more direct description: the natural homomorphism $ \cochargroup ( T ) = \chargroup ( \widehat{T} ) \rightarrow \chargroup ( Z ( \widehat{M} ) ) $ induces an injective finite-cokernel group homomorphism $ ( \cochargroup ( T )_{\inertia} )^{\frob} \rightarrow \Omega_M $ and applying $ - \otimes_{\Z} \R $ to
\begin{equation*}
\cochargroup ( A ) \incarrow ( \cochargroup ( T )_{\inertia} )^{\frob} \rightarrow \Omega_M
\end{equation*}
yields the inverse of $ (\ref{Ekeyvectorspaceisom1}) \circ (\ref{Ekeyvectorspaceisom2}) : \Omega_M \otimes_{\Z} \R \directedisom \cochargroup ( A ) \otimes_{\Z} \R $. In particular, $ \overline{\Omega}_M $ is a free-abelian group and $ \myrank ( \overline{\Omega}_M ) = \dim ( A ) $.

\subsection{Parahoric subgroups and Iwahori-Weyl groups} \label{SSparahoricsandiwahoriweylgroups}

For any subset $ S \subset \apartmentsymbol $, denote by $ \fixer_{G(F)} ( S ) $ the subset of $ g \in G(F) $ for which $ g \cdot x = x $ for all $ x \in S $.

Using the viewpoint from the Appendix to \cite{PRH}, define the Iwahori subgroup
\begin{equation*}
I \defeq G(F)_1 \cap \fixer_{G(F)} ( \alcovesymbol )
\end{equation*}
Similarly, define $ K \defeq G(F)_1 \cap \fixer_{G(F)} ( \vertexsymbol ) $. Two compact open subgroups which are closely-related to, but generally distinct from, $ I $ and $ K $ are $ \widetilde{I} \defeq G(F)^1 \cap \fixer_{G(F)} ( \alcovesymbol ) $ and $ \widetilde{K} \defeq G(F)^1 \cap \fixer_{G(F)} ( \vertexsymbol ) $.

Let $ \widetilde{W} $ denote the Iwahori-Weyl group of $ ( G, A ) $, i.e. the group
\begin{equation*}
\widetilde{W} \defeq N_G(S)(F) / T(F)_1
\end{equation*}
occurring in Remark 9 of the Appendix to \cite{PRH}. By Proposition 8 / Remark 9 of the Appendix to \cite{PRH}, any system of representatives in $ N_G(S)(F) $ for $ \widetilde{W} $ is also a system of representatives for the double-cosets $ I \backslash G(F) / I $. A representative of $ w \in \widetilde{W} $ is sometimes, by abuse of notation, also denoted by $ w $.

Since any $ F $-point of $ N_G(S) $ must normalize the maximal $ F $-split subtorus $ A \subset S $, $ N_G(S)(F) \subset N_G(A)(F) $. Since $ * \mapsto \kappa_{*} $ is functorial,
\begin{equation*}
T(F)_1 \subset M(F)_1 \subset M(F)^1
\end{equation*}
and so there is a natural group homomorphism
\begin{equation*}
\widetilde{W} \rightarrow N_G(A)(F) / M(F)^1 = W^{\prime}
\end{equation*}
By definition, the Iwahori-Weyl group $ \widetilde{W} $ acts on the apartment $ \apartmentsymbol $ by the composition of this homomorphism and $ \nu $ from \S\ref{SSbruhattitshomomorphism}.

\subsection{Semidirect product decompositions of Iwahori-Weyl groups} \label{SSsemidirectproductdecomps}

The alcove $ \alcovesymbol $ is contained within a unique alcove $ C $ in the Bruhat-Tits building for $ G(L) $, and this alcove $ C $ is necessarily $ \frob $-stable. Taking $ \frob $-fixed points of the external semidirect product provided by Lemma 14 of the Appendix to \cite{PRH} (relative to the alcove $ C $) yields the external semidirect product
\begin{equation} \label{Eiwahorimatsumotosemidirect}
\widetilde{W} \cong W_{\aff} \rtimes \Omega_G
\end{equation}
The surjection $ \widetilde{W} \twoheadrightarrow \Omega_G $ here is defined by fixing arbitrary representatives $ g_w \in N_G(S)(F) $ for each $ w \in \widetilde{W} $ and defining $ w \mapsto \kappa_G ( g_w ) $ (see Lemma 14 of the Appendix to \cite{PRH} and \S2.3 of \cite{HRo} for more details). It is worth emphasizing here that $ W_{\aff} $ is defined as merely the $ \frob $-fixed subgroup of the affine Weyl group occurring in Lemma 14 of the Appendix to \cite{PRH}, although it is verified in \S\ref{SStrueaffineweylgroup} that $ W_{\aff} $ is, in fact, the affine Weyl group of $ \scaledrootsystemsymbol $.

Assuming in advance that $ W_{\aff} = W_{\aff} ( \scaledrootsystemsymbol ) $, denote by $ \ell : W_{\aff} \rightarrow \N $ the usual length function of the Coxeter group $ ( W_{\aff}, \Delta_{\aff} ) $. Similarly, denote by $ \preceq $ the usual Bruhat-Chevalley partial order on $ W_{\aff} $ relative to $ \Delta_{\aff} $ (as usual, ``$ w_1 \prec w_2 $'' simply means ``$ w_1 \preceq w_2 $ and $ w_1 \neq w_2 $''). Extend $ \ell $ to $ \widetilde{W} $ by inflating along the projection $ \widetilde{W} \rightarrow W_{\aff} $. Extend the Bruhat-Chevalley order $ \preceq $ to $ \widetilde{W} $ by defining $ ( w_1, \tau_1 ) \preceq ( w_2, \tau_2 ) $ if and only if $ w_1 \preceq w_2 $ in $ W_{\aff} $ and $ \tau_1 = \tau_2 $ in $ \Omega_G $.

We frequently use the fact that $ I u I v I = I u v I $ for any $ u, v \in \widetilde{W} $ satisfying $ \ell ( u v ) = \ell ( u ) + \ell ( v ) $, which holds because of 5.2.12 of \cite{BT2}. This fact is referred to as the \emph{Tits System Axiom}.

Define
\begin{equation*}
\Omega_G^{\prime} = \image ( \Omega_G \stackrel{(\ref{Eiwahorimatsumotosemidirect})}{\hookrightarrow} \widetilde{W} ).
\end{equation*}
Necessarily, $ \Omega_G^{\prime} $ consists of elements that stabilize the base alcove $ \alcovesymbol $ (see Lemma 14 of the Appendix to \cite{PRH} and 5.1.28 / 5.1.14 of \cite{BT2}). For any $ w \in \widetilde{W} $, denote by $ \Omega_G ( w ) $ the projection of $ w $ onto $ \Omega_G^{\prime} $. A simple but important fact is that this projection onto $ \Omega_G^{\prime} $ is constant on conjugacy classes since $ \Omega_G $ is abelian.

Another external semidirect product is provided by Lemma 3.0.1(III) of \cite{HRo}:
\begin{equation*}
\widetilde{W} \cong \Omega_M \rtimes \finiteweylgroup
\end{equation*}
Recall from Lemma 3.0.1(III) of \cite{HRo} that, since $ M $ is anisotropic-mod-center, the apartment for $ ( M, A ) $ has only one alcove and therefore the surjection (via the analogue of (\ref{Eiwahorimatsumotosemidirect}) for $ M $) of $ N_M(S)(F) / T(F)_1 $ (which is the Iwahori-Weyl group of $ ( M, A ) $) onto $ \Omega_M $ is actually an isomorphism. The injection $ \Omega_M \hookrightarrow \widetilde{W} $ in the above semidirect product is the composition $ \Omega_M \directedisom N_M(S)(F) / T(F)_1 \subset \widetilde{W} $. Note that the restriction of the projection $ \widetilde{W} \rightarrow \Omega_M $ to the subgroup $ M(F) / M(F)_1 $ is simply $ \kappa_M $.

If $ \lambda \in \Omega_M $ then the symbol $ t_{\lambda} $ is sometimes used to distinguish $ \lambda $ from its image in $ \widetilde{W} $ or its transformation of $ \apartmentsymbol $. Accordingly, $ w ( \lambda ) $ and $ w \circ t_{\lambda} \circ w^{-1} $ have the same meaning for all $ w \in \finiteweylgroup $.

\subsection{Iwahori-Hecke algebras} \label{SSdefsofheckealgebras}

Denote by $ dG_I $ the Haar measure on $ G(F) $ for which $ dG_I(I)=1 $ and by $ \mathcal{H} ( G ; I ) $ the $ \C $-vector space of compactly-supported functions $ f : G(F) \rightarrow \C $ such that $ f ( \alpha g \beta ) = f ( g ) $ for all $ g \in G(F) $ and all $ \alpha, \beta \in I $. Make $ \mathcal{H} ( G ; I ) $ a $ \C $-algebra using the Haar measure $ dG_I $ to define convolution $ * $.

By Proposition 8 / Remark 9 of the Appendix to \cite{PRH}, the characteristic functions
\begin{equation*}
\iwahoriheckebasissymbol_w \defeq 1_{I w I} : G(F) \rightarrow \C
\end{equation*}
are a $ \C $-linear basis for the vector space $ \mathcal{H} ( G ; I ) $. If $ h \in \mathcal{H} ( G ; I ) $ and $ h = \sum_i c_i \iwahoriheckebasissymbol_{w_i} $ ($ c_i \in \C $) then $ w_j $ is said to \emph{support} $ h $ iff $ c_j \neq 0 $.

For $ w \in \widetilde{W} $, denote by $ [ I w I : I ] $ the number of distinct right cosets $ x I $ whose union is $ I w I $ (here $ w $ and $ x $ actually refer to representatives in $ N_G(S)(F) $) and define $ \param : \widetilde{W} \rightarrow \N $ by
\begin{equation*}
\param ( w ) \defeq [ I w I : I ]
\end{equation*}
Assuming in advance that $ W_{\aff} $ is the affine Weyl group of $ \scaledrootsystemsymbol $, which is proved in \S\ref{SStrueaffineweylgroup}, the Tits System Axiom implies that if $ w \in W_{\aff} $ and $ w = s_1 s_2 \cdots s_r $ is a reduced expression from $ \Delta_{\aff} $ then $ \param ( w ) = \param ( s_1 ) \param ( s_2 ) \cdots \param ( s_r ) $. If $ \tau \in \Omega_G $ then $ \tau $ (or rather its representative in $ N_G(S)(F) $) normalizes $ I $ and therefore $ \param ( \tau ) = 1 $, so in fact $ \param $ factors through $ W_{\aff} $.

\section{Generalities on Iwahori-Weyl groups} \label{Sgeneralitiesoniwahoriweylgroups}

\subsection{An alternate quotient for the Iwahori-Weyl group}

\begin{center}
\emph{Recall from \S\ref{SSparahoricsandiwahoriweylgroups} that the Iwahori-Weyl group $ \widetilde{W} $ is, a priori, the quotient $ N_G(S)(F) / T(F)_1 $. The main purpose of this subsection is to show that $ \widetilde{W} $ is also the quotient $ N_G(A)(F) / M(F)_1 $.}
\end{center}

The inclusion $ N_G(S)(F) \subset N_G(A)(F) $ yields a natural group homomorphism
\begin{equation*}
i : \widetilde{W} \longrightarrow N_G(A)(F) / M(F)_1
\end{equation*}
In fact, this map identifies the two groups:

\begin{lemma} \label{Liwahoriweylasquotient}
If $ \phi $ and $ \psi $ denote the group homomorphisms in the short-exact-sequence $ 0 \rightarrow \Omega_M \rightarrow \widetilde{W} \rightarrow \finiteweylgroup \rightarrow 1 $ from \S\ref{SSparahoricsandiwahoriweylgroups} then the diagram
\begin{equation*}
\begin{CD}
0 @>>> \Omega_M @> \phi >> \widetilde{W} @> \psi >> \finiteweylgroup @>>> 1 \\
@. @A \kappa_M AA @VV i V @A \can AA @. \\
1 @>>> \frac{M(F)}{M(F)_1} @> \inc >> \frac{N_G(A)(F)}{M(F)_1} @> \can >> \frac{N_G(A)(F)}{M(F)} @>>> 1
\end{CD}
\end{equation*}
commutes and $ i $ is an isomorphism.
\end{lemma}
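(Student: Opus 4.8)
The plan is to verify that the bottom row is exact, check that the diagram commutes, and then invoke the five lemma (or a small diagram chase) to conclude that $i$ is an isomorphism. First I would observe that the bottom row is tautologically exact: the kernel of the quotient map $\can : N_G(A)(F)/M(F)_1 \twoheadrightarrow N_G(A)(F)/M(F)$ is exactly $M(F)/M(F)_1$, which is the image of the inclusion $\inc$, and surjectivity of $\can$ on the right is clear. That the top row is exact is precisely the content of the semidirect product decomposition $\widetilde{W} \cong \Omega_M \rtimes \finiteweylgroup$ from \S\ref{SSsemidirectproductdecomps}, with $\phi$ the given injection $\Omega_M \hookrightarrow \widetilde{W}$ and $\psi$ the projection; the identification $\finiteweylgroup = N_G(A)(F)/M(F)$ is the definition of the relative Weyl group.

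Next I would check the two squares commute. The right-hand square commutes essentially by construction: the projection $\psi : \widetilde{W} \to \finiteweylgroup$ is induced by $N_G(S)(F) \subset N_G(A)(F) \to N_G(A)(F)/M(F)$, so $\psi = \can \circ (\text{natural map to }N_G(A)(F)/M(F))$, and this is compatible with $i$ and the bottom $\can$. For the left-hand square, I would use the remark at the end of \S\ref{SSsemidirectproductdecomps}: the restriction of the projection $\widetilde{W} \to \Omega_M$ to $M(F)/M(F)_1$ is exactly $\kappa_M$, and dually the injection $\phi : \Omega_M \hookrightarrow \widetilde{W}$ is compatible with the inclusion $M(F)/M(F)_1 \hookrightarrow N_G(A)(F)/M(F)_1$ via $\kappa_M$; since $\kappa_M : M(F)/M(F)_1 \directedisom \Omega_M$ is an isomorphism, the square $\phi \circ \kappa_M = i \circ \inc$ commutes. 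One technical point to be careful about here is that $i$ is well-defined at all, i.e. that $T(F)_1 \subset M(F)_1$, which is already recorded in \S\ref{SSparahoricsandiwahoriweylgroups} by functoriality of $\kappa_\bullet$.

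With both rows exact and both squares commutative, the conclusion that $i$ is an isomorphism follows from the five lemma, using that the left vertical map $\kappa_M$ is an isomorphism (again the remark of \S\ref{SSsemidirectproductdecomps}, since $M$ is anisotropic-mod-center) and the right vertical map $\can : N_G(A)(F)/M(F) \to \finiteweylgroup$ is literally the identity. Alternatively one can argue directly: $i$ is injective because an element of $N_G(S)(F)$ lying in $M(F)_1$ and trivial mod $T(F)_1$ forces, via $\kappa_M$ being injective on $M(F)/M(F)_1$ and the relation $T(F)_1 = N_G(S)(F)\cap M(F)^1 \cap \ldots$, that it already lies in $T(F)_1$; and $i$ is surjective because, given $n \in N_G(A)(F)$, one first adjusts by an element of $M(F)$ to land in $N_G(S)(F)$ — possible since $N_G(S)(F)$ surjects onto $\finiteweylgroup$ — and then adjusts by an element of $M(F)^1$ (which one can take to have a preimage controlled by $\kappa_M$) to correct the $\Omega_M$-component. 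I expect the only mild obstacle to be bookkeeping around the three nested subgroups $T(F)_1 \subset M(F)_1 \subset M(F)^1$ and confirming the compatibilities of the various Kottwitz maps, but all the needed statements are already assembled in \S\ref{SSkottwitzhoms}--\S\ref{SSsemidirectproductdecomps}, so the five lemma route should be clean.
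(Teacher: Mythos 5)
Your proposal is correct and follows essentially the same route as the paper: verify that both rows are exact, check commutativity of the two squares (the left one via compatibility of $\kappa_M$ with the injection $\phi : \Omega_M \hookrightarrow \widetilde{W}$, the right one via the fact that $\psi$ is induced by the inclusion $N_G(S)(F) \subset N_G(A)(F)$ followed by the quotient), and then invoke the Five Lemma. One small slip: you wrote the left square as ``$\phi \circ \kappa_M = i \circ \inc$,'' which does not typecheck ($\phi \circ \kappa_M$ lands in $\widetilde{W}$ while $i \circ \inc$ is not even composable); what you mean is $i \circ \phi \circ \kappa_M = \inc$, i.e.\ $i \circ \phi = \inc \circ \kappa_M^{-1}$, which is exactly what the paper checks.
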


\begin{proof}
Commutativity of the left square follows from the description in \S\ref{SSsemidirectproductdecomps} of the semidirect product: for any $ \lambda \in \Omega_M $ there is $ m \in N_M(S)(F) $ such that $ \kappa_M ( m ) = \lambda $ and by definition $ i ( \phi ( \lambda ) ) = m M(F)_1 $, which visibly agrees with $ \inc \circ \kappa_M^{-1} $. The right square commutes because, by the statement and proof of Lemma 6.1.2 of \cite{HRo} and from the discussion in \S6.2 of \cite{HRo}, the map $ \psi : \widetilde{W} \rightarrow \finiteweylgroup $ is precisely the homomorphism $ N_G(S)(F) / T(F)_1 \rightarrow N_G(A)(F) / M(F) $ induced by inclusions. That $ i $ is an isomorphism follows from the ``Five Lemma''.
\end{proof}

\begin{remark}
The identification yielded by Lemma \ref{Liwahoriweylasquotient} was proved independently by Timo Richarz \cite{richarz}.
\end{remark}

\begin{notation}
From now on, suppress $ i $ and identify $ \widetilde{W} = N_G(A)(F) / M(F)_1 $. It is safe to do this since $ i $ is induced by the inclusion $ N_G(S)(F) \subset N_G(A)(F) $.
\end{notation}

\subsection{The affine Weyl group inside the Iwahori-Weyl group} \label{SStrueaffineweylgroup}

\begin{center}
\emph{Recall from \S\ref{SSparahoricsandiwahoriweylgroups} that, despite the name, $ W_{\aff} $ is merely a certain $ \frob $-fixed subgroup of the Iwahori-Weyl group of $ ( G_L, S ) $. In this subsection, it is verified that $ W_{\aff} $ really is the affine Weyl group of $ ( G, A ) $, i.e. the group of transformations of $ \apartmentsymbol $ generated by all reflections across the nullspaces of $ \Phi_{\aff} $.}
\end{center}

By the main result of the Appendix to \cite{PRH} and Lemma 8.0.1 / Proposition 11.1.4 of \cite{HRo} the following logic holds:
\begin{equation*}
\left\{ \Omega_G \text{ torsion-free} \right\} \stackrel{\S\ref{SSkottwitzcompatwithbruhattits}}{\Longleftrightarrow} \left\{ G(F)^1 = G(F)_1 \right\} \stackrel{\text{8.0.1}}{\Longrightarrow} \left\{ \widetilde{K} = K \right\} \stackrel{\text{11.1.4}}{\Longleftrightarrow} \left\{ \Omega_M \text{ torsion-free} \right\}
\end{equation*}

\begin{remark}
The previous logic is also valid for intermediate parahorics $ I \subset J \subset K $ due to Lemma \ref{Lgeneralizedtorsionparam} below.
\end{remark}

This suggests the following lemma:
\begin{lemma} \label{LMtorsioninsideGtorsion}
$ M(F) \subset G(F) $ induces an injection $ ( \Omega_M )_{\tor} \hookrightarrow ( \Omega_G )_{\tor} $ via $ \kappa_M $ and $ \kappa_G $.
\end{lemma}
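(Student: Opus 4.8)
### Proof proposal for Lemma \ref{LMtorsioninsideGtorsion}

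The plan is to reduce the claim to the two semidirect-product descriptions of $ \widetilde{W} $ recorded in \S\ref{SSsemidirectproductdecomps}, namely $ \widetilde{W} \cong W_{\aff} \rtimes \Omega_G $ and $ \widetilde{W} \cong \Omega_M \rtimes \finiteweylgroup $, together with the fact from Lemma \ref{Liwahoriweylasquotient} that $ \widetilde{W} = N_G(A)(F)/M(F)_1 $ and that the restriction of the projection $ \widetilde{W} \rightarrow \Omega_M $ to $ M(F)/M(F)_1 $ is $ \kappa_M $. First I would observe that the functoriality of $ * \mapsto \kappa_* $ gives a commutative square relating $ \kappa_M : M(F) \rightarrow \Omega_M $ and $ \kappa_G : G(F) \rightarrow \Omega_G $ via the inclusion $ M(F) \subset G(F) $ on one side and the natural map $ \Omega_M \rightarrow \Omega_G $ induced on centers of dual groups on the other. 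So the real content is to show that this induced map $ \Omega_M \rightarrow \Omega_G $ is injective on torsion.

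The key step is to identify the composite $ \Omega_M \hookrightarrow \widetilde{W} \twoheadrightarrow \Omega_G $ (inclusion from the $ \Omega_M \rtimes \finiteweylgroup $ decomposition, then projection from the $ W_{\aff} \rtimes \Omega_G $ decomposition) with the functorial map $ \Omega_M \rightarrow \Omega_G $ coming from $ \kappa_M, \kappa_G $. Indeed, for $ \lambda \in \Omega_M $ pick $ m \in N_M(S)(F) $ with $ \kappa_M(m) = \lambda $; its image in $ \widetilde{W} $ is the chosen representative of $ \phi(\lambda) $, and by the definition of the surjection $ \widetilde{W} \twoheadrightarrow \Omega_G $ in \S\ref{SSsemidirectproductdecomps} (fix a representative $ g_w \in N_G(S)(F) $ and send $ w \mapsto \kappa_G(g_w) $), this image is sent to $ \kappa_G(m) $, which by functoriality of $ \kappa $ is exactly the image of $ \lambda $ under $ \Omega_M \rightarrow \Omega_G $. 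Since $ \Omega_G $ is abelian, the projection $ \widetilde{W} \twoheadrightarrow \Omega_G $ factors through conjugacy and is therefore trivial on $ W_{\aff} $; hence its kernel is $ W_{\aff} $.

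Now restrict to torsion: if $ \lambda \in (\Omega_M)_{\tor} $ maps to $ 0 $ in $ \Omega_G $, then $ \phi(\lambda) \in W_{\aff} $. On the other hand $ \phi(\lambda) $, being in the image of the abelian torsion group $ (\Omega_M)_{\tor} $, is a torsion element of $ \widetilde{W} $ lying in the \emph{translation} part $ \Omega_M $ of the decomposition $ \widetilde{W} = \Omega_M \rtimes \finiteweylgroup $; but $ W_{\aff} \cap \phi(\Omega_M) $ acts on $ \apartmentsymbol $ by translations and a translation of finite order in $ W_{\aff} \subset \mathrm{Aff}(\apartmentsymbol) $ is trivial (a finite-order element of a group acting on $ \apartmentsymbol^{\semisimple} $ that is a pure translation must be the identity, since $ \apartmentsymbol^{\semisimple} $ is torsion-free as an abelian group). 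Thus $ \phi(\lambda) = 1 $, and since $ \phi $ is injective, $ \lambda = 0 $. This gives injectivity of $ (\Omega_M)_{\tor} \hookrightarrow (\Omega_G)_{\tor} $; that the image lands in $ (\Omega_G)_{\tor} $ is automatic since group homomorphisms preserve torsion.

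I expect the main obstacle to be the bookkeeping in the key step: carefully matching the inclusion $ \Omega_M \hookrightarrow \widetilde{W} $ from the $ M $-version of the semidirect product with the surjection $ \widetilde{W} \twoheadrightarrow \Omega_G $, and confirming these are compatible with $ \kappa_M $ and $ \kappa_G $ as claimed — this relies on tracing through the definitions in \S\ref{SSsemidirectproductdecomps} and the compatibility of the Kottwitz homomorphism with $ M \subset G $. Once that identification is in hand, the torsion argument via translations acting on the (torsion-free) apartment is routine.
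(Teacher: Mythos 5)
Your argument has a circularity problem that you will want to resolve. At the step ``Thus $\phi(\lambda) = 1$'', you pass from ``$\phi(\lambda)$ acts trivially on $\apartmentsymbol$'' (which you have correctly shown: a finite-order translation of a real vector space is the identity transformation) to ``$\phi(\lambda)$ is the identity element of $W_{\aff} \subset \widetilde{W}$''. That inference requires the action of $W_{\aff}$ on $\apartmentsymbol$ to be faithful, i.e.\ $W_{\aff} \cap \kernel(\iota) = \{1\}$, where $\iota : \widetilde{W} \twoheadrightarrow W^{\prime}$ is the map through which the apartment action factors. But at this point in the paper $W_{\aff}$ is defined only as a certain $\frob$-fixed subgroup of $\widetilde{W}$ (\S\ref{SSsemidirectproductdecomps}), not as a reflection group of $\apartmentsymbol$, and the faithfulness $W_{\aff} \cap \kernel(\iota) = \{1\}$ is exactly what Proposition \ref{Paffineweylgroup} establishes --- whose proof invokes Lemma \ref{LMtorsioninsideGtorsion}. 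So, read inside the paper's logical order, you are implicitly appealing to the lemma you are trying to prove when you write $W_{\aff} \subset \myaut(\apartmentsymbol)$.

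The paper's own proof sidesteps the apartment action entirely and runs through parahoric subgroups: Proposition 11.1.4 of \cite{HRo} gives $M(F)^1 \subset \widetilde{K}$ together with a bijection $M(F)^1/M(F)_1 \directedisom \widetilde{K}/K$; Lemma 8.0.1 of \cite{HRo} and the main result of the Appendix to \cite{PRH} give $\widetilde{K} \cap G(F)_1 = K$, hence an injection $\widetilde{K}/K \hookrightarrow G(F)^1/G(F)_1$; and \S\ref{SSkottwitzcompatwithbruhattits} identifies $M(F)^1/M(F)_1 \cong (\Omega_M)_{\tor}$ and $G(F)^1/G(F)_1 \cong (\Omega_G)_{\tor}$ via $\kappa_M$, $\kappa_G$. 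Composing yields the injection, and naturality of $\kappa$ shows it is the map induced by $M(F) \subset G(F)$. Your identification of $\Omega_M \hookrightarrow \widetilde{W} \twoheadrightarrow \Omega_G$ with the functorial map is fine and could be useful elsewhere, but the torsion-translation argument needs to be replaced by something that does not presuppose the content of Proposition \ref{Paffineweylgroup}.
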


\begin{proof}
By Proposition 11.1.4 of \cite{HRo}, $ M(F)^1 \subset \widetilde{K} $ and induces a bijection $ M(F)^1 / M(F)_1 \directedisom \widetilde{K} / K $. On the other hand, Lemma 8.0.1 of \cite{HRo} and the main result of the Appendix to \cite{PRH} imply that $ \widetilde{K} \cap G(F)_1 = K $, so $ \widetilde{K} \subset G(F)^1 $ induces an injection $ \widetilde{K} / K \hookrightarrow G(F)^1 / G(F)_1 $. By \S\ref{SSkottwitzcompatwithbruhattits}, $ \kappa_M $ and $ \kappa_G $ identify $ M(F)^1 / M(F)_1 \cong ( \Omega_M )_{\tor} $ and $ G(F)^1 / G(F)_1 \cong ( \Omega_G )_{\tor} $, so we have an injection $ ( \Omega_M )_{\tor} \hookrightarrow ( \Omega_G )_{\tor} $. Since the composite $ M(F)^1 / M(F)_1 \hookrightarrow G(F)^1 / G(F)_1 $ of the two maps above agrees with the inclusion $ M(F) \subset G(F) $, functoriality of $ * \mapsto \kappa_{*} $ means that the injection $ ( \Omega_M )_{\tor} \hookrightarrow ( \Omega_G )_{\tor} $ is indeed induced by the inclusion.
\end{proof}

\begin{remark}
Lemma \ref{LMtorsioninsideGtorsion} appeared earlier in the Appendix of \cite{haines2}.
\end{remark}

Recall from \S\ref{SSbruhattitshomomorphism} the group $ W^{\prime} \defeq N_G(A)(F) / M(F)^1 $. The composition $ \widetilde{W} = N_G(A)(F) / M(F)_1 \rightarrow N_G(A)(F) / M(F)^1 $ is a surjective group homomorphism
\begin{equation*}
\iota : \widetilde{W} \twoheadrightarrow W^{\prime}
\end{equation*}
It is trivial from \S\ref{SSparahoricsandiwahoriweylgroups} that the action of $ \widetilde{W} $ on $ \apartmentsymbol $ factors through $ W^{\prime} $.

Denote by $ \Omega^{\prime} \subset W^{\prime} $ the subgroup of all elements which stabilize the base alcove $ \alcovesymbol $. By \S1.7 of \cite{tits}, $ W_{\aff} ( \scaledrootsystemsymbol ) $ is considered as a normal subgroup of $ W^{\prime} $ and it is completely formal that $ \Omega^{\prime} \directedisom W^{\prime} / W_{\aff} ( \scaledrootsystemsymbol ) $. The main claim of this subsection is:
\begin{prop} \label{Paffineweylgroup}
$ \iota ( \Omega_G^{\prime} ) = \Omega^{\prime} $ and the restriction of $ \iota $ to $ W_{\aff} $ is an isomorphism $ W_{\aff} \directedisom W_{\aff} ( \scaledrootsystemsymbol ) $.
\end{prop}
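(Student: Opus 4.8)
The plan is to analyze the surjection $\iota : \widetilde{W} \twoheadrightarrow W'$ through the two available semidirect-product decompositions of $\widetilde{W}$, identifying the kernel of $\iota$ and tracking where $W_{\aff}$ and $\Omega_G'$ go. First I would compute $\kernel(\iota)$: since $\widetilde{W} = N_G(A)(F)/M(F)_1$ and $W' = N_G(A)(F)/M(F)^1$, the kernel is $M(F)^1/M(F)_1$, which by \S\ref{SSkottwitzcompatwithbruhattits} is identified by $\kappa_M$ with $(\Omega_M)_{\tor}$. By Lemma \ref{LMtorsioninsideGtorsion} this sits inside $(\Omega_G)_{\tor} \subset \Omega_G'$, and since $\Omega_G'$ consists of alcove-stabilizing elements (so $\Omega_G' \cap W_{\aff} = 1$ inside the decomposition $\widetilde{W} = W_{\aff} \rtimes \Omega_G'$), the kernel $\kernel(\iota)$ meets $W_{\aff}$ trivially. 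This immediately gives that $\iota$ restricted to $W_{\aff}$ is \emph{injective}.

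Next I would establish that $\iota(W_{\aff}) = W_{\aff}(\scaledrootsystemsymbol)$ and $\iota(\Omega_G') = \Omega'$ simultaneously. The action of $\widetilde{W}$ on $\apartmentsymbol$ factors through $W'$, and under this action $W_{\aff}$ (the $\frob$-fixed subgroup of the Iwahori-Weyl group of $(G_L, S)$) acts by the affine transformations generated by reflections across the walls of $\Phi_{\aff}$; but that group of transformations is exactly $W_{\aff}(\scaledrootsystemsymbol)$ by the discussion in \S1.7 of \cite{tits} and the definition of $\scaledrootsystemsymbol$ (the reduced root system whose affine hyperplanes coincide with those of $\Phi_{\aff}$). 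Since $W_{\aff}(\scaledrootsystemsymbol)$ acts \emph{faithfully} on $\apartmentsymbol^{\semisimple}$, the composite $W_{\aff} \to W' \to \myaut(\apartmentsymbol)$ has image $W_{\aff}(\scaledrootsystemsymbol)$; combined with the injectivity of $\iota|_{W_{\aff}}$ from the previous step, this forces $\iota|_{W_{\aff}}$ to land isomorphically onto $W_{\aff}(\scaledrootsystemsymbol) \subset W'$. For the $\Omega_G'$ statement, $\Omega_G'$ stabilizes $\alcovesymbol$, so $\iota(\Omega_G')$ consists of alcove-stabilizing elements of $W'$, i.e. $\iota(\Omega_G') \subseteq \Omega'$; the reverse inclusion follows by a counting/surjectivity argument: $\iota$ is surjective and $\widetilde{W} = W_{\aff} \cdot \Omega_G'$, so $W' = \iota(\widetilde{W}) = \iota(W_{\aff}) \cdot \iota(\Omega_G') = W_{\aff}(\scaledrootsystemsymbol) \cdot \iota(\Omega_G')$, and since $W' / W_{\aff}(\scaledrootsystemsymbol) \directedisom \Omega'$ formally, every coset of $W_{\aff}(\scaledrootsystemsymbol)$ in $W'$ is hit by $\iota(\Omega_G')$, giving $\iota(\Omega_G') = \Omega'$.

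The main obstacle I anticipate is the bookkeeping in the second step — specifically, being careful that the group of affine transformations of $\apartmentsymbol$ that $W_{\aff}$ induces is genuinely \emph{all} of $W_{\aff}(\scaledrootsystemsymbol)$ and not a proper subgroup. This requires knowing that the $\frob$-fixed points of the Iwahori-Weyl group of $(G_L,S)$ act on $\apartmentsymbol = \apartmentsymbol(G,A)$ with the full affine Weyl group of $\Phi_{\aff}$ as image, which is precisely the content of the relationship between $\Phi_{\aff}$ and the local Dynkin diagram in Bruhat-Tits theory and the statement (cited from \cite{PRH} and \cite{tits}) that $W_{\aff}(\scaledrootsystemsymbol)$ is the affine Weyl group of $(G,A)$; one must also confirm that $\scaledrootsystemsymbol$ (not $\relativerootsystem$) is the correct reduced root system here, which is exactly how $\scaledrootsystemsymbol$ was defined in \S\ref{SSparahoricsandiwahoriweylgroups}. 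Once the faithful action of $W_{\aff}(\scaledrootsystemsymbol)$ on $\apartmentsymbol^{\semisimple}$ is invoked, injectivity plus surjectivity-onto-the-transformation-group pins down the isomorphism with no further effort, and the $\Omega'$ statement is then a purely formal consequence.
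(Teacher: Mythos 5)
Your proof shares the paper's injectivity argument almost verbatim — identify $\kernel(\iota) = M(F)^1/M(F)_1 \cong (\Omega_M)_{\tor}$, push it into $(\Omega_G)_{\tor} \subset \Omega_G'$ via Lemma \ref{LMtorsioninsideGtorsion}, and intersect with $W_{\aff}$ — but then diverges sharply at the surjectivity step. The paper proves only the inclusion $\iota(W_{\aff}) \subset W_{\aff}(\scaledrootsystemsymbol)$ (via a diagram chase on the two semidirect decompositions), proves $\iota(\Omega_G') \subset \Omega'$, and then obtains surjectivity of both restrictions purely formally: since $\iota$ is surjective and factors through the decompositions $W_{\aff} \rtimes \Omega_G'$ and $W_{\aff}(\scaledrootsystemsymbol) \rtimes \Omega'$, the restriction to each factor must be onto. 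You instead try to establish $\iota(W_{\aff}) = W_{\aff}(\scaledrootsystemsymbol)$ head-on, by asserting that the $\frob$-fixed subgroup of the affine Weyl group over $L$ acts on $\apartmentsymbol(G,A)$ with image exactly the reflection group of $\Phi_{\aff}$, and then deduce the $\Omega'$-statement afterward.

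The problem is that this asserted fact is precisely the content of the proposition: \S\ref{SSsemidirectproductdecomps} emphasizes that $W_{\aff}$ is, \emph{a priori}, ``merely the $\frob$-fixed subgroup of the affine Weyl group occurring in Lemma 14 of the Appendix to \cite{PRH},'' and the whole point of \S\ref{SStrueaffineweylgroup} is to verify that this abstractly defined subgroup coincides with the affine Weyl group generated by reflections across the walls of $\Phi_{\aff}$. Neither \cite{PRH} nor \cite{tits} states the identity ``$(W_{\aff}^L)^{\frob}$ surjects onto the affine Weyl group of $\Phi_{\aff}$'' in the form you need — Tits \S1.7 defines $W_{\aff}(\scaledrootsystemsymbol)$ directly over $F$ without reference to the $L$-level object, and the PRH appendix sets up $W_{\aff}$ over $L$ as a Coxeter group but does not identify its $\frob$-fixed points with a reflection group on the $F$-apartment. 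You do correctly flag this as ``the main obstacle,'' but you resolve it by citation rather than proof, which is circular relative to what the proposition is claiming. The paper's route is chosen exactly to sidestep this: it only needs the one-sided inclusion $\iota(W_{\aff}) \subset W_{\aff}(\scaledrootsystemsymbol)$ (which is weaker and amenable to a diagram chase), and then the surjectivity of $\iota$ itself — already known from the quotient description of Lemma \ref{Liwahoriweylasquotient} — forces equality on both factors at once, with no Bruhat-Tits descent input.

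If you want to salvage your line of argument, the honest version would prove the descent statement you invoke, but at that point you would have reproved the proposition a second time; it is cleaner to adopt the paper's trick. Your $\Omega'$-argument is sound once $\iota(W_{\aff}) = W_{\aff}(\scaledrootsystemsymbol)$ is in hand, and is essentially the same counting as in the paper.
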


\begin{proof}
That $ \iota ( \Omega_G^{\prime} ) \subset \Omega^{\prime} $ is immediate since the actions of $ \widetilde{W} $ and $ W^{\prime} $ on $ \apartmentsymbol $ are compatible via $ \iota $. A simple ``Five Lemma''-style diagram chase shows that $ \iota ( W_{\aff} ) \subset W_{\aff} ( \scaledrootsystemsymbol ) $. We now verify injectivity of $ \iota \vert_{W_{\aff}} $. Since Lemma \ref{Liwahoriweylasquotient} says that $ M(F) \subset N_G(A)(F) $ is compatible via $ \kappa_M $ with $ \Omega_M \hookrightarrow \widetilde{W} $, we may identify $ \kernel ( \iota ) = ( \Omega_M )_{\tor} $. Lemma \ref{LMtorsioninsideGtorsion} and the fact that $ W_{\aff} \cap \Omega_G^{\prime} = \{ 1 \} $ then imply that $ W_{\aff} \cap \kernel ( \iota ) = \{ 1 \} $. That the restrictions $ W_{\aff} \rightarrow W_{\aff} ( \scaledrootsystemsymbol ) $ and $ \Omega_G^{\prime} \rightarrow \Omega^{\prime} $ of $ \iota $ are surjective is immediate due to the fact that $ \iota $ itself is surjective and, by what is already proven above, $ \iota $ is the product of its restrictions $ W_{\aff} \rightarrow W_{\aff} ( \scaledrootsystemsymbol ) $ and $ \Omega_G^{\prime} \rightarrow \Omega^{\prime} $.
\end{proof}

\begin{notation}
From now on, suppress the isomorphism $ \iota \vert_{W_{\aff}} : W_{\aff} \directedisom W_{\aff} ( \scaledrootsystemsymbol ) $ and identify $ W_{\aff} = W_{\aff} ( \scaledrootsystemsymbol ) $. Accordingly, $ \Delta_{\aff} $ will be considered a Coxeter generating set for $ W_{\aff} $ and $ \ell $ will denote the corresponding length function.
\end{notation}

\subsection{A genuine root datum using $ \Omega_M $} \label{SSgenuinerootdatum}

\begin{center}
\emph{The purpose of this subsection is to define a reduced root datum $ \Psi_M $ whose extended affine Hecke algebra is very nearly the Iwahori-Hecke algebra $ \mathcal{H} ( G, I ) $. This formalism allows the results of \cite{lusztig} to be used to the greatest extent.}
\end{center}

By Proposition \ref{Paffineweylgroup}, $ W_{\aff} $ contains the group of translations by $ \scaledrootsystemsymbol^{\vee} $ and since $ \Omega_M $ is the translation subgroup of $ \widetilde{W} $, it is true that $ \scaledrootsystemsymbol^{\vee} \subset \Omega_M $. Define
\begin{equation*}
\overline{\scaledrootsystemsymbol}^{\vee} \defeq \image ( \scaledrootsystemsymbol^{\vee} \subset \Omega_M \canarrow \overline{\Omega}_M )
\end{equation*}
Notice that $ \scaledrootsystemsymbol^{\vee} $ and $ \overline{\scaledrootsystemsymbol}^{\vee} $ are identical root systems.

Recall from \S\ref{SSkottwitzcompatwithbruhattits} that there is a natural map $ \cochargroup ( A ) \rightarrow \Omega_M $ and that applying $ - \otimes_{\Z} \R $ to $ \cochargroup ( A ) \rightarrow \Omega_M \canarrow \overline{\Omega}_M $ yields an $ \R $-linear isomorphism $ \cochargroup ( A ) \otimes_{\Z} \R \directedisom \overline{\Omega}_M \otimes_{\Z} \R $. Via this isomorphism, define $ \overline{\Omega}_M^{\vee} \subset \chargroup ( A ) \otimes_{\Z} \R $ to be the $ \Z $-dual of $ \overline{\Omega}_M \hookrightarrow \cochargroup ( A ) \otimes_{\Z} \R $ with respect to $ \langle - , - \rangle_{\R} $, i.e.
\begin{equation*}
\overline{\Omega}_M^{\vee} \defeq \{ x \in \chargroup ( A ) \otimes_{\Z} \R \suchthat \langle x , \overline{\Omega}_M \rangle_{\R} \subset \Z \}
\end{equation*}
Then $ \overline{\Omega}_M^{\vee} $ is also free-abelian and finite-rank and the restriction
\begin{equation*}
\langle - , - \rangle_M : \overline{\Omega}_M^{\vee} \times \overline{\Omega}_M  \longrightarrow \Z
\end{equation*}
of $ \langle - , - \rangle_{\R} $ is a perfect pairing. Further, if $ \overline{\scaledrootsystemsymbol} $ denotes the dual root system of $ \overline{\scaledrootsystemsymbol}^{\vee} $ then, by construction, $ \overline{\scaledrootsystemsymbol} \subset \overline{\Omega}_M^{\vee} $.

In summary,
\begin{lemma}
The tuple $ ( \overline{\Omega}_M^{\vee}, \overline{\scaledrootsystemsymbol}, \overline{\Omega}_M, \overline{\scaledrootsystemsymbol}^{\vee}, \langle - , - \rangle_M ) $ is a reduced root datum in the traditional sense.
\end{lemma}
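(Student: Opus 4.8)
The plan is to verify the four axioms of a reduced root datum directly from the constructions made in this subsection, with most of the work already packaged into the preceding paragraphs. Recall that a root datum $( X, \Phi, X^{\vee}, \Phi^{\vee} )$ with perfect pairing $\langle -, - \rangle : X \times X^{\vee} \to \Z$ requires: (i) $X$ and $X^{\vee}$ are finitely generated free abelian groups in perfect duality; (ii) $\Phi \subset X$ and $\Phi^{\vee} \subset X^{\vee}$ are finite sets with a bijection $\alpha \leftrightarrow \alpha^{\vee}$ satisfying $\langle \alpha, \alpha^{\vee} \rangle = 2$; (iii) for each $\alpha$, the reflection $s_{\alpha}(x) = x - \langle x, \alpha^{\vee} \rangle \alpha$ preserves $\Phi$, and dually $s_{\alpha^{\vee}}$ preserves $\Phi^{\vee}$; and ``reduced'' means $\Phi$ contains no pair $\alpha, 2\alpha$. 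I would check these for the tuple $( \overline{\Omega}_M^{\vee}, \overline{\scaledrootsystemsymbol}, \overline{\Omega}_M, \overline{\scaledrootsystemsymbol}^{\vee} )$ with pairing $\langle -, - \rangle_M$.

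For axiom (i), the paragraph preceding the lemma already records that $\overline{\Omega}_M$ is free-abelian of finite rank (this was established in \S\ref{SSkottwitzcompatwithbruhattits}, where $\myrank ( \overline{\Omega}_M ) = \dim ( A )$), that $\overline{\Omega}_M^{\vee}$ is its $\Z$-dual inside $\chargroup ( A ) \otimes_{\Z} \R$ and is therefore also free-abelian of finite rank, and that the restricted pairing $\langle -, - \rangle_M$ is perfect — so (i) is simply a citation of that discussion. For (ii) and the reduced condition: by construction $\overline{\scaledrootsystemsymbol}^{\vee}$ is the image of $\scaledrootsystemsymbol^{\vee}$ under $\scaledrootsystemsymbol^{\vee} \subset \Omega_M \canarrow \overline{\Omega}_M$, and the excerpt notes that $\scaledrootsystemsymbol^{\vee}$ and $\overline{\scaledrootsystemsymbol}^{\vee}$ are \emph{identical} root systems (the composite $\scaledrootsystemsymbol^{\vee} \subset \cochargroup(A) \otimes \R \directedisom \overline{\Omega}_M \otimes \R$ is injective on $\scaledrootsystemsymbol^{\vee}$, so no coroots collide and the $\Z$-span structure is unchanged); likewise $\overline{\scaledrootsystemsymbol}$ is \emph{defined} as the dual root system of $\overline{\scaledrootsystemsymbol}^{\vee}$, so the bijection $\alpha \leftrightarrow \alpha^{\vee}$ and the value $\langle \alpha, \alpha^{\vee} \rangle = 2$ come for free from root-system duality, and $\scaledrootsystemsymbol$ being reduced (stated in \S\ref{Snotation}) transfers to $\overline{\scaledrootsystemsymbol}^{\vee}$ and hence to $\overline{\scaledrootsystemsymbol}$.

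The only genuine content is the containments $\overline{\scaledrootsystemsymbol}^{\vee} \subset \overline{\Omega}_M$ and $\overline{\scaledrootsystemsymbol} \subset \overline{\Omega}_M^{\vee}$ together with the reflection-stability axiom (iii). The first containment holds because $\scaledrootsystemsymbol^{\vee} \subset \Omega_M$ (proved just above, using Proposition \ref{Paffineweylgroup}: $W_{\aff} = W_{\aff}(\scaledrootsystemsymbol)$ contains translations by $\scaledrootsystemsymbol^{\vee}$, and $\Omega_M$ is the translation subgroup of $\widetilde{W}$), hence its image $\overline{\scaledrootsystemsymbol}^{\vee}$ lies in $\overline{\Omega}_M$. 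The second containment, $\overline{\scaledrootsystemsymbol} \subset \overline{\Omega}_M^{\vee}$, is the assertion already made ``by construction'' in the text: each $\overline{\alpha} \in \overline{\scaledrootsystemsymbol}$ pairs integrally against all of $\overline{\Omega}_M$, which one sees from the compatibility of $\langle -, - \rangle_M$ with $\langle -, - \rangle_{\R}$ and the fact that $\overline{\Omega}_M^{\vee}$ is defined as exactly the set of such integral-pairing elements. For (iii): the Weyl group $\finiteweylgroup(\overline{\scaledrootsystemsymbol}) = \finiteweylgroup(\scaledrootsystemsymbol) = \finiteweylgroup$ acts on $\cochargroup(A) \otimes \R$ preserving $\scaledrootsystemsymbol^{\vee}$, and via the $\R$-linear isomorphism $\cochargroup(A) \otimes \R \directedisom \overline{\Omega}_M \otimes \R$ this action preserves $\overline{\Omega}_M$ (since $\finiteweylgroup$ stabilizes $\Omega_M \subset \widetilde{W}$ by conjugation, compatibly with the semidirect decomposition $\widetilde{W} = \Omega_M \rtimes \finiteweylgroup$) and preserves $\overline{\scaledrootsystemsymbol}^{\vee}$; dualizing gives the stability of $\overline{\Omega}_M^{\vee}$ and $\overline{\scaledrootsystemsymbol}$ under $s_{\overline{\alpha}}$.

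The main obstacle, such as it is, is bookkeeping rather than mathematics: one must be careful that the three identifications in play — $\scaledrootsystemsymbol^{\vee} \cong \overline{\scaledrootsystemsymbol}^{\vee}$ as root systems, $\cochargroup(A) \otimes \R \cong \overline{\Omega}_M \otimes \R$ as vector spaces with $\finiteweylgroup$-action, and the duality $\overline{\Omega}_M^{\vee} = (\overline{\Omega}_M)^{\vee}$ — are mutually compatible, so that the reflection formula $s_{\overline{\alpha}}(x) = x - \langle x, \overline{\alpha}^{\vee} \rangle_M \overline{\alpha}$ really is the one coming from the $\finiteweylgroup$-action and not merely an abstract root-datum reflection. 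Once that compatibility is noted, the lemma follows; I would phrase the proof as a short paragraph citing the free-abelian/finite-rank and perfect-pairing facts from \S\ref{SSkottwitzcompatwithbruhattits}, the containments $\scaledrootsystemsymbol^{\vee} \subset \Omega_M$ and $\overline{\scaledrootsystemsymbol} \subset \overline{\Omega}_M^{\vee}$ from the present subsection, and the $\finiteweylgroup$-equivariance of the isomorphism $\cochargroup(A) \otimes \R \directedisom \overline{\Omega}_M \otimes \R$ for the reflection-stability axioms.
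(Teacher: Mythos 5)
The paper states this lemma with no proof at all — it appears after ``In summary,'' signaling that it is intended to be an immediate consequence of the construction in the preceding paragraphs of \S\ref{SSgenuinerootdatum}. Your axiom-by-axiom verification is exactly the routine check the paper leaves implicit, and it cites the right ingredients (the free-abelian/finite-rank/perfect-pairing facts from \S\ref{SSkottwitzcompatwithbruhattits}, the containments $\scaledrootsystemsymbol^{\vee} \subset \Omega_M$ and $\overline{\scaledrootsystemsymbol} \subset \overline{\Omega}_M^{\vee}$, and $\finiteweylgroup$-equivariance), so it matches the paper's approach.
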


\begin{defn}
The \emph{scaled} (or \emph{\'{e}chelonnage}) \emph{root datum} of $ ( G, A ) $ is the reduced root datum
\begin{equation*}
\Psi_M \defeq ( \overline{\Omega}_M^{\vee}, \overline{\scaledrootsystemsymbol}, \overline{\Omega}_M, \overline{\scaledrootsystemsymbol}^{\vee}, \langle - , - \rangle_M )
\end{equation*}
For compatibility with $ G $, it is intended that $ \overline{\Omega}_M^{\vee} $ play the role of the \emph{character} group in this root datum.
\end{defn}

\begin{notation}
Since $ \overline{\scaledrootsystemsymbol} = \scaledrootsystemsymbol $, abuse notation and identify $ \finiteweylgroup ( \overline{\scaledrootsystemsymbol} ) = \finiteweylgroup $.
\end{notation}

\subsection{Some compatibilities} \label{SSvariouscompatibilities}

\begin{center}
\emph{The purpose of this subsection is merely to record the isomorphisms between and compatibilities among the many different objects in play, and to emphasize that the extended affine Weyl group of the scaled root datum $ \Psi_M $ is just the group $ W^{\prime} $ from \cite{tits}.}
\end{center}

Denote by $ \varphi $ the isomorphism $ \widetilde{W} \directedisom \Omega_M \rtimes \finiteweylgroup $ from \S\ref{SSsemidirectproductdecomps}.
\begin{lemma} \label{Lcompatibilitysquare}
There is a unique isomorphism
\begin{equation*}
\varphi^{\prime} : W^{\prime} \defeq N_G(A)(F) / M(F)^1 \directedisom \overline{\Omega}_M \rtimes \finiteweylgroup \defeq \widetilde{W} ( \Psi_M )
\end{equation*}
such that the square
\begin{equation*}
\begin{CD}
\widetilde{W} @> \iota >> W^{\prime} \\
@V \varphi VV @VV \varphi^{\prime} V \\
\Omega_M \rtimes \finiteweylgroup @>> \can \times \identity > \widetilde{W} ( \Psi_M )
\end{CD}
\end{equation*}
commutes and which restricts to an isomorphism $ M(F) / M(F)^1 \directedisom \overline{\Omega}_M $.
\end{lemma}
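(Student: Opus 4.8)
The plan is to build $\varphi'$ by passing $\varphi$ to quotients, once one checks that $\varphi$ identifies $\kernel(\iota)$ with the kernel of $\can \times \identity$. I would first pin down both kernels. Since $\iota$ is the canonical surjection $N_G(A)(F)/M(F)_1 \twoheadrightarrow N_G(A)(F)/M(F)^1$, its kernel is $M(F)^1/M(F)_1$, which by \S\ref{SSkottwitzcompatwithbruhattits} is carried isomorphically by $\kappa_M$ onto $(\Omega_M)_{\tor}$. On the other side, $\can : \Omega_M \to \overline{\Omega}_M$ has kernel $(\Omega_M)_{\tor}$ by definition of $\overline{\Omega}_M$, and since $\finiteweylgroup$ preserves the characteristic subgroup $(\Omega_M)_{\tor} \subset \Omega_M$, the map $\can \times \identity : \Omega_M \rtimes \finiteweylgroup \to \overline{\Omega}_M \rtimes \finiteweylgroup$ is a well-defined surjection whose kernel is the normal subgroup $(\Omega_M)_{\tor} \subset \Omega_M \subset \Omega_M \rtimes \finiteweylgroup$. (Here the two copies of $\finiteweylgroup$ match because $\finiteweylgroup(\overline{\scaledrootsystemsymbol}) = \finiteweylgroup(\scaledrootsystemsymbol) = \finiteweylgroup$.)

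Next I would verify $\varphi(\kernel(\iota)) = \kernel(\can \times \identity)$. By the explicit description of the semidirect product decomposition recalled in \S\ref{SSsemidirectproductdecomps}, together with Lemma~\ref{Liwahoriweylasquotient} (which, after suppressing $i$, says that the subgroup $M(F)/M(F)_1 \subset \widetilde{W}$ is exactly the translation subgroup $\Omega_M \hookrightarrow \widetilde{W}$ and that $\varphi$ acts on it as $\kappa_M$), the restriction of $\varphi$ to $M(F)/M(F)_1$ is $\kappa_M$ followed by $\Omega_M \hookrightarrow \Omega_M \rtimes \finiteweylgroup$. Combining this with the identification $\kernel(\iota) = M(F)^1/M(F)_1 \cong (\Omega_M)_{\tor}$ from the previous paragraph gives $\varphi(\kernel(\iota)) = (\Omega_M)_{\tor} = \kernel(\can \times \identity)$.

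It then follows formally that there is a unique map $\varphi' : W^{\prime} \to \widetilde{W}(\Psi_M)$ making the square commute --- the one induced on quotients --- and that $\varphi'$ is an isomorphism, since $\varphi$ is an isomorphism carrying one kernel onto the other; uniqueness is forced by surjectivity of $\iota$. For the final assertion, the commuting square shows that $\varphi'$ sends $\iota(M(F)/M(F)_1) = M(F)/M(F)^1$ onto $(\can \times \identity)(\varphi(M(F)/M(F)_1)) = \can(\Omega_M) = \overline{\Omega}_M$, and the resulting isomorphism $M(F)/M(F)^1 \directedisom \overline{\Omega}_M$ is the map induced by $\kappa_M$. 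I do not expect a genuine obstacle here: the only point demanding attention is the bookkeeping in the second paragraph, i.e.\ that under $\varphi$ the translation subgroup of $\widetilde{W}$ is precisely $M(F)/M(F)_1$ with $\varphi$ restricting to $\kappa_M$ there, which is exactly what Lemma~\ref{Liwahoriweylasquotient} and \S\ref{SSsemidirectproductdecomps} were arranged to supply.
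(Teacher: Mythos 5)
Your proposal is correct and takes essentially the same approach as the paper's: you phrase the argument as ``$\varphi$ carries $\kernel(\iota)$ onto $\kernel(\can\times\identity)$'' while the paper phrases it as ``$(\can\times\identity)\circ\varphi$ is constant on fibers of $\iota$,'' but these are the same reduction, and both rest on the same two facts --- that $\varphi$ restricts to $\kappa_M$ on the translation subgroup $M(F)/M(F)_1$, and that $\kappa_M(M(F)^1)=(\Omega_M)_{\tor}=\kernel(\can)$.
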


Recall from \S1.2 of \cite{tits} that subgroup of $ W^{\prime} $ which acts on $ \apartmentsymbol $ by translations is the quotient $ M(F) / M(F)^1 $ and is denoted ``$ \Lambda $'' there.

\begin{proof}
If it were true that $ \widetilde{W} \stackrel{\pr \circ \varphi}{\longrightarrow} \Omega_M \canarrow \overline{\Omega}_M $ and $ \widetilde{W} \stackrel{\pr \circ \varphi}{\longrightarrow} \finiteweylgroup $ were both constant on fibers of $ \iota $ then the desired diagram itself provides the definition of $ \varphi^{\prime} $, and uniqueness is automatic. Constancy of the latter map is obvious since, as remarked in the proof of Lemma \ref{Liwahoriweylasquotient}, the map $ \widetilde{W} \rightarrow \finiteweylgroup $ is the canonical one $ N_G(A)(F) / M(F)_1 \canarrow N_G(A)(F) / M(F) $. For constancy of the former map, suppose $ n_1, n_2 \in N_G(A)(F) $ and $ \iota ( n_1 ) = \iota ( n_2 ) $, i.e. that $ n_1 n_2^{-1} \in M(F)^1 $. Since $ \varphi $ agrees with $ \kappa_M $ on $ M(F) $ (see \S\ref{SSparahoricsandiwahoriweylgroups}) and since $ \kappa_M ( M(F)^1 ) = ( \Omega_M )_{\tor} $, the images of $ n_1 M(F)_1 $ and $ n_2 M(F)_1 $ in $ \overline{\Omega}_M \rtimes \finiteweylgroup $ are indeed the same. As mentioned above, this constancy yields both the isomorphism $ \varphi^{\prime} $, commutativity of the square, and the uniqueness statement. It is automatic from the commutativity of the diagram that $ \varphi^{\prime} $ is surjective. Because $ \varphi $ is injective and agrees with $ \kappa_M $ on $ M(F) / M(F)_1 $, $ \varphi^{-1} ( x, 1 ) \in M(F)^1 / M(F)_1 $ for all $ x \in ( \Omega_M )_{\tor} $ and therefore $ \varphi^{\prime} $ is injective. The remaining facts also follow from the injectivity of $ \varphi $ and the agreement with $ \kappa_M $: $ \varphi^{\prime} $ indeed restricts to $ M(F) / M(F)^1 \rightarrow \overline{\Omega}_M $, injectivity of the restriction is automatic from that of $ \varphi^{\prime} $ itself, and the restriction is surjective because $ \kappa_M $ and $ \Omega_M \canarrow \overline{\Omega}_M $ are both surjective.
\end{proof}

Let $ \varphi^{\prime} $ be as in Lemma \ref{Lcompatibilitysquare}. Define
\begin{align*}
\overline{\Omega} &\defeq \varphi^{\prime} ( \Omega^{\prime} ) \\
W_{\aff} ( \overline{\scaledrootsystemsymbol} ) &\defeq \varphi^{\prime} ( W_{\aff} ( \scaledrootsystemsymbol ) )
\end{align*}
and note that $ \widetilde{W} ( \Psi_M ) $ inherits the internal semidirect product decomposition
\begin{equation*}
\widetilde{W} ( \Psi_M ) = W_{\aff} ( \overline{\scaledrootsystemsymbol} ) \rtimes \overline{\Omega}
\end{equation*}
from $ W^{\prime} $.

\begin{notation}
Abuse notation and denote by
\begin{equation} \label{Enewiota}
\iota : \widetilde{W} \longrightarrow \widetilde{W} ( \Psi_M )
\end{equation}
the (surjective) composition $ \widetilde{W} \stackrel{\iota}{\longrightarrow} W^{\prime} \stackrel{\varphi^{\prime}}{\longrightarrow} \widetilde{W} ( \Psi_M ) $.
\end{notation}

\begin{lemma} \label{Laffineweylisomtosyntheticaffineweyl}
$ \iota $ from (\ref{Enewiota}) restricts to an isomorphism $ W_{\aff} \directedisom W_{\aff} ( \overline{\scaledrootsystemsymbol} ) $ and $ \iota ( \Omega_G^{\prime} ) = \overline{\Omega} $.
\end{lemma}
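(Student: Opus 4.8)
The plan is to derive Lemma~\ref{Laffineweylisomtosyntheticaffineweyl} as a purely formal consequence of Proposition~\ref{Paffineweylgroup}, Lemma~\ref{Lcompatibilitysquare}, and the \emph{definitions} of $W_{\aff}(\overline{\scaledrootsystemsymbol})$ and $\overline{\Omega}$ as the $\varphi^{\prime}$-images of $W_{\aff}(\scaledrootsystemsymbol)$ and $\Omega^{\prime}$. Recall that the arrow $\iota$ of~(\ref{Enewiota}) is by construction the composition $\widetilde{W} \stackrel{\iota}{\longrightarrow} W^{\prime} \stackrel{\varphi^{\prime}}{\longrightarrow} \widetilde{W}(\Psi_M)$, in which $\varphi^{\prime}$ is a group isomorphism and --- after the notational identification $W_{\aff} = W_{\aff}(\scaledrootsystemsymbol)$ made just after Proposition~\ref{Paffineweylgroup} --- the first map is the identity on $W_{\aff}$ and carries $\Omega_G^{\prime}$ onto $\Omega^{\prime}$.

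For the isomorphism statement, I would observe that under the above identification the restriction of~(\ref{Enewiota}) to $W_{\aff}$ is simply $\varphi^{\prime}|_{W_{\aff}(\scaledrootsystemsymbol)}$; being a restriction of an injective homomorphism it is injective, and its image is $\varphi^{\prime}(W_{\aff}(\scaledrootsystemsymbol))$, which is exactly $W_{\aff}(\overline{\scaledrootsystemsymbol})$ by definition. Hence~(\ref{Enewiota}) restricts to an isomorphism $W_{\aff} \directedisom W_{\aff}(\overline{\scaledrootsystemsymbol})$. For the second statement, applying $\varphi^{\prime}$ to the equality $\iota(\Omega_G^{\prime}) = \Omega^{\prime}$ from Proposition~\ref{Paffineweylgroup} and using the definition $\overline{\Omega} \defeq \varphi^{\prime}(\Omega^{\prime})$ gives $\iota(\Omega_G^{\prime}) = \overline{\Omega}$ for the $\iota$ of~(\ref{Enewiota}).

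I do not expect any genuine obstacle; the only items worth a sentence are bookkeeping. First, one should note explicitly that the identification $W_{\aff} = W_{\aff}(\scaledrootsystemsymbol)$ is in force, so that ``the restriction of $\iota$ to $W_{\aff}$'' is unambiguous and visibly has codomain $W_{\aff}(\overline{\scaledrootsystemsymbol})$. Second, one should remark that $\varphi^{\prime}$, being a group isomorphism, carries the internal semidirect product decomposition $W^{\prime} = W_{\aff}(\scaledrootsystemsymbol) \rtimes \Omega^{\prime}$ onto $\widetilde{W}(\Psi_M) = W_{\aff}(\overline{\scaledrootsystemsymbol}) \rtimes \overline{\Omega}$ --- but this is immediate from how $W_{\aff}(\overline{\scaledrootsystemsymbol})$ and $\overline{\Omega}$ were defined and is already recorded in \S\ref{SSvariouscompatibilities}. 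Nothing beyond Proposition~\ref{Paffineweylgroup} and Lemma~\ref{Lcompatibilitysquare} enters the argument.
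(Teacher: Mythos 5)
Your argument is correct and is exactly the paper's argument, merely spelled out in more detail: the paper's proof of Lemma~\ref{Laffineweylisomtosyntheticaffineweyl} is the one-line observation that both claims are automatic from the definitions of $W_{\aff}(\overline{\scaledrootsystemsymbol})$ and $\overline{\Omega}$ as $\varphi^{\prime}$-images, given Proposition~\ref{Paffineweylgroup}.
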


\begin{proof}
Both are automatic by definition of $ W_{\aff} ( \overline{\scaledrootsystemsymbol} ) $ and $ \overline{\Omega} $ because of Proposition \ref{Paffineweylgroup}.
\end{proof}

\begin{notation}
From now on, consider $ \Delta_{\aff} $ as a Coxeter generating set for $ W_{\aff} ( \overline{\scaledrootsystemsymbol} ) $ via $ \iota $. Similarly, use $ \ell $ to refer to the length function on the Coxeter group $ ( W_{\aff} ( \overline{\scaledrootsystemsymbol} ), \Delta_{\aff} ) $ and its inflation along the projection $ \widetilde{W} ( \Psi_M ) \rightarrow W_{\aff} ( \overline{\scaledrootsystemsymbol} ) $.
\end{notation}

\begin{corollary}
$ \ell ( \iota ( w ) ) = \ell ( w ) $ for all $ w \in \widetilde{W} $.
\end{corollary}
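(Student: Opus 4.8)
The plan is to reduce immediately to the corresponding statement on the affine Weyl groups, where it holds essentially by construction. Fix $ w \in \widetilde{W} $ and use the internal semidirect product decomposition $ \widetilde{W} = W_{\aff} \rtimes \Omega_G^{\prime} $ recalled in \S\ref{SSsemidirectproductdecomps} to write $ w = v \tau $ with $ v \in W_{\aff} $ and $ \tau \in \Omega_G^{\prime} $. By the way $ \ell $ was extended from $ W_{\aff} $ to $ \widetilde{W} $ (inflation along the projection $ \widetilde{W} \rightarrow W_{\aff} $), we have $ \ell ( w ) = \ell ( v ) $.

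Next I would apply $ \iota $. Since $ \iota $ is a group homomorphism, $ \iota ( w ) = \iota ( v ) \iota ( \tau ) $, and by Lemma \ref{Laffineweylisomtosyntheticaffineweyl} we have $ \iota ( v ) \in W_{\aff} ( \overline{\scaledrootsystemsymbol} ) $ and $ \iota ( \tau ) \in \overline{\Omega} $. Because $ \widetilde{W} ( \Psi_M ) = W_{\aff} ( \overline{\scaledrootsystemsymbol} ) \rtimes \overline{\Omega} $ and the length function on $ \widetilde{W} ( \Psi_M ) $ is likewise defined by inflation along the projection onto $ W_{\aff} ( \overline{\scaledrootsystemsymbol} ) $, it follows that $ \ell ( \iota ( w ) ) = \ell ( \iota ( v ) ) $. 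Thus the claim reduces to the identity $ \ell ( \iota ( v ) ) = \ell ( v ) $ for $ v \in W_{\aff} $.

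Finally, this last identity holds by fiat: Lemma \ref{Laffineweylisomtosyntheticaffineweyl} says that $ \iota $ restricts to a group isomorphism $ W_{\aff} \directedisom W_{\aff} ( \overline{\scaledrootsystemsymbol} ) $, and the Notation block preceding the corollary defines $ \Delta_{\aff} $ as a Coxeter generating set for $ W_{\aff} ( \overline{\scaledrootsystemsymbol} ) $ precisely by transport of structure along this isomorphism, with $ \ell $ the associated length function. An isomorphism of Coxeter systems preserves reduced-word length, so $ \ell ( \iota ( v ) ) = \ell ( v ) $, and the corollary follows.

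I do not expect any real obstacle here: the substance is entirely contained in Proposition \ref{Paffineweylgroup} and Lemma \ref{Laffineweylisomtosyntheticaffineweyl}, both already established, and the corollary is just the bookkeeping observation that $ \iota $ respects both semidirect product decompositions and the chosen generating set $ \Delta_{\aff} $, so that the two a priori distinct length functions are intertwined. The only point worth spelling out carefully is that the $ \Omega $-parts lie in the kernels of the respective length functions, which is exactly the assertion $ \iota ( \Omega_G^{\prime} ) = \overline{\Omega} $ combined with the convention that, on each side, length is inflated from the affine Weyl group.
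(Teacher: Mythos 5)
Your proof is correct and takes essentially the same approach as the paper, which simply notes that the claim is immediate from Lemma \ref{Laffineweylisomtosyntheticaffineweyl} because both length functions are trivially extended from the same based affine Weyl group; you have just spelled out the underlying bookkeeping explicitly.
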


\begin{proof}
This is immediate from Lemma \ref{Laffineweylisomtosyntheticaffineweyl} because the length functions were extended trivially from the same based affine Weyl group.
\end{proof}

\section{The Iwahori-Matsumoto presentation} \label{Siwahorimatsumotopresentation}

\subsection{Statement and proof}

Recall from \S3.2 that $ W_{\aff} $ is a Coxeter group with Coxeter generating set $ \Delta_{\aff} $. The following, which is essentially just Exercise 24 in Chapter IV, \S2 of \cite{bourbaki}, gives an Iwahori-Matsumoto Presentation for $ \mathcal{H} ( G ; I ) $:
\begin{prop} \label{Piwahorimatsumotopresentation}
For all $ w \in W_{\aff} $, all reduced expressions $ w = s_1 \cdots s_n $ ($ s_i \in \Delta_{\aff} $), all $ z \in \Omega_G $, and all $ s \in \Delta_{\aff} $ the following identities are true in $ \mathcal{H} ( G ; I ) $:
\begin{enumerate}
\item \label{IMPnormalizer} $ \iwahoriheckebasissymbol_{w} * \iwahoriheckebasissymbol_{z} = \iwahoriheckebasissymbol_{ w z } = \iwahoriheckebasissymbol_{z} * \iwahoriheckebasissymbol_{z^{-1} w z} $

\item \label{IMPbraid} $ \iwahoriheckebasissymbol_w = \iwahoriheckebasissymbol_{s_1} * \cdots * \iwahoriheckebasissymbol_{s_n} $

\item \label{IMPquadratic} $ \iwahoriheckebasissymbol_s * \iwahoriheckebasissymbol_s = ( \param(s) - 1 ) \iwahoriheckebasissymbol_s + \param(s) \iwahoriheckebasissymbol_1 $
\end{enumerate}
\end{prop}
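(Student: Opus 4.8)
**

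The plan is to establish the three identities by reducing everything to the double-coset combinatorics governed by the Tits System Axiom ($ I u I v I = I u v I $ whenever $ \ell(uv) = \ell(u) + \ell(v) $) together with a single quadratic relation coming from the rank-one computation inside $ I s I \cup I $.

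First I would handle identity \eqref{IMPnormalizer}. Since $ z \in \Omega_G $ stabilizes the base alcove $ \alcovesymbol $, its representative in $ N_G(S)(F) $ normalizes $ I $, so $ I z I = z I = I z $. Hence convolution with $ \iwahoriheckebasissymbol_z $ amounts to translation: $ \iwahoriheckebasissymbol_w * \iwahoriheckebasissymbol_z $ is supported on $ I w I z = I w z I $ and, because $ \param(z) = 1 $, a quick volume/coset count gives $ \iwahoriheckebasissymbol_w * \iwahoriheckebasissymbol_z = \iwahoriheckebasissymbol_{wz} $; the other equality follows symmetrically using $ z I = I z $ and $ w z = z (z^{-1} w z) $. (One should note that $ \ell(wz) = \ell(w) $ and $ \ell(z^{-1} w z) = \ell(w) $ since $ \ell $ factors through $ W_{\aff} $ and $ z^{-1}(\cdot)z $ is length-preserving, so there is no clash with the other identities.)

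Next, identity \eqref{IMPquadratic} is the genuine input. For a single $ s \in \Delta_{\aff} $, one decomposes $ I s I = \bigsqcup_{j=1}^{\param(s)} x_j I $ and computes $ \iwahoriheckebasissymbol_s * \iwahoriheckebasissymbol_s $ pointwise: its value at $ 1 $ counts the cosets, giving $ \param(s) $, while its value on $ I s I $ is $ \param(s) - 1 $ (one coset is ``lost'' to the $ I $-component), and the support is contained in $ I \cup I s I $ because $ s^2 = 1 $. This is the classical rank-one Hecke computation; the only thing to check carefully is that the representatives can be chosen inside $ N_G(S)(F) \cap \fixer_{G(F)}(\text{wall}) $, which is exactly the content of the Tits System / $ BN $-pair structure recorded in \S\ref{SSsemidirectproductdecomps} via 5.2.12 of \cite{BT2}.

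Then identity \eqref{IMPbraid} follows by induction on $ n $: for a reduced expression $ w = s_1 \cdots s_n $, writing $ w = s_1 w' $ with $ w' = s_2 \cdots s_n $ reduced and $ \ell(w) = 1 + \ell(w') $, the Tits System Axiom gives $ I s_1 I \cdot I w' I = I w I $, and a coset count (using $ \param(w) = \param(s_1) \param(w') $, which is itself a consequence of the axiom) shows the convolution $ \iwahoriheckebasissymbol_{s_1} * \iwahoriheckebasissymbol_{w'} $ has coefficient $ 1 $ on $ \iwahoriheckebasissymbol_w $ and is supported only there. The main obstacle is really bookkeeping: one must keep straight that $ \ell $ and $ \param $ have been extended from $ W_{\aff} $ to all of $ \widetilde{W} $ trivially along $ \Omega_G $, so that all the length-additivity hypotheses needed to invoke the Tits System Axiom are visibly satisfied; once that is in place, each identity is a short, standard convolution computation, and no deeper structure theory beyond what \S\ref{Snotation}--\S\ref{Sgeneralitiesoniwahoriweylgroups} already provide is required. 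I expect the write-up to cite Exercise 24 of Chapter IV, \S2 of \cite{bourbaki} for the purely Coxeter-theoretic content of \eqref{IMPbraid}--\eqref{IMPquadratic}, and to supply only the short argument for \eqref{IMPnormalizer} and the reduction.
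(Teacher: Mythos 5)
Your proposal is correct and follows essentially the same route as the paper: (\ref{IMPnormalizer}) via the fact that representatives of $\Omega_G$ normalize $I$, (\ref{IMPbraid}) by induction using the Tits System Axiom $IsIwI = IswI$ when $\ell(sw) > \ell(w)$, and (\ref{IMPquadratic}) by the standard support/coset-count analysis of $IsIsI = I \cup IsI$. The only place you are slightly less explicit than the paper is in (\ref{IMPquadratic}), where the paper carefully verifies that $[(IsI \cap IsIs):I] = [IsI:I] - 1$ by exhibiting representatives $x_1, \ldots, x_k \in sI$ and showing $x_i s \in IsI$ for each $i$, whereas you summarize this as ``one coset is lost to the $I$-component''; the underlying computation is the same.
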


\begin{proof}
We first prove (\ref{IMPnormalizer}). By examining the integral that defines $ \iwahoriheckebasissymbol_w * \iwahoriheckebasissymbol_z $, it is clear that $ \iwahoriheckebasissymbol_w * \iwahoriheckebasissymbol_z $ is supported on $ I w I z I $. Since $ G(F)_1 \triangleleft G(F) $ and $ z $ stabilizes the base alcove $ \alcovesymbol $, $ z $ (meaning its representative in $ N_G(A)(F) $) normalizes $ I $, so $ I w I z I = I w z I $. The integrand of the integral defining $ ( \iwahoriheckebasissymbol_w * \iwahoriheckebasissymbol_z ) ( w z ) $ is supported on $ I w I $. Write $ I w I = w I \cup ( I w I \setminus w I ) $ and notice that the integration over $ w I $ is $ 1 $, since $ dG_I $ is translation-invariant and $ dG_I(I) = 1 $. On the other hand, the support of the integrand $ g \mapsto 1_{IzI} ( g^{-1} w z ) $ is disjoint from the complement $ I w I \setminus w I $, so $ ( \iwahoriheckebasissymbol_w * \iwahoriheckebasissymbol_z ) ( w z ) = 1 $. Next, we prove (\ref{IMPbraid}). Suppose $ w \in W_{\aff} $, $ s \in \Delta_{\aff} $, $ \ell ( s w ) > \ell ( w ) $. As in the previous part, the function $ 1_{ I s I } * 1_{ I w I } $ is supported on $ I s I w I $ and the Tits System Axiom (see \S\ref{SSsemidirectproductdecomps}) implies that $ I s I w I = I s w I $. So $ 1_{ I s I } * 1_{ I w I } $ is supported on $ I s w I $ and it suffices to show that $ ( 1_{ I s I } * 1_{ I w I } ) ( s w ) = 1 $. As before, this is true because the integrand of the integral defining $ ( 1_{ I s I } * 1_{ I w I } ) ( s w ) $ is supported on $ I s I $, is constant $ 1 $ on $ s I $, and is $ 0 $ on the complement $ I s I \setminus s I $. Induction now proves relation (\ref{IMPbraid}). Finally, we prove (\ref{IMPquadratic}). As in the previous parts, examining the integral which defines $ \iwahoriheckebasissymbol_s * \iwahoriheckebasissymbol_s $ shows that the support of $ \iwahoriheckebasissymbol_s * \iwahoriheckebasissymbol_s $ is $ I s I s I = I s I \cup I $. It is clear from the definition of $ \iwahoriheckebasissymbol_s * \iwahoriheckebasissymbol_s $ that the value at the identity $ 1 \in G(F) $, i.e. the coefficient of $ \iwahoriheckebasissymbol_1 $, is $ [ I s I : I ] $ and that the coefficient of $ \iwahoriheckebasissymbol_s $ is the index $ [ ( I s I \cap I s I s ) : I ] $. It suffices now to show that this latter index is simply $ [ I s I : I ] - 1 $. Let $ s, x_1, \ldots, x_k \in s I $ be a system of representatives for the cosets $ I \backslash I s I $. Certainly $ I s I \cap I s I s \subset I x_1 \cup \cdots \cup I x_k $, since $ I $ does not contain representatives for non-identity elements of $ \finiteweylgroup $. On the other hand, $ x_i s \in s I s \subset I \cup I s I $ by choice and the Tits System Axiom (see \S\ref{SSsemidirectproductdecomps}). By choice $ x_i s \notin I $ so $ x_i s \in I s I $ and therefore $ x_i \in I s I s $. Together, $ I s I \cap I s I s = I x_1 \cup \cdots \cup I x_k $, as desired.
\end{proof}

\begin{remark}
A formal consequence of the Iwahori-Matsumoto Presentation is that the parameter system $ \param : \Delta_{\aff} \rightarrow \N $ is conjugation-invariant.
\end{remark}


\subsection{Equality of two related systems of Hecke algebra parameters}

\begin{center}
\emph{The main purpose of this short subsection is to prove that the parameters occurring in various presentations of $ \mathcal{H} ( G; I ) $ are actually the same as those occurring in those of $ \mathcal{H} ( G; \widetilde{I} ) $. See also Remark \ref{Raveragingmap}. This subsection is not logically required for the rest of the paper.}
\end{center}

\begin{lemma} \label{Lgeneralizedtorsionparam}
The inclusions $ \widetilde{I} \subset \widetilde{K} $ and $ I \subset K $ induce a bijection $ \widetilde{I} / I \directedisom \widetilde{K} / K $ and $ [ K : I ] = [ \widetilde{K} : \widetilde{I} ] $
\end{lemma}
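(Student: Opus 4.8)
The plan is to analyze the natural map $\widetilde{I}/I \longrightarrow \widetilde{K}/K$, $xI \mapsto xK$, which is well-defined precisely because $I \subseteq K$, show it is a bijection, and then read off the index equality from multiplicativity of indices. The containments themselves are immediate from the definitions: $I \subseteq K$ and $\widetilde{I} \subseteq \widetilde{K}$ because $\vertexsymbol$ lies in the closure of $\alcovesymbol$, so $\fixer_{G(F)}(\alcovesymbol) \subseteq \fixer_{G(F)}(\vertexsymbol)$; and $I \subseteq \widetilde{I}$, $K \subseteq \widetilde{K}$ because $G(F)_1 \subseteq G(F)^1$ (\S\ref{SSkottwitzcompatwithbruhattits}). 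Since all four groups are compact open, every index that appears below is finite.

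For injectivity the computation is routine: if $x \in \widetilde{I}$ satisfies $xK = K$ then $x \in \widetilde{I} \cap K$, and unwinding definitions gives $\widetilde{I} \cap K = G(F)^1 \cap G(F)_1 \cap \fixer_{G(F)}(\alcovesymbol) \cap \fixer_{G(F)}(\vertexsymbol) = G(F)_1 \cap \fixer_{G(F)}(\alcovesymbol) = I$, using once more $G(F)_1 \subseteq G(F)^1$ and $\fixer_{G(F)}(\alcovesymbol) \subseteq \fixer_{G(F)}(\vertexsymbol)$.

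Surjectivity is the substantive point, and the mechanism is to exhibit a convenient set of coset representatives living in $\widetilde{I}$. The key observation is that $M(F)^1 \subseteq \widetilde{I}$: it lies in $G(F)^1$ (a fact already used in the proof of Lemma \ref{LMtorsioninsideGtorsion}), and it acts trivially on the apartment $\apartmentsymbol$ — indeed $M(F)^1 = \kernel(\nu_M) = \kernel(\nu)$ is by construction the kernel of the translation action of $M(F)$ — so $M(F)^1 \subseteq \fixer_{G(F)}(\alcovesymbol)$ as well. On the other hand, Proposition 11.1.4 of \cite{HRo}, recalled in the proof of Lemma \ref{LMtorsioninsideGtorsion}, gives $M(F)^1 \subseteq \widetilde{K}$ together with a bijection $M(F)^1/M(F)_1 \directedisom \widetilde{K}/K$; hence every class in $\widetilde{K}/K$ is represented by an element of $M(F)^1 \subseteq \widetilde{I}$, which is exactly surjectivity of $\widetilde{I}/I \to \widetilde{K}/K$. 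With the bijection in hand, I conclude by writing $[\widetilde{K}:I] = [\widetilde{K}:K]\,[K:I] = [\widetilde{K}:\widetilde{I}]\,[\widetilde{I}:I]$ and cancelling the equal finite numbers $[\widetilde{K}:K] = |\widetilde{K}/K| = |\widetilde{I}/I| = [\widetilde{I}:I]$.

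The only non-formal ingredient is the bijection $M(F)^1/M(F)_1 \directedisom \widetilde{K}/K$ imported from \cite{HRo}; everything else is bookkeeping with the four definitions. Accordingly, the main obstacle is really just the surjectivity step — i.e. correctly invoking that external fact and pairing it with the small but essential observation that $M(F)^1$ sits not only in $\widetilde{K}$ but in $\widetilde{I}$.
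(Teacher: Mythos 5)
Your proof is correct and follows essentially the same route as the paper: injectivity comes from $\widetilde{I}\cap K = I$, surjectivity from the fact that $M(F)^1$ both lies in $\widetilde{I}$ and represents all of $\widetilde{K}/K$ via Proposition 11.1.4 of \cite{HRo}, and the index equality drops out by multiplicativity. The only difference is cosmetic — you derive $M(F)^1\subset\widetilde{I}$ directly from $M(F)^1=\kernel(\nu)$ and $M(F)^1\subset G(F)^1$, whereas the paper cites Lemma 8.0.1 and Remark 8.0.2 of \cite{HRo} for the equality $M(F)^1 = M(F)\cap\widetilde{I}$, and likewise the paper invokes Proposition 3 of the Appendix to \cite{PRH} where you unwind the definitions of $\widetilde I\cap K$ by hand.
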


\begin{proof}
By Proposition 11.1.4 of \cite{HRo}, every element of $ \widetilde{K} $ is represented modulo $ K $ by an element of $ M(F)^1 $. Since $ M(F)^1 $ is also equal to $ M(F) \cap \widetilde{I} $ (this follows from \S8 of \cite{HRo}, especially Lemma 8.0.1 and Remark 8.0.2), the induced map $ \widetilde{I} / I \rightarrow \widetilde{K} / K $ is surjective. On the other hand, $ \widetilde{I} \cap K \subset G ( F )_1 $ (by Proposition 3 of the Appendix to \cite{PRH}) and fixes the base alcove $ \alcovesymbol $ pointwise, so $ \widetilde{I} \cap K = I $ (Proposition 3 of the Appendix to \cite{PRH} again) and the induced map is also injective. The equality follows since $ [ \widetilde{K} : K ][ K : I ]=[ \widetilde{K} : \widetilde{I} ][ \widetilde{I} : I ] $.
\end{proof}

\begin{remark}
Lemma \ref{Lgeneralizedtorsionparam} appeared earlier in the Appendix of \cite{haines2}.
\end{remark}

By Lemma 5.0.1 of \cite{HRo}, the inclusion $ K \cap N_G(S)(F) \subset N_G(A)(F) $ induces a group isomorphism
\begin{equation} \label{ErepresentKbyfiniteWeyl}
( N_G(S)(F) \cap K ) / T(F)_1 \directedisom \finiteweylgroup
\end{equation}
(the quotient here is denoted by ``$ \widetilde{W}_K^{\sigma} $'' in \cite{HRo}). Unsurprisingly,
\begin{lemma} \label{LdoubleKmodIcosets}
If $ n_w \in N_G(S)(F) \cap K $ denotes a representative of $ w \in \finiteweylgroup $ (which is possible for all $ w \in \finiteweylgroup $ because of (\ref{ErepresentKbyfiniteWeyl})) then the function $ w \mapsto I n_w I $ is a bijection $ \finiteweylgroup \directedisom I \backslash K / I $.
\end{lemma}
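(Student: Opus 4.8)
The plan is to establish the two halves separately: that $ w \mapsto I n_w I $ is a well-defined injection of $ \finiteweylgroup $ into $ I \backslash G(F) / I $ landing in $ I \backslash K / I $, and that this injection is onto $ I \backslash K / I $. For well-definedness, note that any two representatives of a fixed $ w \in \finiteweylgroup $ in $ N_G(S)(F) \cap K $ differ by an element of the kernel of (\ref{ErepresentKbyfiniteWeyl}), which is $ T(F)_1 $ (this lies in $ N_G(S)(F) \cap K $ since $ T $ centralizes $ S $ and since $ T(F)_1 \subset G(F)_1 $ acts trivially on $ \apartmentsymbol $). But in fact $ T(F)_1 \subset I $: it lies in $ M(F)^1 = \kernel ( \nu_M ) $, hence acts trivially on $ \apartmentsymbol $ and so fixes $ \alcovesymbol $ pointwise, and $ T(F)_1 \subset G(F)_1 $ by functoriality of $ \kappa $. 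Therefore $ I n_w I $ is independent of the choice of $ n_w $, and it is a double coset inside $ K $ because $ n_w \in K $ and $ I \subset K $.

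Injectivity is then immediate: by Proposition 8 / Remark 9 of the Appendix to \cite{PRH}, distinct classes in $ \widetilde{W} = N_G(S)(F) / T(F)_1 $ yield distinct double cosets $ I w I $ in $ G(F) $, and the composite of (\ref{ErepresentKbyfiniteWeyl}) with the inclusion $ ( N_G(S)(F) \cap K ) / T(F)_1 \hookrightarrow \widetilde{W} $ sends $ w $ to the class of $ n_w $; hence $ I n_w I = I n_{w'} I $ forces $ w = w' $. For surjectivity, I would identify $ K $ with the standard parahoric subgroup attached to $ \Delta_{\finiteweylsymbol} \subset \Delta_{\aff} $ (the parahoric at the special vertex $ \vertexsymbol $, the walls through $ \vertexsymbol $ being exactly those corresponding to $ \Delta_{\finiteweylsymbol} $), and invoke the decomposition $ P_J = \bigsqcup_{u \in W_J} I u I $ of a standard parahoric from \cite{BT2}; taking $ J = \Delta_{\finiteweylsymbol} $ and using $ W_{\Delta_{\finiteweylsymbol}} = \finiteweylgroup $ (the Coxeter subgroup of $ W_{\aff} $ generated by $ \Delta_{\finiteweylsymbol} $, which is $ \finiteweylgroup $ by construction) gives $ K = \bigsqcup_{w \in \finiteweylgroup} I w I $, and (\ref{ErepresentKbyfiniteWeyl}) supplies the representatives $ n_w \in N_G(S)(F) \cap K $. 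Equivalently, one can pass to the reductive quotient $ \overline{\mathfrak{G}}_{\vertexsymbol} $ of the special fiber of $ \mathfrak{G}^0_{\vertexsymbol} $, inside which $ I $ is the preimage of a Borel subgroup, so that $ I \backslash K / I $ becomes a Borel double-coset space and the classical Bruhat decomposition indexes it by the Weyl group, which is $ \finiteweylgroup $.

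The only step with genuine content is surjectivity, and there the substantive input is external: one must know that $ K = G(F)_1 \cap \fixer_{G(F)}(\vertexsymbol) $ coincides with the standard parahoric of type $ \Delta_{\finiteweylsymbol} $ and that standard parahorics admit a Bruhat-style decomposition over $ I $. Everything else — well-definedness, the containment $ T(F)_1 \subset I $, and injectivity — is formal given the PRH parametrization of $ I \backslash G(F) / I $ together with (\ref{ErepresentKbyfiniteWeyl}).
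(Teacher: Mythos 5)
Your argument is correct, and you get the well-definedness (via $T(F)_1 \subset I$) and the injectivity (via the uniqueness in the PRH parametrization of $I \backslash G(F) / I$) essentially the same way the paper does. Where you diverge is surjectivity. You reach for the full Bruhat decomposition of a standard parahoric, $K = \bigsqcup_{u \in W_J} I u I$ with $J = \Delta_{\finiteweylsymbol}$ (or equivalently the Bruhat decomposition in the reductive quotient $\overline{\mathfrak{G}}_{\vertexsymbol}$). That works, but it imports more machinery than is needed: one must know that $K$ is the standard parahoric of type $\Delta_{\finiteweylsymbol}$ and that parahorics admit such a decomposition. The paper instead gets surjectivity for free out of the same PRH parametrization already in play for injectivity: given $k \in K$, write $I k I = I n I$ with $n \in N_G(S)(F)$; since $I \subset K$ and $k \in K$, the double coset $I n I$ is contained in $K$, so $n \in N_G(S)(F) \cap K$, and then (\ref{ErepresentKbyfiniteWeyl}) identifies the class of $n$ as an element of $\finiteweylgroup$. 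This keeps the lemma self-contained relative to what is already set up in \S\ref{SSparahoricsandiwahoriweylgroups}, whereas your route is perhaps conceptually cleaner (it makes the Tits-system structure of $(K, I)$ explicit) at the cost of invoking the parahoric decomposition theorem as an external input.
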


\begin{proof}
The function is well-defined because of the careful choice of representatives. First, we prove surjectivity. Let $ k \in K $ be arbitrary. By Proposition 8 / Remark 9 of the Appendix to \cite{PRH}, there is some $ n \in N_G(S)(F) $ for which $ I k I = I n I $ and since $ I \subset K $, necessarily $ n \in N_G(S)(F) \cap K \subset N_G(A)(F) \cap K $. Now, we verify injectivity. Suppose $ n_1, n_2 \in N_G(S)(F) \cap K $ and $ I n_1 I = I n_2 I $. But by the uniqueness statement in Proposition 8 / Remark 9 of the Appendix to \cite{PRH}, $ n_1 n_2^{-1} \in T(F)_1 $, which implies, using (\ref{ErepresentKbyfiniteWeyl}) again, that $ n_1, n_2 $ represent the same element of $ \finiteweylgroup $.
\end{proof}

\begin{prop} \label{Pmacdonaldnumbersareunchanged}
$ [ I w I : I ] = [ \widetilde{I} w \widetilde{I} : \widetilde{I} ] $ for all $ w \in \finiteweylgroup $.
\end{prop}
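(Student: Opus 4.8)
The goal is to show the index $[IwI:I]$ is unchanged when $I,K$ are replaced by their larger siblings $\widetilde{I},\widetilde{K}$, for $w\in\finiteweylgroup$. The natural strategy is to interpret both indices as counting something inside $K$ (resp. $\widetilde{K}$) and then use Lemma~\ref{Lgeneralizedtorsionparam}, which already tells us $\widetilde{I}/I\directedisom\widetilde{K}/K$. First I would observe that, because a representative $n_w\in N_G(S)(F)\cap K$ exists (by (\ref{ErepresentKbyfiniteWeyl})) and $I\subset K$, the double coset $IwI$ for $w\in\finiteweylgroup$ is entirely contained in $K$; likewise $\widetilde{I}w\widetilde{I}\subset\widetilde{K}$, since $\widetilde{I}\subset\widetilde{K}$ and the same representative $n_w$ lies in $\widetilde{K}$ (using $K\subset\widetilde{K}$). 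So both indices can be computed ``inside'' the compact group $K$ (resp. $\widetilde{K}$): $[IwI:I]$ is the number of left $I$-cosets in $I n_w I$, all of which sit in $K$, and similarly on the tilde side.

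\textbf{Key steps.} The main step is a counting/orbit argument. Fix $w\in\finiteweylgroup$ and the representative $n_w$. I want to produce a bijection between the set of left cosets $\{xI : xI\subset I n_w I\}$ and the set $\{x\widetilde{I} : x\widetilde{I}\subset \widetilde{I} n_w \widetilde{I}\}$. The cleanest way: note $I n_w I = \bigsqcup_j x_j I$ with $x_j\in K$; I claim $\widetilde{I} n_w \widetilde{I} = \bigsqcup_j x_j \widetilde{I}$, i.e. the \emph{same} representatives $x_j$ work, and the cosets are still disjoint. For the covering: $\widetilde{I} n_w \widetilde{I} \subset \widetilde{I} I n_w I \widetilde{I}$, but one must relate $\widetilde{I} n_w \widetilde{I}$ to $I n_w I$; here use that $\widetilde{I}=I\cdot(\widetilde{I}\cap M(F))$ — more precisely, by Lemma~\ref{Lgeneralizedtorsionparam}'s proof, $\widetilde{I}=I\cdot(M(F)^1)$ modulo the identification $M(F)^1=M(F)\cap\widetilde{I}$, and $M(F)^1$ normalizes things appropriately since $n_w\in N_G(A)(F)$ normalizes $M(F)$ and its subgroup $M(F)^1$ (functoriality of $\kappa$, $\kappa_M(n_w m n_w^{-1})$ computed via the Weyl action). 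Then $\widetilde{I} n_w \widetilde{I} = I M(F)^1 n_w M(F)^1 I = I n_w M(F)^1 I = I n_w I \cdot (\text{stuff in }\widetilde{I})$, so $\widetilde{I}n_w\widetilde{I}$ is a union of the $x_j\widetilde{I}$. For disjointness of $x_j\widetilde{I}$: if $x_i\widetilde{I}=x_j\widetilde{I}$ then $x_i^{-1}x_j\in\widetilde{I}\cap K\cdot K$; since $x_i,x_j\in K$ we get $x_i^{-1}x_j\in\widetilde{I}\cap K=I$ by the injectivity part of Lemma~\ref{Lgeneralizedtorsionparam}'s proof (Proposition 3 of the Appendix to \cite{PRH}), hence $i=j$. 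This gives $[\widetilde{I}n_w\widetilde{I}:\widetilde{I}]=[In_wI:I]$.

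\textbf{Alternative, cleaner route.} Rather than chase cosets directly, I might instead count ``from above'': $[K:I]=\sum_{w\in\finiteweylgroup}[IwI:I]$ by Lemma~\ref{LdoubleKmodIcosets} (partitioning $K$ into the $|\finiteweylgroup|$ double cosets $In_wI$), and likewise $[\widetilde{K}:\widetilde{I}]=\sum_{w\in\finiteweylgroup}[\widetilde{I}w\widetilde{I}:\widetilde{I}]$, provided the analogue of Lemma~\ref{LdoubleKmodIcosets} holds for $\widetilde{I}\backslash\widetilde{K}/\widetilde{I}$ — which it does, by the same argument using (\ref{ErepresentKbyfiniteWeyl}) and the bijection $\widetilde{I}/I\directedisom\widetilde{K}/K$. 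Lemma~\ref{Lgeneralizedtorsionparam} gives $[K:I]=[\widetilde{K}:\widetilde{I}]$, so the two sums of nonnegative integers agree. To upgrade the equality of sums to a term-by-term equality, I'd show each individual inequality $[\widetilde{I}w\widetilde{I}:\widetilde{I}]\le[IwI:I]$ — which follows because any set of representatives for $I\backslash In_wI$ also represents (surjects onto) $\widetilde{I}\backslash\widetilde{I}n_w\widetilde{I}$, using $\widetilde{I}n_w\widetilde{I}\subseteq \widetilde{I}(In_wI)$ together with the surjectivity half of Lemma~\ref{Lgeneralizedtorsionparam}. Equality of the totals then forces equality in each term.

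\textbf{Main obstacle.} The delicate point is the claim $\widetilde{I} n_w \widetilde{I}=I n_w I\cdot(M(F)\cap\widetilde I)$ (or its coset-counting shadow), which rests on $n_w$ normalizing $M(F)^1=M(F)\cap\widetilde I$. Since $n_w\in N_G(S)(F)\cap K\subset N_G(A)(F)$, conjugation by $n_w$ preserves $M=C_G(A)$, hence preserves $M(F)$, $M(F)^1=\kernel(\nu_M)$, and $M(F)_1=\kernel(\kappa_M)$ by functoriality of the Bruhat--Tits and Kottwitz homomorphisms along the inner automorphism. That is the crux; once it's in hand, the rest is bookkeeping with Lemma~\ref{Lgeneralizedtorsionparam} and Proposition~3 of the Appendix to \cite{PRH} (which controls $\widetilde I\cap K=I$). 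I expect the write-up to favor the ``count from above'' route, as it offloads the hardest disjointness bookkeeping onto the already-proven Lemma~\ref{Lgeneralizedtorsionparam} and Lemma~\ref{LdoubleKmodIcosets}.
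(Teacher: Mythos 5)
Your proposal is correct, and both of your routes land, but they rest on a genuinely different mechanism than the paper's. The paper proves the term-by-term inequality in the \emph{opposite} direction from your Route 2: it shows the natural map $IwI/I \rightarrow \widetilde{I}w\widetilde{I}/\widetilde{I}$ is \emph{injective}, i.e. $[IwI:I]\le[\widetilde{I}w\widetilde{I}:\widetilde{I}]$, by reducing to $(w^{-1}Iw)\cap\widetilde{I}\subset I$ and attacking that via the Tits System Axiom (Bruhat decomposition: $w^{-1}Iw\subset\bigcup_{v\preceq w}Iw^{-1}vI$, then $\widetilde{I}$ cannot contain a representative of a nontrivial element of $\finiteweylgroup$). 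It then combines with $\sum_w[IwI:I]=[K:I]=[\widetilde{K}:\widetilde{I}]\ge\sum_w[\widetilde{I}w\widetilde{I}:\widetilde{I}]$. You instead prove \emph{surjectivity} (representatives of $I$-cosets surject onto $\widetilde{I}$-cosets), hence $[\widetilde{I}w\widetilde{I}:\widetilde{I}]\le[IwI:I]$, and your engine is the internal product $\widetilde{I}=I\cdot M(F)^1$ together with the fact that $n_w\in N_G(A)(F)$ normalizes $M(F)^1=\kernel(\nu_M)$ by functoriality; this yields $\widetilde{I}n_w\widetilde{I}=(In_wI)\cdot M(F)^1=(In_wI)\widetilde{I}$, so the same representatives cover. (Your displayed inclusion $\widetilde{I}n_w\widetilde{I}\subseteq\widetilde{I}(In_wI)$ is true but not quite what you need for covering by right $\widetilde{I}$-cosets; absorb $\widetilde{I}$ on the right rather than the left, as you do implicitly.) Your Route 1 goes further: combined with your disjointness observation $\widetilde{I}\cap K=I$ (Proposition 3 of the Appendix to \cite{PRH}) it gives a direct bijection $I\backslash In_wI\directedisom\widetilde{I}\backslash\widetilde{I}n_w\widetilde{I}$ and dispenses with the total-count bookkeeping entirely, which is cleaner than either the paper's argument or your Route 2. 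The trade-off: the paper's injectivity argument is purely Coxeter/Tits-system combinatorics and never appeals to the structure $\widetilde{I}=I\cdot M(F)^1$ or to normalization of $M(F)^1$; your approach front-loads that group-theoretic input (which is anyway lurking in the proof of Lemma~\ref{Lgeneralizedtorsionparam}) and in exchange gets a constructive bijection rather than a numerical squeeze.
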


\begin{proof}
Fix $ w \in \finiteweylgroup $. We first prove that $ I w I / I \canarrow \widetilde{I} w \widetilde{I} / \widetilde{I} $ is injective. Suppose $ \alpha, \beta \in I $ and $ \alpha w \widetilde{I} = \beta w \widetilde{I} $. Then $ w^{-1} ( \beta^{-1} \alpha ) w \in \widetilde{I} $ and so it suffices to show that $ ( ( w^{-1} I w ) \cap \widetilde{I} ) \subset I $. If $ x \in w^{-1} I w $ then by (a corollary of) the Tits System Axiom (see \S\ref{SSsemidirectproductdecomps}), $ x \in I w^{-1} v I $ for some $ v \preceq w $. If $ x \in \widetilde{I} $ also then since $ I \subset \widetilde{I} $, it follows that $ w^{-1} v \in \widetilde{I} $. But $ \widetilde{I} $ does not contain representatives for any non-identity element of $ \finiteweylgroup $, so $ I w^{-1} v I = I $ and $ x \in I $, as desired. Therefore, $ [ I w I : I ] \leq [ \widetilde{I} w \widetilde{I} : \widetilde{I} ] $ for all $ w \in \finiteweylgroup $. By Lemmas \ref{LdoubleKmodIcosets} and \ref{Lgeneralizedtorsionparam}, $ \sum_{w \in \finiteweylgroup} [ I w I : I ] = [ K : I ] = [ \widetilde{K} : \widetilde{I} ] $, and certainly $ [ \widetilde{K} : \widetilde{I} ] \geq \sum_{w \in \finiteweylgroup} [ \widetilde{I} w \widetilde{I} : \widetilde{I} ] $. Combining the two inequalities yields the claim.
\end{proof}

\section{Proof of the Bernstein presentation} \label{Sproofbernsteinrelation}

\subsection{Map to the extended affine Hecke algebra} \label{SSmaptoreducedheckealgebra}

Recall from \S\ref{SSvariouscompatibilities} that $ \Delta_{\aff} $ is also considered to be a Coxeter generating set for $ W_{\aff} ( \overline{\scaledrootsystemsymbol} ) \subset \widetilde{W} ( \Psi_M ) $ and $ \ell $ also as the associated length function on $ \widetilde{W} ( \Psi_M ) $.

Using the based reduced root datum $ ( \Psi_M, \Delta_{\aff} ) $ and the same parameter system $ \param : \Delta_{\aff} \rightarrow \N $ as for $ \mathcal{H} ( G ; I ) $, denote by
\begin{equation*}
\mathcal{H} ( \Psi_M, \Delta_{\aff}, \param )
\end{equation*}
the extended affine Hecke algebra defined in \S3.2 of \cite{lusztig}.

\begin{remark} \label{Rtensorlusztig}
The Hecke algebras used in \cite{lusztig} are actually $ \C[ v, v^{-1} ] $-algebras and use ``exponential'' parameter systems $ L : \widetilde{W} ( \Psi_M ) \rightarrow \N $, i.e. the parameters appearing in the Iwahori-Matsumoto Presentation for $ \mathcal{H} ( \Psi_M, \Delta_{\aff}, \param ) $ (see below) are the indeterminate powers $ v^{2 L(s)} $ instead of the numerical parameters $ \param(s) $ that are used here. This causes no trouble because each numerical parameter $ \param(s) $ is an integer power of $ q $, say $ \param(s) = q^{L(s)} $ (this can be proved by expressing $ I s I / I \cong I /  ( I \cap s I s ) $ as a quotient of two groups of the form $ U^{\natural}_{b, k} $ from \S5.1.16 of \cite{BT2}). Therefore one can define $ \eval : \C[ v, v^{-1} ] \rightarrow \C $ by $ v \mapsto \sqrt{q} $, apply $ - \otimes_{\eval} \C $, and transport any $ \C $-algebra identity from \cite{lusztig} to $ \mathcal{H} ( \Psi_M, \Delta_{\aff}, \param ) $.
\end{remark}

Since $ \otimes $ distributes over arbitrary direct sums, \S3.2 of \cite{lusztig} implies (see Remark \ref{Rtensorlusztig}) that the algebra $ \mathcal{H} ( \Psi_M, \Delta_{\aff}, \param ) $ has a $ \C $-linear basis of elements $ \affineheckebasissymbol_w $ indexed by $ w \in \widetilde{W} ( \Psi_M ) $. By \S3.2 and \S2.1 of \cite{lusztig} the usual Iwahori-Matsumoto relations hold in $ \mathcal{H} ( \Psi_M, \Delta_{\aff}, \param ) $: for all $ s \in \Delta_{\aff} $ and for all $ w, w^{\prime} \in \widetilde{W} ( \Psi_M ) $ satisfying $ \ell ( w w^{\prime} ) = \ell ( w ) + \ell ( w^{\prime} ) $,
\begin{align*}
\affineheckebasissymbol_w \cdot \affineheckebasissymbol_{ w^{\prime} } &= \affineheckebasissymbol_{ w w^{\prime} } \\
\affineheckebasissymbol_s \cdot \affineheckebasissymbol_s &= ( \param(s) - 1 ) \affineheckebasissymbol_s + \param(s) \affineheckebasissymbol_1
\end{align*}

Recall the group homomorphism $ \iota : \widetilde{W} \twoheadrightarrow \widetilde{W} ( \Psi_M ) $ from \S\ref{SSvariouscompatibilities}. Define a $ \C $-linear transformation
\begin{equation*}
\iota_{\mathcal{H}} : \mathcal{H} ( G ; I ) \longrightarrow \mathcal{H} ( \Psi_M, \Delta_{\aff}, \param )
\end{equation*}
by $ \C $-linearly extending the rule $ \iwahoriheckebasissymbol_w \mapsto \affineheckebasissymbol_{ \iota ( w ) } $.

\begin{lemma} \label{Lringhomtoreducedheckealgebra}
$ \iota_{\mathcal{H}} $ is a surjective $ \C $-algebra homomorphism and
\begin{equation*}
\kernel ( \iota_{\mathcal{H}} ) = \{ h = \sum_{w \in \widetilde{W}} h_w \iwahoriheckebasissymbol_w \suchthat \text{ for all } ( u, \tau^{\prime} ) \in \widetilde{W} ( \Psi_M ), \sum_{ \substack{ \tau \in \Omega_G \\ \iota ( \tau ) = \tau^{\prime} } } h_{( u, \tau )} = 0 \}.
\end{equation*}

In particular, if $ \iota_{\mathcal{H}} ( h ) = 0 $ and if $ \Omega_G ( w ) $, the projection of $ w $ onto $ \Omega_G^{\prime} $, is the same for all $ w $ supporting $ h $ then $ h = 0 $.
\end{lemma}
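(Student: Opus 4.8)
The plan is to handle the three assertions in sequence: surjectivity, then multiplicativity, then the kernel description (the concluding ``in particular'' being an immediate corollary of the latter). \emph{Surjectivity} is the cheapest point: $\iota : \widetilde{W} \twoheadrightarrow \widetilde{W}(\Psi_M)$ is surjective (\S\ref{SSvariouscompatibilities}) and the $\affineheckebasissymbol_{w}$ form a $\C$-basis of $\mathcal{H}(\Psi_M, \Delta_{\aff}, \param)$ indexed by $\widetilde{W}(\Psi_M)$, so each $\affineheckebasissymbol_{w^{\prime}}$ equals $\iota_{\mathcal{H}}(\iwahoriheckebasissymbol_w)$ for any $w$ with $\iota(w) = w^{\prime}$.

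For \emph{multiplicativity}, I would reduce everything to the Iwahori-Matsumoto generators. The key inputs are already in hand: by Lemma \ref{Laffineweylisomtosyntheticaffineweyl} and the length identity recorded right after it, $\iota$ restricts to an isomorphism of Coxeter groups $(W_{\aff}, \Delta_{\aff}) \directedisom (W_{\aff}(\overline{\scaledrootsystemsymbol}), \Delta_{\aff})$ preserving lengths and the generating set, and carries $\Omega_G^{\prime}$ onto $\overline{\Omega}$; moreover the parameter function $\param$ is literally the same for both algebras. Consequently $\iota_{\mathcal{H}}$ sends each instance of a relation from Proposition \ref{Piwahorimatsumotopresentation} in $\mathcal{H}(G; I)$ to the corresponding relation in $\mathcal{H}(\Psi_M, \Delta_{\aff}, \param)$: the length hypothesis transports since $\ell(\iota(w)\iota(w^{\prime})) = \ell(ww^{\prime}) = \ell(w) + \ell(w^{\prime})$, the quadratic relation has the same coefficient $\param(s)$, and the $\Omega_G$-relation transports via $\iota(\tau^{-1}w\tau) = \iota(\tau)^{-1}\iota(w)\iota(\tau)$ and $\iota(\Omega_G^{\prime}) = \overline{\Omega}$. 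Since the product $\iwahoriheckebasissymbol_w * \iwahoriheckebasissymbol_{w^{\prime}}$ of any two basis elements can be expanded back into the basis purely by iterating these relations, applying $\iota_{\mathcal{H}}$ to such a computation produces exactly the parallel expansion of $\affineheckebasissymbol_{\iota(w)} \cdot \affineheckebasissymbol_{\iota(w^{\prime})}$, which is what multiplicativity asserts. The only bookkeeping subtlety is that $w$ need not lie in $W_{\aff}$: writing $w = v\tau$ with $v \in W_{\aff}$, $\tau \in \Omega_G$, one first slides $\tau$ to the right past any newly introduced factor $\iwahoriheckebasissymbol_s$ via $\tau s = (\tau s \tau^{-1})\tau$ with $\tau s\tau^{-1} \in \Delta_{\aff}$, reducing to a computation inside $W_{\aff}$; this manipulation is equally legal in $\mathcal{H}(\Psi_M, \Delta_{\aff}, \param)$.

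For the \emph{kernel}, recall that $\iota$ is a group homomorphism which restricts to an isomorphism $W_{\aff} \directedisom W_{\aff}(\overline{\scaledrootsystemsymbol})$ and a surjection $\Omega_G^{\prime} \twoheadrightarrow \overline{\Omega}$ (Lemma \ref{Laffineweylisomtosyntheticaffineweyl}) and is compatible with the semidirect-product decompositions $\widetilde{W} = W_{\aff} \rtimes \Omega_G^{\prime}$ and $\widetilde{W}(\Psi_M) = W_{\aff}(\overline{\scaledrootsystemsymbol}) \rtimes \overline{\Omega}$ attached to the common base alcove $\alcovesymbol$; hence $\iota(v\tau) = \iota(v)\iota(\tau)$ with $\iota(v) \in W_{\aff}(\overline{\scaledrootsystemsymbol})$ and $\iota(\tau) \in \overline{\Omega}$, so after the identifications $W_{\aff} = W_{\aff}(\overline{\scaledrootsystemsymbol})$ and $\Omega_G = \Omega_G^{\prime}$ of \S\ref{SSvariouscompatibilities} the fibre of $\iota$ over $(u, \tau^{\prime})$ is exactly $\{(u,\tau) : \tau \in \Omega_G,\ \iota(\tau) = \tau^{\prime}\}$. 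Therefore for $h = \sum_{(u,\tau)} h_{(u,\tau)}\iwahoriheckebasissymbol_{(u,\tau)}$ one has $\iota_{\mathcal{H}}(h) = \sum_{(u,\tau^{\prime})}\bigl(\sum_{\tau :\, \iota(\tau) = \tau^{\prime}} h_{(u,\tau)}\bigr)\affineheckebasissymbol_{(u,\tau^{\prime})}$, and linear independence of the $\affineheckebasissymbol$'s yields the stated description of $\kernel(\iota_{\mathcal{H}})$. The final clause is then immediate: if $\iota_{\mathcal{H}}(h) = 0$ and every $w$ supporting $h$ has $\Omega_G(w) = \tau_0$, then for each $u$ the inner sum indexed by $(u, \iota(\tau_0))$ has $h_{(u,\tau_0)}$ as its only possibly nonzero term (any $\tau \neq \tau_0$ appearing in it contributes $0$ by the constancy hypothesis), so $h_{(u,\tau_0)} = 0$ for all $u$; combined with $h_{(u,\tau)} = 0$ for $\tau \neq \tau_0$ this forces $h = 0$.

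I expect the genuine work to be concentrated in the multiplicativity step — specifically the passage from $W_{\aff}$ to all of $\widetilde{W}$ and the verification that the length-additivity hypotheses are preserved under $\iota$ — whereas surjectivity and the kernel computation are essentially formal unwindings of the compatibilities already assembled in \S\ref{Sgeneralitiesoniwahoriweylgroups}.
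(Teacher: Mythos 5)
Your proposal is correct and follows essentially the same route as the paper: reduce the ring-homomorphism claim to checking that $\iota_{\mathcal{H}}$ transports each Iwahori--Matsumoto relation (which works because $\iota$ is a length-preserving isomorphism on $W_{\aff}$, carries $\Omega_G^{\prime}$ onto $\overline{\Omega}$, and the parameter system $\param$ is literally shared), and read off the kernel from the semidirect-product compatibility of $\iota$. You spell out the kernel computation and the slide-the-$\Omega_G$-factor bookkeeping somewhat more explicitly than the paper (which dismisses these as ``clear''), but the method is the same.
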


\begin{remark} \label{Raveragingmap}
By Proposition \ref{Pmacdonaldnumbersareunchanged} and isomorphism $ \varphi^{\prime} $ from Lemma \ref{Lcompatibilitysquare}, $ \mathcal{H} ( \Psi_M, \Delta_{\aff}, \param ) = \mathcal{H} ( G ; \widetilde{I} ) $ and $ \iota_{\mathcal{H}} $ is the obvious averaging map, even though averaging maps between Hecke algebras are usually not ring homomorphisms.
\end{remark}

\begin{proof}
It is obvious that $ \iota_{\mathcal{H}} $ is surjective since $ \iota $ itself is. It is clear from Lemma \ref{Laffineweylisomtosyntheticaffineweyl} and the definition of $ \iota_{\mathcal{H}} $ that $ \kernel ( \iota_{\mathcal{H}} ) $ is as described. The final statement is immediate from the description of $ \kernel ( \iota_{\mathcal{H}} ) $. The last thing to verify is that $ \iota_{\mathcal{H}} $ is a ring homomorphism. Since $ \Delta_{\aff} $ is a Coxeter generating set for both $ W_{\aff} $ and $ W_{\aff} ( \overline{\scaledrootsystemsymbol} ) $, suppress $ \iota $ and write $ \iota_{\mathcal{H}} ( \iwahoriheckebasissymbol_s ) = \affineheckebasissymbol_s $ for all $ s \in \Delta_{\aff} $. The Iwahori-Matsumoto Presentation (Proposition \ref{Piwahorimatsumotopresentation}) completely determines the ring structure of $ \mathcal{H} ( G ; I ) $, so it suffices to show \textbf{(1)} that $ \iota_{\mathcal{H}} ( \iwahoriheckebasissymbol_s * \iwahoriheckebasissymbol_s ) = \affineheckebasissymbol_s \cdot \affineheckebasissymbol_s $ for all $ s \in \Delta_{\aff} $, \textbf{(2)} that $ \iota_{\mathcal{H}} ( \iwahoriheckebasissymbol_{s_1} * \cdots * \iwahoriheckebasissymbol_{s_n} ) = \affineheckebasissymbol_{s_1} \cdots \affineheckebasissymbol_{s_n} $ for all $ s_1, \ldots, s_n \in \Delta_{\aff} $ satisfying $ \ell ( s_1 \cdots s_n ) = n $ and \textbf{(3)} that $ \iota_{\mathcal{H}} ( \iwahoriheckebasissymbol_w * \iwahoriheckebasissymbol_{\tau} ) = \affineheckebasissymbol_{\iota ( w ) } \cdot \affineheckebasissymbol_{\iota ( \tau ) } $ for all $ w \in W_{\aff} $ and $ \tau \in \Omega_G^{\prime} $. Statement (1) is true by definition of $ \iota_{\mathcal{H}} $ and the Iwahori-Matsumoto Presentations because the same parameter system $ \param $ is used for both Hecke algebras: $ \iota_{\mathcal{H}} ( \iwahoriheckebasissymbol_s * \iwahoriheckebasissymbol_s ) = \iota_{\mathcal{H}} ( (\param(s)-1) \iwahoriheckebasissymbol_s + \param(s) \iwahoriheckebasissymbol_1 ) \defeq (\param(s)-1) \iota_{\mathcal{H}} ( \iwahoriheckebasissymbol_s ) + \param(s) \iota_{\mathcal{H}} ( \iwahoriheckebasissymbol_1 ) = (\param(s)-1) \affineheckebasissymbol_s + \param(s) \affineheckebasissymbol_1 = \affineheckebasissymbol_s \cdot \affineheckebasissymbol_s $. Similarly, since the same length function $ \ell $ is used for the underlying Coxeter group of both Hecke algebras, if $ s_1, \ldots, s_n \in \Delta_{\aff} $ and $ \ell ( s_1 \cdots s_n ) = n $ then the Iwahori-Matsumoto Presentations for both Hecke algebras yields $ \iota_{\mathcal{H}} ( \iwahoriheckebasissymbol_{s_1} * \cdots * \iwahoriheckebasissymbol_{s_n} ) = \iota_{\mathcal{H}} ( \iwahoriheckebasissymbol_{ s_1 \cdots s_n } ) \defeq \affineheckebasissymbol_{ \iota ( s_1 \cdots s_n ) } = \affineheckebasissymbol_{ \iota ( s_1 ) \cdots \iota ( s_n ) } = \affineheckebasissymbol_{ s_1 } \cdots \affineheckebasissymbol_{ s_n } $. Finally, it is clear from Lemma \ref{Laffineweylisomtosyntheticaffineweyl} and the Iwahori-Matsumoto Presentations that $ \iota_{\mathcal{H}} ( \iwahoriheckebasissymbol_w * \iwahoriheckebasissymbol_{\tau} ) = \iota_{\mathcal{H}} ( \iwahoriheckebasissymbol_{w \tau} ) \defeq \affineheckebasissymbol_{\iota ( w \tau ) } = \affineheckebasissymbol_{\iota ( w ) \iota ( \tau ) } = \affineheckebasissymbol_{\iota ( w ) } \cdot \affineheckebasissymbol_{\iota ( \tau ) } $.
\end{proof}

\subsection{Dominance in $ \Omega_M $} \label{SSdominance}

Recall from \S\ref{SSgenuinerootdatum} the $ \R $-linear isomorphism $ \cochargroup ( A ) \otimes_{\Z} \R \directedisom \overline{\Omega}_M \otimes_{\Z} \R $. It is clear from the construction of $ \Psi_M $ (see \S\ref{SSgenuinerootdatum}) that this isomorphism preserves the simplicial structures coming from $ \scaledrootsystemsymbol $ resp. $ \overline{\scaledrootsystemsymbol} $, so abuse notation and identify $ \apartmentsymbol = \overline{\Omega}_M \otimes_{\Z} \R $ also. Accordingly, consider $ \alcovesymbol \subset \mathcal{C} \subset \overline{\Omega}_M \otimes_{\Z} \R $.

Recall the standard definition of dominance for extended affine Weyl groups: $ \lambda \in \overline{\Omega}_M $ is \emph{dominant} iff $ t_{\lambda} ( \alcovesymbol ) \subset \mathcal{C} $. Equivalently, $ \lambda $ is dominant iff $ t_{\lambda} ( \vertexsymbol ) \in \overline{\mathcal{C}} $ or, in coordinates, $ \lambda $ is dominant iff $ \langle \alpha, \lambda \rangle_M \geq 0 $ for all $ \alpha \in \Delta_{\finiteweylsymbol} $.

Because of Lemma \ref{Lcompatibilitysquare}, the action of $ \Omega_M \subset \widetilde{W} $ by translations of $ \apartmentsymbol $ agrees with the canonical action of $ \overline{\Omega}_M \subset \widetilde{W} ( \Psi_M ) $ by translations of $ \apartmentsymbol = \overline{\Omega}_M \otimes_{\Z} \R $. This suggests that the same definition should be made for $ \Omega_M $:

\begin{defn}
$ \lambda \in \Omega_M $ is \emph{dominant} iff $ \iota ( \lambda ) \in \overline{\Omega}_M $ is dominant. The subset of all dominant elements is denoted by $ \Omega_M^{\dom} $.
\end{defn}
Note in particular that $ ( \Omega_M )_{\tor} \subset \Omega_M^{\dom} $ by Lemma \ref{LMtorsioninsideGtorsion}.

\begin{lemma} \label{Ldominancefacts}
The following expected facts regarding dominance in $ \Omega_M $ are true:
\begin{enumerate}
\item \label{Ldomrotate} If $ \mu \in \Omega_M $ then there exists $ w \in \finiteweylgroup $ such that $ w ( \mu ) \in  \Omega_M^{\dom} $.

\item \label{Ldomshift} If $ \mu_1, \ldots, \mu_r \in \Omega_M $ then there exists $ \lambda \in \Omega_M^{\dom} $ such that all $ \lambda + \mu_i \in \Omega_M^{\dom} $.

\item \label{Ldomconcat} If $ \lambda, \mu \in \Omega_M^{\dom} $ then $ \ell ( t_{\lambda} \circ t_{\mu} ) = \ell ( t_{\lambda} ) + \ell ( t_{\mu} ) $.

\item \label{Ldomsplit} If $ \lambda \in \Omega_M^{\dom} $ and $ w \in \finiteweylgroup $ then $ \ell ( w \circ t_{\lambda} ) = \ell ( w ) + \ell ( t_{\lambda} ) $.

\item \label{Ldomorbit} If $ \lambda \in \Omega_M $ then $ \ell ( t_{w ( \lambda )} ) = \ell ( t_{\lambda} ) $ for all $ w \in \finiteweylgroup $.
\end{enumerate}
\end{lemma}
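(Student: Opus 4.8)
The plan is to transport all five assertions along the surjective homomorphism $ \iota : \widetilde{W} \twoheadrightarrow \widetilde{W}(\Psi_M) $ of \S\ref{SSvariouscompatibilities}, thereby reducing each to a classical fact about the extended affine Weyl group of the honest reduced root datum $ \Psi_M $. The ingredients that make this legitimate are already recorded: by Lemmas \ref{Lcompatibilitysquare} and \ref{Laffineweylisomtosyntheticaffineweyl}, $ \iota $ carries $ \Omega_M $ onto $ \overline{\Omega}_M $ via the canonical quotient, intertwines the projections onto $ \finiteweylgroup $, and restricts to an isomorphism $ W_{\aff} \directedisom W_{\aff}(\overline{\scaledrootsystemsymbol}) $; by the length corollary of \S\ref{SSvariouscompatibilities}, $ \ell(\iota(w)) = \ell(w) $; and, by definition, $ \lambda \in \Omega_M $ is dominant precisely when $ \iota(\lambda) $ is. Hence $ \iota(w \circ t_{\lambda}) = \iota(w) \circ t_{\iota(\lambda)} $ with lengths and dominance matched on both sides, so each of (1)--(5) is equivalent to its counterpart for $ \widetilde{W}(\Psi_M) $, and it suffices to verify those counterparts.

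For (1) I would invoke that the closed dominant chamber $ \overline{\mathcal{C}} $ is a fundamental domain for $ \finiteweylgroup $ acting on $ \apartmentsymbol $: choosing $ w \in \finiteweylgroup $ with $ w \cdot \big( t_{\iota(\mu)}(\vertexsymbol) \big) \in \overline{\mathcal{C}} $ makes $ w(\iota(\mu)) $, hence $ w(\mu) $, dominant. For (5) and (3) I would use the classical length formula $ \ell(t_{\nu}) = \sum_{\alpha} | \langle \alpha, \nu \rangle_M | $ for translations in $ \widetilde{W}(\Psi_M) $, the sum running over the positive roots of $ \overline{\scaledrootsystemsymbol} $ relative to $ \Delta_{\finiteweylsymbol} $ (Ch.~VI of \cite{bourbaki}, or \cite{lusztig}): this expression is visibly unchanged under $ \nu \mapsto w(\nu) $, which gives (5), and on the dominant cone it equals the linear functional $ \nu \mapsto \langle 2\rho, \nu \rangle_M $ (with $ 2\rho $ the sum of the positive roots), hence is additive there; since $ t_{\lambda} \circ t_{\mu} = t_{\lambda + \mu} $ with $ \lambda + \mu $ again dominant, this yields (3). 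For (4) I would cite the standard identity $ \ell(w \circ t_{\lambda}) = \ell(w) + \ell(t_{\lambda}) $ valid for dominant $ \lambda $ (e.g. via the affine-root-inversion description of $ \ell $ in \cite{bourbaki}/\cite{lusztig}) and pull it back through $ \iota $.

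Statement (2) needs slightly more than pure transport: I must know that $ \overline{\Omega}_M $ itself (not merely $ \cochargroup(A) \otimes_{\Z} \R $) contains a strictly dominant element. I would take $ \lambda_0 $ to be the sum of the positive coroots in $ \overline{\scaledrootsystemsymbol}^{\vee} \subset \overline{\Omega}_M $ (the inclusion is from \S\ref{SSgenuinerootdatum}); since the reflection $ s_{\alpha} $ permutes the positive coroots other than $ \alpha^{\vee} $ for each $ \alpha \in \Delta_{\finiteweylsymbol} $, one computes $ \langle \alpha, \lambda_0 \rangle_M = 2 $, so $ \lambda_0 $ is strictly dominant and $ N\lambda_0 + \iota(\mu_i) $ is dominant for all $ i $ once $ N $ is large. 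Lifting $ N\lambda_0 $ to some $ \lambda \in \Omega_M^{\dom} $ (possible since $ \Omega_M \twoheadrightarrow \overline{\Omega}_M $) then finishes the argument, because $ \iota(\lambda + \mu_i) = N\lambda_0 + \iota(\mu_i) $.

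No step here is deep; the point requiring the most care is the bookkeeping in the first paragraph --- checking simultaneously that $ \iota $ is translation-compatible, $ \finiteweylgroup $-equivariant, length-preserving, and dominance-detecting, so that all five reductions are valid --- together with the one small positive input in (2) exhibiting a strictly dominant element of $ \overline{\Omega}_M $, which is exactly why the inclusion $ \overline{\scaledrootsystemsymbol}^{\vee} \subset \overline{\Omega}_M $ was arranged in \S\ref{SSgenuinerootdatum}.
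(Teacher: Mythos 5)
Your proof is correct and follows essentially the same route as the paper: transport all five claims through $\iota$ using Lemma \ref{Lcompatibilitysquare}, the length corollary, and the definition of dominance, then cite or re-derive the corresponding well-known facts for $\widetilde{W}(\Psi_M)$ (the paper points to the proof of Lemma 3.4, \S1.4(g), \S1.4(f) and \S1.4(a) of \cite{lusztig} for (2)--(5), you instead supply the length-formula computations directly). The only real content beyond bookkeeping --- a strictly dominant element of $\overline{\Omega}_M$ for (2) --- you handle exactly as the cited Lusztig proof does, via $2\rho^{\vee} \in \overline{\scaledrootsystemsymbol}^{\vee} \subset \overline{\Omega}_M$.
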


\begin{proof}
Because of Lemma \ref{Lcompatibilitysquare}, the corollary to Lemma \ref{Laffineweylisomtosyntheticaffineweyl}, and the definition of dominance, it suffices to know these properties for the extended affine Weyl group $ \widetilde{W} ( \Psi_M ) $. These corresponding facts for extended affine Weyl groups are all very well-known, but we make a few additional comments since there seems to be no single convenient reference for all of them. Fact (\ref{Ldomrotate}) is true simply because finite Weyl groups fix the origin $ \vertexsymbol $ and are transitive on Weyl chambers: the transformation of $ \apartmentsymbol $ by $ w ( \mu ) $ is $ w \circ t_{\lambda} \circ w^{-1} $ and transitivity means that $ w \in \finiteweylgroup $ can be chosen so that $ w ( t_{\lambda} ( w^{-1} ( \vertexsymbol ) ) ) \in \overline{\mathcal{C}} $. Facts (\ref{Ldomshift}), (\ref{Ldomconcat}), and (\ref{Ldomsplit}) are true because the analogous facts for $ \widetilde{W} ( \Psi_M ) $ are known (see the proof of Lemma 3.4, \S1.4(g), and \S1.4(f) in \cite{lusztig}). Fact (\ref{Ldomorbit}) also follows from the analogous known fact for $ \widetilde{W} ( \Psi_M ) $, which itself follows immediately from the formula that defines length (see \S1.4(a) of \cite{lusztig}) on an extended affine Weyl group (together with the elementary fact that applying an element of a finite Weyl group to a positive system is the same as multiplying certain of those positive roots by $ -1 $).
\end{proof}

\subsection{Definition and basic properties of the $ \iwahoriheckethetaelementsymbol $-elements}

Recall that for $ w \in \widetilde{W} ( \Psi_M ) $ it is customary to define
\begin{equation*}
\normalizedaffineheckebasissymbol_w \defeq \param ( w )^{-\frac{1}{2}} \affineheckebasissymbol_w
\end{equation*}
and that the $ \theta $-elements that occur in the Bernstein Presentation for $ \mathcal{H} ( \Psi_M, \Delta_{\aff}, \param ) $ are defined as follows: for $ \lambda \in \overline{\Omega}_M $, there exist dominant $ \lambda_1, \lambda_2 \in \overline{\Omega}_M^{\dom} $ such that $ \lambda = \lambda_1 - \lambda_2 $ and
\begin{equation*}
\affineheckethetaelementsymbol_{\lambda} \defeq \normalizedaffineheckebasissymbol_{\lambda_1} \cdot \normalizedaffineheckebasissymbol_{\lambda_2}^{-1}.
\end{equation*}

Similarly,
\begin{defn}
For any $ w \in \widetilde{W} $, define
\begin{equation*}
\normalizediwahoriheckebasissymbol_w \defeq \param ( w )^{-\frac{1}{2}} \iwahoriheckebasissymbol_w
\end{equation*}
For any $ \lambda \in \Omega_M $, let $ \lambda_1, \lambda_2 \in \Omega_M^{\dom} $ be such that $ \lambda = \lambda_1 - \lambda_2 $ (Lemma \ref{Ldominancefacts}(\ref{Ldomshift})) and define
\begin{equation*}
\iwahoriheckethetaelementsymbol_{\lambda} \defeq \normalizediwahoriheckebasissymbol_{\lambda_1} * \normalizediwahoriheckebasissymbol_{\lambda_2}^{-1}
\end{equation*}
Well-definedness follows from Lemma \ref{Ldominancefacts}(\ref{Ldomconcat}) and the fact that $ \Omega_M $ is abelian.
\end{defn}

The following lemma will be used frequently throughout the rest of the paper to transport results from \cite{lusztig} to $ \mathcal{H} ( G ; I ) $:
\begin{lemma} \label{Lthetaelementsarecompatible}
$ \iota_{\mathcal{H}} ( \iwahoriheckethetaelementsymbol_{\lambda} ) = \affineheckethetaelementsymbol_{\iota (\lambda)} $ in $ \mathcal{H} ( \Psi_M, \Delta_{\aff}, \param ) $ for all $ \lambda \in \Omega_M $.
\end{lemma}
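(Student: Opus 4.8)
The plan is to reduce the statement to the already-established fact that $\iota_{\mathcal{H}}$ is a $\C$-algebra homomorphism (Lemma \ref{Lringhomtoreducedheckealgebra}) together with the compatibility of $\iota$ with the two semidirect product decompositions (Lemma \ref{Laffineweylisomtosyntheticaffineweyl} and its corollary), and then to check that the recipe defining $\iwahoriheckethetaelementsymbol_\lambda$ is carried by $\iota_{\mathcal{H}}$ to the recipe defining $\affineheckethetaelementsymbol_{\iota(\lambda)}$.

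First I would fix $\lambda \in \Omega_M$ and write $\lambda = \lambda_1 - \lambda_2$ with $\lambda_1, \lambda_2 \in \Omega_M^{\dom}$, so that $\iwahoriheckethetaelementsymbol_\lambda = \normalizediwahoriheckebasissymbol_{\lambda_1} * \normalizediwahoriheckebasissymbol_{\lambda_2}^{-1}$ by definition. The key points to assemble are: (i) $\iota(\lambda_1), \iota(\lambda_2) \in \overline{\Omega}_M$ are dominant — this is exactly the definition of dominance in $\Omega_M$ in \S\ref{SSdominance}; (ii) $\iota(\lambda) = \iota(\lambda_1) - \iota(\lambda_2)$ since $\iota$ restricted to $\Omega_M$ is the homomorphism $\Omega_M \canarrow \overline{\Omega}_M$; and (iii) $\iota_{\mathcal{H}}(\normalizediwahoriheckebasissymbol_w) = \normalizedaffineheckebasissymbol_{\iota(w)}$ for any $w \in \widetilde{W}$, which follows from $\iota_{\mathcal{H}}(\iwahoriheckebasissymbol_w) = \affineheckebasissymbol_{\iota(w)}$ (the definition of $\iota_{\mathcal{H}}$) together with $\param(w) = \param(\iota(w))$, the latter because the same parameter system $\param$ is used on both sides and $\param$ factors through $W_{\aff} \cong W_{\aff}(\overline{\scaledrootsystemsymbol})$ compatibly with $\iota$ (the corollary to Lemma \ref{Laffineweylisomtosyntheticaffineweyl}).

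With these in hand the computation is short: since $\iota_{\mathcal{H}}$ is a ring homomorphism and sends $\normalizediwahoriheckebasissymbol_{\lambda_1}$ to the invertible element $\normalizedaffineheckebasissymbol_{\iota(\lambda_1)}$ and $\normalizediwahoriheckebasissymbol_{\lambda_2}$ to $\normalizedaffineheckebasissymbol_{\iota(\lambda_2)}$, we get
\begin{equation*}
\iota_{\mathcal{H}}(\iwahoriheckethetaelementsymbol_\lambda) = \iota_{\mathcal{H}}(\normalizediwahoriheckebasissymbol_{\lambda_1}) \cdot \iota_{\mathcal{H}}(\normalizediwahoriheckebasissymbol_{\lambda_2})^{-1} = \normalizedaffineheckebasissymbol_{\iota(\lambda_1)} \cdot \normalizedaffineheckebasissymbol_{\iota(\lambda_2)}^{-1} = \affineheckethetaelementsymbol_{\iota(\lambda_1) - \iota(\lambda_2)} = \affineheckethetaelementsymbol_{\iota(\lambda)},
\end{equation*}
where the third equality is the definition of $\affineheckethetaelementsymbol$ applied to the dominant pair $(\iota(\lambda_1), \iota(\lambda_2))$ (valid by (i)), and the last uses (ii). One small point to be careful about is that $\iota_{\mathcal{H}}$ a priori only sends $\iwahoriheckebasissymbol_w$ to $\affineheckebasissymbol_{\iota(w)}$, whereas $\normalizediwahoriheckebasissymbol_{\lambda_2}^{-1}$ is the inverse in $\mathcal{H}(G;I)$ of $\normalizediwahoriheckebasissymbol_{\lambda_2}$, not a basis element; but since a ring homomorphism carries inverses to inverses, and $\normalizedaffineheckebasissymbol_{\iota(\lambda_2)}$ is genuinely a unit in $\mathcal{H}(\Psi_M, \Delta_{\aff}, \param)$ (dominant $\affineheckebasissymbol$-elements are invertible, by \cite{lusztig}), there is nothing to check beyond invoking Lemma \ref{Lringhomtoreducedheckealgebra}. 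I do not anticipate a real obstacle here; the only thing requiring a line of care is the parameter identity $\param(w) = \param(\iota(w))$, which is needed to know that $\iota_{\mathcal{H}}$ respects the normalization $w \mapsto \param(w)^{-1/2}$, and this is immediate from the fact that $\iota$ restricts to a length-preserving isomorphism on affine Weyl groups while $\Omega_G$-parts contribute trivially to $\param$.
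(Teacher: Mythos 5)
Your proposal is correct and follows essentially the same route as the paper: the paper's proof simply invokes Lemma \ref{Lringhomtoreducedheckealgebra} (that $\iota_{\mathcal{H}}$ is a ring homomorphism sending $\normalizediwahoriheckebasissymbol_{\lambda_i}$ to $\normalizedaffineheckebasissymbol_{\iota(\lambda_i)}$) together with the observation that $\iota\vert_{\Omega_M}$ preserves dominance by definition, and leaves the remaining checks implicit. You have spelled out those implicit checks — the parameter identity $\param(w) = \param(\iota(w))$, the fact that ring homomorphisms carry inverses to inverses, and the additivity $\iota(\lambda)=\iota(\lambda_1)-\iota(\lambda_2)$ — correctly, so your argument is just a more detailed version of the paper's.
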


\begin{proof}
This is obvious since $ \iota_{\mathcal{H}} ( \iwahoriheckethetaelementsymbol_{\lambda} ) = \normalizedaffineheckebasissymbol_{ \iota ( \lambda_1 ) } \cdot \normalizedaffineheckebasissymbol_{ \iota ( \lambda_2 ) }^{-1} $ by Lemma \ref{Lringhomtoreducedheckealgebra} and since $ \iota \vert_{\Omega_M} : \Omega_M \canarrow \overline{\Omega}_M $ preserves dominance by definition.
\end{proof}

Recall that $ \Omega_G ( w ) $ denotes the projection of $ w \in \widetilde{W} $ onto $ \Omega_G^{\prime} $. The following lemma will be used frequently throughout the rest of the paper in combination with the description of $ \kernel ( \iota_{\mathcal{H}} ) $ in Lemma \ref{Lringhomtoreducedheckealgebra}:

\begin{lemma} \label{Lomegacomponentsofthetas}
If $ \mu \in \Omega_M $ and $ s \in \Delta_{\aff} $ then $ \Omega_G ( w ) = \Omega_G ( t_{\mu} ) $ for all $ w \in \widetilde{W} $ supporting $ \iwahoriheckethetaelementsymbol_{\mu} $, or supporting $ \iwahoriheckethetaelementsymbol_{\mu} * \iwahoriheckebasissymbol_s $, or supporting $ \iwahoriheckebasissymbol_s * \iwahoriheckethetaelementsymbol_{\mu} - \iwahoriheckethetaelementsymbol_{s(\mu)} * \iwahoriheckebasissymbol_s $.
\end{lemma}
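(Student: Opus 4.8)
The plan is to exploit that $ w \mapsto \Omega_G ( w ) $ is a group homomorphism $ \widetilde{W} \to \Omega_G^{\prime} $ which is moreover constant on $ \widetilde{W} $-conjugacy classes (both recalled in \S\ref{SSsemidirectproductdecomps}), and to upgrade it to an $ \Omega_G^{\prime} $-grading of the algebra $ \mathcal{H} ( G ; I ) $. First I would check that the decomposition $ \mathcal{H} ( G ; I ) = \bigoplus_{\tau \in \Omega_G^{\prime}} \mathcal{H}_\tau $, where $ \mathcal{H}_\tau \defeq \bigoplus_{\Omega_G ( w ) = \tau} \C \iwahoriheckebasissymbol_w $, satisfies $ \mathcal{H}_{\tau_1} * \mathcal{H}_{\tau_2} \subseteq \mathcal{H}_{\tau_1 \tau_2} $. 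By bilinearity this reduces to $ \iwahoriheckebasissymbol_a * \iwahoriheckebasissymbol_b \in \mathcal{H}_{ \Omega_G ( a ) \Omega_G ( b ) } $ for $ a, b \in \widetilde{W} $: the convolution $ \iwahoriheckebasissymbol_a * \iwahoriheckebasissymbol_b $ is supported on $ I a I b I $, and since $ I \subseteq G(F)_1 = \kernel ( \kappa_G ) $ and $ \kappa_G $ is a homomorphism, every $ g \in I a I b I $ satisfies $ \kappa_G ( g ) = \kappa_G ( a ) \kappa_G ( b ) $ (on representatives); as $ \Omega_G ( w ) $ is, under the identification $ \Omega_G \cong \Omega_G^{\prime} $, exactly $ \kappa_G $ of any representative of $ w $, this forces $ \Omega_G ( w ) = \Omega_G ( a ) \Omega_G ( b ) $ for every $ w $ supporting $ \iwahoriheckebasissymbol_a * \iwahoriheckebasissymbol_b $. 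From the grading I would also record the standard fact that a unit $ h \in \mathcal{H}_\tau $ has $ h^{-1} \in \mathcal{H}_{\tau^{-1}} $ (write $ h^{-1} = \sum_\sigma g_\sigma $ with $ g_\sigma \in \mathcal{H}_\sigma $; multiplying by $ h $ and comparing with $ \iwahoriheckebasissymbol_1 \in \mathcal{H}_1 $, directness of the sum and invertibility of $ h $ kill every $ g_\sigma $ with $ \sigma \neq \tau^{-1} $).

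Next I would read off the three assertions. Since $ \Omega_M \hookrightarrow \widetilde{W} $ is a group homomorphism and $ \Omega_M $ is abelian, $ t_\mu = t_{\mu_1} t_{\mu_2}^{-1} $ whenever $ \mu = \mu_1 - \mu_2 $, hence $ \Omega_G ( t_\mu ) = \Omega_G ( t_{\mu_1} ) \Omega_G ( t_{\mu_2} )^{-1} $; therefore $ \iwahoriheckethetaelementsymbol_\mu = \normalizediwahoriheckebasissymbol_{\mu_1} * \normalizediwahoriheckebasissymbol_{\mu_2}^{-1} \in \mathcal{H}_{ \Omega_G ( t_{\mu_1} ) } * \mathcal{H}_{ \Omega_G ( t_{\mu_2} )^{-1} } \subseteq \mathcal{H}_{ \Omega_G ( t_\mu ) } $, which is the first case (and incidentally re-confirms that $ \Omega_G ( t_\mu ) $ is independent of the chosen $ \mu_1, \mu_2 $). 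For the second, $ s \in \Delta_{\aff} \subseteq W_{\aff} $ forces $ \Omega_G ( s ) = 1 $, so $ \iwahoriheckethetaelementsymbol_\mu * \iwahoriheckebasissymbol_s \in \mathcal{H}_{ \Omega_G ( t_\mu ) } * \mathcal{H}_1 \subseteq \mathcal{H}_{ \Omega_G ( t_\mu ) } $. For the third, likewise $ \iwahoriheckebasissymbol_s * \iwahoriheckethetaelementsymbol_\mu \in \mathcal{H}_{ \Omega_G ( t_\mu ) } $, while $ \iwahoriheckethetaelementsymbol_{ s ( \mu ) } * \iwahoriheckebasissymbol_s \in \mathcal{H}_{ \Omega_G ( t_{ s ( \mu ) } ) } $; because $ t_{ s ( \mu ) } = s \circ t_\mu \circ s^{-1} $ is $ \widetilde{W} $-conjugate to $ t_\mu $ and $ \Omega_G $ is conjugation-invariant, $ \Omega_G ( t_{ s ( \mu ) } ) = \Omega_G ( t_\mu ) $, so both summands — and hence $ \iwahoriheckebasissymbol_s * \iwahoriheckethetaelementsymbol_\mu - \iwahoriheckethetaelementsymbol_{ s ( \mu ) } * \iwahoriheckebasissymbol_s $ — lie in $ \mathcal{H}_{ \Omega_G ( t_\mu ) } $.

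The main obstacle, such as it is, is the first step: that convolution adds $ \Omega_G^{\prime} $-degrees. Everything afterward is bookkeeping with the homomorphism $ \Omega_G $ and with the conjugation-invariance already recorded in \S\ref{SSsemidirectproductdecomps}. The one point deserving comment is the meaning of $ s ( \mu ) $ when $ s \in \Delta_{\aff} $ is an affine (non-finite) reflection: conjugating the translation $ t_\mu $ by any $ s \in W_{\aff} $ is again a translation, namely by the linear part of $ s $ applied to $ \mu $, so $ t_{ s ( \mu ) } $ is unambiguously $ \widetilde{W} $-conjugate to $ t_\mu $ and the third case is unaffected. Invertibility of $ \normalizediwahoriheckebasissymbol_{\mu_2} $, needed to even form $ \iwahoriheckethetaelementsymbol_\mu $, is not an extra burden since it is built into the definition of $ \iwahoriheckethetaelementsymbol_\mu $.
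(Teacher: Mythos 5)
Your proof is correct and is essentially a careful, fully spelled-out version of what the paper treats as immediate from the Iwahori--Matsumoto Presentation and the definition of the $\iwahoriheckethetaelementsymbol$-elements: both arguments amount to observing that $\mathcal{H}(G;I)$ is $\Omega_G^{\prime}$-graded (which you establish directly from $I \subset G(F)_1 = \kernel(\kappa_G)$ and the inclusion of supports in $IaIbI$, rather than via the already-proved presentation), and then reading off the degrees of $\iwahoriheckethetaelementsymbol_{\mu}$, $\iwahoriheckethetaelementsymbol_{\mu} * \iwahoriheckebasissymbol_s$, and the commutator using that units respect the grading and that $\Omega_G(-)$ is a conjugation-invariant homomorphism. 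Your side remark about the meaning of $s(\mu)$ for affine $s \in \Delta_{\aff} \setminus \Delta_{\finiteweylsymbol}$ is a worthwhile clarification, since the paper only defines $w(\lambda)$ for $w \in \finiteweylgroup$ and only invokes the third assertion for $s \in \Delta_{\finiteweylsymbol}$.
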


\begin{proof}
This is immediate from the Iwahori-Matsumoto Presentation and the definition of the $ \Theta $-elements.
\end{proof}

The following fact, Corollary 5.7(1) of \cite{haines1}, will be used in \S\ref{SSbernfunctionsarebasis}:
\begin{lemma} \label{Lhainesfact}
Fix $ \lambda \in \Omega_M $. If $ w $ supports $ \iwahoriheckethetaelementsymbol_{\lambda} $ then $ w \preceq \lambda $.
\end{lemma}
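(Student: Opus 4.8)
The plan is to deduce this from the corresponding statement for the extended affine Hecke algebra $ \mathcal{H} ( \Psi_M, \Delta_{\aff}, \param ) $, namely Corollary 5.7(1) of \cite{haines1} applied to the based reduced root datum $ ( \Psi_M, \Delta_{\aff} ) $ with parameter system $ \param $ (legitimate after the $ v \mapsto \sqrt{q} $ specialization recalled in Remark \ref{Rtensorlusztig}): if $ \bar{w} \in \widetilde{W} ( \Psi_M ) $ supports $ \affineheckethetaelementsymbol_{\nu} $ then $ \bar{w} \preceq t_{\nu} $. The transport is along $ \iota_{\mathcal{H}} $, using $ \iota_{\mathcal{H}} ( \iwahoriheckethetaelementsymbol_{\lambda} ) = \affineheckethetaelementsymbol_{\iota ( \lambda )} $ (Lemma \ref{Lthetaelementsarecompatible}), and the key point is that $ \iota_{\mathcal{H}} $ neither merges nor cancels the terms of $ \iwahoriheckethetaelementsymbol_{\lambda} $.

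First I would record that no-cancellation. By Lemma \ref{Lomegacomponentsofthetas}, every $ w $ supporting $ \iwahoriheckethetaelementsymbol_{\lambda} $ satisfies $ \Omega_G ( w ) = \Omega_G ( t_{\lambda} ) $; since the projection of an element of $ \widetilde{W} = W_{\aff} \rtimes \Omega_G^{\prime} $ onto $ \Omega_G^{\prime} $ is exactly $ \Omega_G ( - ) $, all such $ w $ lie in the single coset $ W_{\aff} \cdot \Omega_G ( t_{\lambda} ) $. As $ \iota $ is injective on $ W_{\aff} $ (Lemma \ref{Laffineweylisomtosyntheticaffineweyl}), it is injective on that coset, so the elements $ \iota ( w ) $, for $ w $ in the support of $ \iwahoriheckethetaelementsymbol_{\lambda} $, are pairwise distinct. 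Writing $ \iwahoriheckethetaelementsymbol_{\lambda} = \sum_w c_w \iwahoriheckebasissymbol_w $, the identity $ \iota_{\mathcal{H}} ( \iwahoriheckethetaelementsymbol_{\lambda} ) = \sum_w c_w \affineheckebasissymbol_{\iota ( w )} $ is therefore already an expansion in distinct basis elements, so each $ \iota ( w ) $ with $ c_w \neq 0 $ supports $ \affineheckethetaelementsymbol_{\iota ( \lambda )} $.

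Now I would fix $ w $ supporting $ \iwahoriheckethetaelementsymbol_{\lambda} $ and apply the quoted statement with $ \nu = \iota ( \lambda ) $: since $ \iota ( t_{\lambda} ) = t_{\iota ( \lambda )} $ (the restriction $ \iota \vert_{\Omega_M} : \Omega_M \canarrow \overline{\Omega}_M $ being the canonical surjection onto the translation lattice of $ \widetilde{W} ( \Psi_M ) $), we get $ \iota ( w ) \preceq \iota ( t_{\lambda} ) $ in $ \widetilde{W} ( \Psi_M ) $. Unwinding the Bruhat order on $ \widetilde{W} ( \Psi_M ) = W_{\aff} ( \overline{\scaledrootsystemsymbol} ) \rtimes \overline{\Omega} $, this says that the $ W_{\aff} ( \overline{\scaledrootsystemsymbol} ) $-components of $ \iota ( w ) $ and $ \iota ( t_{\lambda} ) $ are comparable under $ \preceq $ and that their $ \overline{\Omega} $-components agree. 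Since $ \iota $ carries the decomposition $ W_{\aff} \rtimes \Omega_G^{\prime} $ to $ W_{\aff} ( \overline{\scaledrootsystemsymbol} ) \rtimes \overline{\Omega} $ and restricts to a Coxeter-systems isomorphism $ ( W_{\aff}, \Delta_{\aff} ) \directedisom ( W_{\aff} ( \overline{\scaledrootsystemsymbol} ), \Delta_{\aff} ) $ (Lemma \ref{Laffineweylisomtosyntheticaffineweyl} and the ensuing conventions of \S\ref{SSvariouscompatibilities}), hence an isomorphism of the associated Bruhat orders, this pulls back to: the $ W_{\aff} $-components of $ w $ and $ t_{\lambda} $ are comparable under $ \preceq $ in $ W_{\aff} $. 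Their $ \Omega_G^{\prime} $-components coincide by the previous paragraph. By the definition of $ \preceq $ on $ \widetilde{W} $ (\S\ref{SSsemidirectproductdecomps}), this is exactly $ w \preceq t_{\lambda} $, i.e. $ w \preceq \lambda $.

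The main obstacle is the second paragraph: one may not invoke the known result for $ \affineheckethetaelementsymbol_{\iota ( \lambda )} $ term-by-term until one knows $ \iota_{\mathcal{H}} $ is support-preserving on $ \iwahoriheckethetaelementsymbol_{\lambda} $, and it is precisely Lemma \ref{Lomegacomponentsofthetas} (pinning the $ \Omega_G $-component of the whole support to one value) together with injectivity of $ \iota $ on $ W_{\aff} $ that supplies this. The remaining points — that $ ( \Psi_M, \Delta_{\aff}, \param ) $ meets the hypotheses of \cite{haines1} and that the $ \C[v,v^{-1}] $-versus-$ \C $ discrepancy is absorbed by Remark \ref{Rtensorlusztig} — are routine given \S\ref{SSmaptoreducedheckealgebra}.
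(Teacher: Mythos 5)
Your route is genuinely different from the paper's, but it has a gap at exactly the point you label ``routine,'' and that gap is in fact the substance of the paper's own proof.

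The paper does not transport forward along $\iota_{\mathcal{H}}$. Instead, after noting that the $\Omega_G$-part of the $\Theta$-element can be peeled off by Iwahori--Matsumoto relation (\ref{IMPnormalizer}), it reduces to elements $\iwahoriheckebasissymbol_u * \iwahoriheckebasissymbol_v^{-1}$ with $u,v \in W_{\aff}$ living inside the subalgebra spanned by $\{\iwahoriheckebasissymbol_w : w \in W_{\aff}\}$, which is an ordinary affine Hecke algebra $\mathcal{H}(W_{\aff}(\scaledrootsystemsymbol),\Delta_{\aff},\param)$. Your approach goes the other direction, pushing forward to $\mathcal{H}(\Psi_M,\Delta_{\aff},\param)$, and your no-cancellation argument (support of $\iwahoriheckethetaelementsymbol_\lambda$ lies in one $\Omega_G^{\prime}$-coset by Lemma \ref{Lomegacomponentsofthetas}; $\iota$ is injective on such a coset by Lemma \ref{Laffineweylisomtosyntheticaffineweyl}) is correct and worth recording. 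But the transport does not buy you anything on the real obstacle: Corollary 5.7(1) of \cite{haines1} is stated for \emph{constant} parameter systems, and $\mathcal{H}(\Psi_M,\Delta_{\aff},\param)$ has the same (generally non-constant) $\param$ as $\mathcal{H}(G;I)$. So the quoted result does not directly apply to your target algebra any more than it applies to $\mathcal{H}(G;I)$. Remark \ref{Rtensorlusztig} only handles the $\C[v,v^{-1}]$-versus-$\C$ discrepancy for \cite{lusztig}; it says nothing about the constancy hypothesis in \cite{haines1}, and neither does anything in \S\ref{SSmaptoreducedheckealgebra}. The paper closes this gap by inspecting the proofs: the relevant half of Proposition 5.4 of \cite{haines1} works verbatim with arbitrary parameters and Proposition 5.5 there is purely a Coxeter-group statement. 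Your proof needs that verification (or an alternative one); once it is supplied, your argument goes through, and after that point your transport-plus-injectivity packaging is a legitimate alternative to the paper's reduction to the $W_{\aff}$-subalgebra.
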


\begin{proof}
Superficially, the result in \cite{haines1} applies only to extended affine Hecke algebras on reduced root data with constant parameter systems, but the same proof applies without change to $ \mathcal{H} ( G ; I ) $, as we now explain. By Iwahori-Matsumoto relation (\ref{IMPnormalizer}) and the definition of the $ \Theta $-elements (and how the Bruhat-Chevalley order is extended from $ W_{\aff} $), it suffices to show that if $ u, v \in W_{\aff} $ then $ w \preceq u v^{-1} $ for all $ w $ supporting $ \iwahoriheckebasissymbol_u * \iwahoriheckebasissymbol_{v}^{-1} $ (we are merely repeating an observation from \cite{haines1} here). But the $ \C $-subspace of $ \mathcal{H} ( G ; I ) $ spanned by $ \{ \iwahoriheckebasissymbol_w \suchthat w \in W_{\aff} \} $ is a $ \C $-subalgebra and clearly isomorphic to the affine Hecke algebra $ \mathcal{H} ( W_{\aff} ( \scaledrootsystemsymbol ), \Delta_{\aff}, \param ) $. Therefore, it suffices to see why Corollary 5.7(1) of \cite{haines1} is true using an arbitrary parameter systems. But this is easy to see since the required first half of the proof of Proposition 5.4 in \cite{haines1} works verbatim for an arbitrary choice of parameters and Proposition 5.5 in \cite{haines1} is purely a statement about Coxeter groups.
\end{proof}

\begin{corollary}
Fix $ \mathcal{O} \in \Omega_M / \finiteweylgroup $. If $ \lambda, \mu \in \mathcal{O} $ and $ \lambda \neq \mu $ then $ \iwahoriheckethetaelementsymbol_{\mu} ( \lambda ) = 0 $.
\end{corollary}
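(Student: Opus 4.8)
The plan is to identify $\iwahoriheckethetaelementsymbol_{\mu}(\lambda)$ with a coefficient in the $\iwahoriheckebasissymbol$-basis and then to exclude $\lambda$ from the support of $\iwahoriheckethetaelementsymbol_{\mu}$ by a length comparison. Writing $\iwahoriheckethetaelementsymbol_{\mu} = \sum_{w \in \widetilde{W}} c_w \iwahoriheckebasissymbol_w$, recall that $\iwahoriheckebasissymbol_w = 1_{IwI}$ and that $\widetilde{W}$ is a system of representatives for $I \backslash G(F) / I$; hence a representative of $\lambda$ lies in $IwI$ precisely when $w = \lambda$ in $\widetilde{W}$, and therefore $\iwahoriheckethetaelementsymbol_{\mu}(\lambda) = c_{\lambda}$. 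So the task reduces to showing that $\lambda$ does not support $\iwahoriheckethetaelementsymbol_{\mu}$ when $\lambda \neq \mu$ lie in a common $\finiteweylgroup$-orbit.

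For this I would argue by contradiction: if $\lambda$ supports $\iwahoriheckethetaelementsymbol_{\mu}$, then Lemma \ref{Lhainesfact} (applied with $\mu$ in place of $\lambda$) gives $\lambda \preceq \mu$ in the Bruhat--Chevalley order on $\widetilde{W}$. Since $\lambda$ and $\mu$ are in the same $\finiteweylgroup$-orbit, say $\mu = w(\lambda)$ with $w \in \finiteweylgroup$, Lemma \ref{Ldominancefacts}(\ref{Ldomorbit}) yields $\ell(t_{\mu}) = \ell(t_{\lambda})$. But $\preceq$ on $\widetilde{W}$ is, by definition (see \S\ref{SSsemidirectproductdecomps}), the extension of the Coxeter-group order on $W_{\aff}$ obtained by declaring $(w_1, \tau_1) \preceq (w_2, \tau_2)$ iff $w_1 \preceq w_2$ and $\tau_1 = \tau_2$, with $\ell$ inflated from $W_{\aff}$; consequently $x \preceq y$ with $x \neq y$ forces $\ell(x) < \ell(y)$. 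Hence $t_{\lambda} \preceq t_{\mu}$ together with $\ell(t_{\lambda}) = \ell(t_{\mu})$ forces $t_{\lambda} = t_{\mu}$, i.e. $\lambda = \mu$, contradicting $\lambda \neq \mu$. Thus $c_{\lambda} = 0$ and $\iwahoriheckethetaelementsymbol_{\mu}(\lambda) = 0$.

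I do not anticipate any real obstacle here: once one notes that $\iwahoriheckethetaelementsymbol_{\mu}(\lambda)$ is the coefficient of $\iwahoriheckebasissymbol_{\lambda}$ in $\iwahoriheckethetaelementsymbol_{\mu}$ and that the Bruhat--Chevalley order on $\widetilde{W}$ is strictly length-increasing off the diagonal, the corollary is immediate from Lemmas \ref{Lhainesfact} and \ref{Ldominancefacts}(\ref{Ldomorbit}). The only thing worth spelling out carefully is the extension to $\widetilde{W}$ of the partial order and length function from \S\ref{SSsemidirectproductdecomps}, which is precisely what legitimizes the two-line length comparison.
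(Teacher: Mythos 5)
Your argument is correct and is essentially the paper's own: both rely on Lemma \ref{Lhainesfact} to get $\lambda \preceq \mu$, then use Lemma \ref{Ldominancefacts}(\ref{Ldomorbit}) to get $\ell(t_\lambda) = \ell(t_\mu)$, and conclude $\lambda = \mu$ since the extended Bruhat--Chevalley order is strictly length-increasing off the diagonal within each $\Omega_G$-fiber. Your extra preliminary step identifying $\iwahoriheckethetaelementsymbol_{\mu}(\lambda)$ with the coefficient $c_{\lambda}$ is a correct but implicit observation that the paper leaves unstated.
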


\begin{proof}
If $ \iwahoriheckethetaelementsymbol_{\mu} ( \lambda ) \neq 0 $ then $ \lambda \preceq \mu $ by Lemma \ref{Lhainesfact}. But since $ \lambda $ and $ \mu $ are in the same $ \finiteweylgroup $-orbit, $ \ell ( \lambda ) = \ell ( \mu ) $ (Lemma \ref{Ldominancefacts}(\ref{Ldomorbit})). Together, this means that $ \lambda $ and $ \mu $ have the same $ \Omega_G $-component and the same $ W_{\aff} $-component, i.e. $ \lambda = \mu $.
\end{proof}

\subsection{Proof of the Bernstein relations} \label{SScommutationrelation}

\begin{prop} \label{Pbernsteinadditivityrelation}
For all $ \lambda, \mu \in \Omega_M $,
\begin{equation*}
\iwahoriheckethetaelementsymbol_{\lambda} * \iwahoriheckethetaelementsymbol_{\mu} = \iwahoriheckethetaelementsymbol_{\lambda + \mu}
\end{equation*}
\end{prop}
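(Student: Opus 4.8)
The plan is to reduce the identity, for general $ \lambda, \mu \in \Omega_M $, to the special case in which both $ \lambda $ and $ \mu $ are dominant, where it becomes essentially immediate from the Iwahori-Matsumoto Presentation.

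I would first treat the dominant case. Suppose $ \lambda, \mu \in \Omega_M^{\dom} $. Using the (dominant) decompositions $ \lambda = \lambda - 0 $ and $ \mu = \mu - 0 $ — here $ 0 $ is dominant and $ \normalizediwahoriheckebasissymbol_0 = \iwahoriheckebasissymbol_1 $ is the identity element of $ \mathcal{H} ( G ; I ) $ — one gets $ \iwahoriheckethetaelementsymbol_{\lambda} = \normalizediwahoriheckebasissymbol_{t_{\lambda}} $ and $ \iwahoriheckethetaelementsymbol_{\mu} = \normalizediwahoriheckebasissymbol_{t_{\mu}} $. By Lemma \ref{Ldominancefacts}(\ref{Ldomconcat}), $ \ell ( t_{\lambda} \circ t_{\mu} ) = \ell ( t_{\lambda} ) + \ell ( t_{\mu} ) $. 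Hence the Tits System Axiom — in the form $ \iwahoriheckebasissymbol_u * \iwahoriheckebasissymbol_v = \iwahoriheckebasissymbol_{ u v } $ whenever $ \ell ( u v ) = \ell ( u ) + \ell ( v ) $, a formal consequence of the Iwahori-Matsumoto Presentation (Proposition \ref{Piwahorimatsumotopresentation}) — yields $ \iwahoriheckebasissymbol_{t_{\lambda}} * \iwahoriheckebasissymbol_{t_{\mu}} = \iwahoriheckebasissymbol_{ t_{\lambda + \mu} } $, while the same length additivity, together with the multiplicativity of $ \param $ over reduced words and the conjugation-invariance of $ \param $ (the Remark following Proposition \ref{Piwahorimatsumotopresentation}), gives $ \param ( t_{\lambda} ) \param ( t_{\mu} ) = \param ( t_{\lambda + \mu} ) $. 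Combining, $ \iwahoriheckethetaelementsymbol_{\lambda} * \iwahoriheckethetaelementsymbol_{\mu} = \normalizediwahoriheckebasissymbol_{t_{\lambda}} * \normalizediwahoriheckebasissymbol_{t_{\mu}} = \normalizediwahoriheckebasissymbol_{t_{\lambda + \mu}} = \iwahoriheckethetaelementsymbol_{\lambda + \mu} $. A byproduct, using that $ \Omega_M $ is abelian so that $ \lambda + \mu = \mu + \lambda $, is that the elements $ \{ \iwahoriheckethetaelementsymbol_{\nu} \suchthat \nu \in \Omega_M^{\dom} \} $ commute pairwise, and therefore these elements and their inverses generate a commutative $ \C $-subalgebra of $ \mathcal{H} ( G ; I ) $.

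For arbitrary $ \lambda, \mu $, I would choose dominant decompositions $ \lambda = \lambda_1 - \lambda_2 $ and $ \mu = \mu_1 - \mu_2 $ (Lemma \ref{Ldominancefacts}(\ref{Ldomshift})), so that $ \iwahoriheckethetaelementsymbol_{\lambda} = \iwahoriheckethetaelementsymbol_{\lambda_1} * \iwahoriheckethetaelementsymbol_{\lambda_2}^{-1} $ and $ \iwahoriheckethetaelementsymbol_{\mu} = \iwahoriheckethetaelementsymbol_{\mu_1} * \iwahoriheckethetaelementsymbol_{\mu_2}^{-1} $ by the definition of the $ \iwahoriheckethetaelementsymbol $-elements (and the observation, in the dominant case, that $ \iwahoriheckethetaelementsymbol_{\nu} = \normalizediwahoriheckebasissymbol_{t_{\nu}} $ for dominant $ \nu $). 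All four factors lie in the commutative subalgebra identified above, so the product rearranges as $ \iwahoriheckethetaelementsymbol_{\lambda} * \iwahoriheckethetaelementsymbol_{\mu} = ( \iwahoriheckethetaelementsymbol_{\lambda_1} * \iwahoriheckethetaelementsymbol_{\mu_1} ) * ( \iwahoriheckethetaelementsymbol_{\lambda_2} * \iwahoriheckethetaelementsymbol_{\mu_2} )^{-1} $; applying the dominant case twice, this equals $ \iwahoriheckethetaelementsymbol_{\lambda_1 + \mu_1} * \iwahoriheckethetaelementsymbol_{\lambda_2 + \mu_2}^{-1} $, and since $ \lambda_1 + \mu_1, \lambda_2 + \mu_2 \in \Omega_M^{\dom} $ with $ ( \lambda_1 + \mu_1 ) - ( \lambda_2 + \mu_2 ) = \lambda + \mu $, the well-definedness of the $ \iwahoriheckethetaelementsymbol $-elements identifies it with $ \iwahoriheckethetaelementsymbol_{\lambda + \mu} $.

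The only point needing genuine care is the rearrangement in the last step, which rests on the pairwise commutativity of the dominant $ \iwahoriheckethetaelementsymbol $-elements; note there is no circularity, as that commutativity is deduced from the dominant case of the identity, not the general one. Alternatively — and consistently with the transport strategy used throughout this section — one can bypass the bookkeeping altogether: since $ \affineheckethetaelementsymbol_a \cdot \affineheckethetaelementsymbol_b = \affineheckethetaelementsymbol_{a+b} $ holds in $ \mathcal{H} ( \Psi_M, \Delta_{\aff}, \param ) $ by \cite{lusztig}, Lemma \ref{Lthetaelementsarecompatible} shows $ \iwahoriheckethetaelementsymbol_{\lambda} * \iwahoriheckethetaelementsymbol_{\mu} - \iwahoriheckethetaelementsymbol_{\lambda + \mu} \in \kernel ( \iota_{\mathcal{H}} ) $, and since (by the same reasoning as in Lemma \ref{Lomegacomponentsofthetas}) every $ w $ supporting either $ \iwahoriheckethetaelementsymbol_{\lambda} * \iwahoriheckethetaelementsymbol_{\mu} $ or $ \iwahoriheckethetaelementsymbol_{\lambda + \mu} $ has $ \Omega_G $-component equal to $ \Omega_G ( t_{\lambda + \mu} ) $, the description of $ \kernel ( \iota_{\mathcal{H}} ) $ in Lemma \ref{Lringhomtoreducedheckealgebra} forces the difference to be $ 0 $.
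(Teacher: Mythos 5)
Your primary argument is essentially the paper's proof. Both choose dominant decompositions $ \lambda = \lambda_1 - \lambda_2 $, $ \mu = \mu_1 - \mu_2 $ and rely on the same two inputs: length additivity for dominant translations (Lemma \ref{Ldominancefacts}(\ref{Ldomconcat})), which via the Tits System Axiom and the multiplicativity and conjugation-invariance of $ \param $ gives $ \iwahoriheckebasissymbol_{\nu} * \iwahoriheckebasissymbol_{\nu^{\prime}} = \iwahoriheckebasissymbol_{\nu + \nu^{\prime}} $ and $ \param(\nu)\param(\nu^{\prime}) = \param(\nu + \nu^{\prime}) $ for dominant $ \nu, \nu^{\prime} $; and abelianness of $ \Omega_M $, which yields the commutation of $ \normalizediwahoriheckebasissymbol_{\lambda_2}^{-1} $ with $ \normalizediwahoriheckebasissymbol_{\mu_1} $. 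The only difference is bookkeeping: you first prove the dominant case, observe it yields a commutative subalgebra, and then rearrange inside that subalgebra; the paper performs the one needed commutation inline. Your alternative route via $ \iota_{\mathcal{H}} $ and $ \kernel ( \iota_{\mathcal{H}} ) $ is also valid and is precisely the transport technique the paper reserves for the commutation relation (Proposition \ref{Pbernsteincommutationrelation}) and for centrality (Proposition \ref{Pweylorbitfunctionsarecentral}); Lusztig's additivity $ \affineheckethetaelementsymbol_a \cdot \affineheckethetaelementsymbol_b = \affineheckethetaelementsymbol_{a+b} $ in $ \mathcal{H} ( \Psi_M, \Delta_{\aff}, \param ) $ is an independent known input, so there is no circularity. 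The paper's preference for the direct argument here is natural since it is shorter and since the $ \mathcal{H} ( \Psi_M, \Delta_{\aff}, \param ) $-side additivity is needed anyway to expand Lusztig's relation into the form (\ref{Eoldbernsteincommutationrelation}). One point to make explicit in the transport version: Lemma \ref{Lomegacomponentsofthetas} as stated does not list $ \iwahoriheckethetaelementsymbol_{\lambda} * \iwahoriheckethetaelementsymbol_{\mu} $, but, as you indicate, the same reasoning extends because the $ \Omega_G $-component of a convolution of basis elements multiplies, giving $ \Omega_G ( t_{\lambda} ) \Omega_G ( t_{\mu} ) = \Omega_G ( t_{\lambda + \mu} ) $ uniformly on the support.
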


\begin{proof}
Suppose $ \lambda_1, \lambda_2, \mu_1, \mu_2 \in \Omega_M^{\dom} $ are such that $ \lambda = \lambda_1 - \lambda_2 $ and $ \mu = \mu_1 - \mu_2 $. Since $ \ell ( t_{\lambda} \circ t_{\mu} ) = \ell ( \lambda ) + \ell ( \mu ) $ for all $ \lambda, \mu \in \Omega_M^{\dom} $ (Lemma \ref{Ldominancefacts}(\ref{Ldomconcat})) and since $ \Omega_M $ is abelian, $ \iwahoriheckebasissymbol_{\lambda_2}^{-1} * \iwahoriheckebasissymbol_{\mu_1} = \iwahoriheckebasissymbol_{\mu_1} * \iwahoriheckebasissymbol_{\lambda_2}^{-1} $. The additivity of lengths also implies that $ \iwahoriheckebasissymbol_{\lambda_i} * \iwahoriheckebasissymbol_{\mu_i} = \iwahoriheckebasissymbol_{\lambda_i + \mu_i} $ and that $ \param ( \lambda_i ) \param ( \mu_i ) = \param ( \lambda_i + \mu_i ) $. Combining all this proves the claim since $ \lambda_i + \mu_i \in \Omega_M^{\dom} $ and $ \lambda + \mu = ( \lambda_1 + \mu_1 ) - ( \lambda_2 + \mu_2 ) $.
\end{proof}

Denote by $ s \mapsto \widetilde{s} $ the involution on $ \Delta_{\aff} $ (only non-trivial for reflections coming from type $ C $ subsystems) described in \S2.4 of \cite{lusztig} and define
\begin{align*}
\indexedparam{0}{s} &\defeq \param ( s ) - 1 \\
\indexedparam{1}{s} &\defeq \param ( s )^{\frac{1}{2}} \param ( \widetilde{s} )^{\frac{1}{2}} - \param ( s )^{\frac{1}{2}} \param ( \widetilde{s} )^{-\frac{1}{2}}
\end{align*}
For any $ \jmath \in \Z $, denote by $ \overline{\jmath} \in \{ 0, 1 \} $ its remainder mod $ 2 $. Note that if $ \param ( s ) = \param ( \widetilde{s} ) $ then $ \indexedparam{0}{s} = \indexedparam{1}{s} $.

Recall from Proposition 3.6 of \cite{lusztig} that there is a commutation relation for the extended affine Hecke algebra $ \mathcal{H} ( \Psi_M, \Delta_{\aff}, \param ) $: if $ x \in \overline{\Omega}_M $ and $ s \in \Delta_{\finiteweylsymbol} $ ($ s = s_{\alpha} $, $ \alpha \in \overline{\scaledrootsystemsymbol} $) then
\begin{equation} \label{Eoldbernsteincommutationrelation}
\affineheckebasissymbol_s \cdot \affineheckethetaelementsymbol_{x} - \affineheckethetaelementsymbol_{s(x)} \cdot \affineheckebasissymbol_s = \sum_{\jmath=0}^{\langle \alpha, x \rangle_M - 1} \indexedparam{\overline{\jmath}}{s} \affineheckethetaelementsymbol_{ x - \jmath \alpha^{\vee} }
\end{equation}
(note that the elements $ x - \jmath \alpha^{\vee} $ make sense because, from \S\ref{SSgenuinerootdatum}, $ \overline{\scaledrootsystemsymbol}^{\vee} \subset \overline{\Omega}_M $). To expand the relation appearing in \cite{lusztig} to the form (\ref{Eoldbernsteincommutationrelation}) that is more convenient here, use the formal identities
\begin{align*}
\affineheckethetaelementsymbol_x - \affineheckethetaelementsymbol_{s(x)} &= \affineheckethetaelementsymbol_x \cdot ( 1 - \affineheckethetaelementsymbol_{ - \langle \alpha, x \rangle \alpha^{\vee} } ) \\
1 - \affineheckethetaelementsymbol_{- k n \alpha^{\vee}} &= ( 1 - \affineheckethetaelementsymbol_{- k \alpha^{\vee} } ) \cdot ( 1 + \affineheckethetaelementsymbol_{ - k \alpha^{\vee} } + \cdots + \affineheckethetaelementsymbol_{- k ( n - 1 ) \alpha^{\vee} } )
\end{align*}
that result from the additivity relation (Proposition \ref{Pbernsteinadditivityrelation}) above ($ k = 1, 2 $ and $ n $ should be such that $ k n = \langle \alpha, x \rangle_M $ and $ k = 2 $ iff $ \alpha \in 2 \overline{\Omega}_M^{\vee} $).


\begin{remark}
Relation (\ref{Eoldbernsteincommutationrelation}) is not literally Proposition 3.6 of \cite{lusztig}, but instead is the image of that relation under specialization $ v \mapsto \sqrt{q} $. See Remark \ref{Rtensorlusztig}.
\end{remark}

\begin{prop} \label{Pbernsteincommutationrelation}
Fix $ s \in \Delta_{\finiteweylsymbol} $ and $ \lambda \in \Omega_M $. Let $ \alpha \in \scaledrootsystemsymbol $ be such that $ s = s_{\alpha} $ and set $ N \defeq \langle \alpha, \iota ( \lambda ) \rangle_M \defeq \langle \alpha, \lambda \rangle_{\R} $. Then
\begin{equation} \label{Enewbernsteincommutationrelation}
\iwahoriheckebasissymbol_s * \iwahoriheckethetaelementsymbol_{\lambda} - \iwahoriheckethetaelementsymbol_{s(\lambda)} * \iwahoriheckebasissymbol_s = \sum_{\jmath=0}^{N-1} \indexedparam{\overline{\jmath}}{s} \iwahoriheckethetaelementsymbol_{ \lambda - \jmath \alpha^{\vee} }
\end{equation}
\end{prop}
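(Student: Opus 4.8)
The plan is to transport the commutation relation \eqref{Eoldbernsteincommutationrelation}, which is known for $\mathcal{H}(\Psi_M, \Delta_{\aff}, \param)$ by \cite{lusztig}, back to $\mathcal{H}(G;I)$ along the surjection $\iota_{\mathcal{H}}$, exactly as advertised in the introduction. Concretely, set
\begin{equation*}
D \defeq \iwahoriheckebasissymbol_s * \iwahoriheckethetaelementsymbol_{\lambda} - \iwahoriheckethetaelementsymbol_{s(\lambda)} * \iwahoriheckebasissymbol_s - \sum_{\jmath=0}^{N-1} \indexedparam{\overline{\jmath}}{s} \iwahoriheckethetaelementsymbol_{ \lambda - \jmath \alpha^{\vee} }.
\end{equation*}
First I would apply $\iota_{\mathcal{H}}$. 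Using that $\iota_{\mathcal{H}}$ is a $\C$-algebra homomorphism (Lemma~\ref{Lringhomtoreducedheckealgebra}), that $\iota_{\mathcal{H}}(\iwahoriheckebasissymbol_s) = \affineheckebasissymbol_s$ for $s \in \Delta_{\aff}$, that $\iota_{\mathcal{H}}(\iwahoriheckethetaelementsymbol_{\mu}) = \affineheckethetaelementsymbol_{\iota(\mu)}$ (Lemma~\ref{Lthetaelementsarecompatible}), and that $\iota$ is equivariant for the $\finiteweylgroup$-actions on $\Omega_M$ and $\overline{\Omega}_M$ and sends $\alpha^{\vee} \in \scaledrootsystemsymbol^{\vee}$ to $\alpha^{\vee} \in \overline{\scaledrootsystemsymbol}^{\vee}$ (the two root systems being literally identified in \S\ref{SSgenuinerootdatum}), one sees that $\iota_{\mathcal{H}}(D)$ is precisely the left-minus-right of \eqref{Eoldbernsteincommutationrelation} with $x = \iota(\lambda)$; here one uses that $\langle \alpha, \iota(\lambda)\rangle_M = \langle \alpha, \lambda\rangle_{\R} = N$ by the very definition of $\langle -,-\rangle_M$ and of $N$, and that the involution $s \mapsto \widetilde{s}$ and hence the coefficients $\indexedparam{\overline{\jmath}}{s}$ are computed from the same parameter system $\param$ on $\Delta_{\aff}$ for both algebras. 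Therefore $\iota_{\mathcal{H}}(D) = 0$, i.e. $D \in \kernel(\iota_{\mathcal{H}})$.

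Next I would pin down $D = 0$ using the last clause of Lemma~\ref{Lringhomtoreducedheckealgebra}: it suffices to check that every $w \in \widetilde{W}$ supporting $D$ has the same projection $\Omega_G(w)$ onto $\Omega_G^{\prime}$. This is exactly what Lemma~\ref{Lomegacomponentsofthetas} provides: each of the three kinds of summand occurring in $D$ — the commutator $\iwahoriheckebasissymbol_s * \iwahoriheckethetaelementsymbol_{\lambda} - \iwahoriheckethetaelementsymbol_{s(\lambda)} * \iwahoriheckebasissymbol_s$, and the $\iwahoriheckethetaelementsymbol_{\lambda - \jmath\alpha^{\vee}}$ for $0 \le \jmath \le N-1$ — is supported only on elements $w$ with $\Omega_G(w) = \Omega_G(t_{\lambda})$. (For the terms $\iwahoriheckethetaelementsymbol_{\lambda - \jmath\alpha^{\vee}}$ one uses that $\alpha^{\vee} \in \scaledrootsystemsymbol^{\vee} \subset W_{\aff}$ is a pure translation lying in the affine-Weyl part, so $t_{\lambda - \jmath\alpha^{\vee}}$ has the same $\Omega_G$-component as $t_{\lambda}$, and then Lemma~\ref{Lomegacomponentsofthetas} again.) Hence all $w$ supporting $D$ share the common value $\Omega_G(t_{\lambda})$, and Lemma~\ref{Lringhomtoreducedheckealgebra} forces $D = 0$, which is the claimed identity \eqref{Enewbernsteincommutationrelation}.

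One should also address the case $N < 0$: when $\langle \alpha, \lambda\rangle_{\R} < 0$ the empty-looking sum $\sum_{\jmath=0}^{N-1}$ is, by the standard convention in \cite{lusztig} (and as for \eqref{Eoldbernsteincommutationrelation}), the negative of $\sum_{\jmath=N}^{-1}$, and the argument above goes through unchanged since all the $\iwahoriheckethetaelementsymbol$-subscripts involved still differ from $\lambda$ by multiples of $\alpha^{\vee}$; alternatively one can apply the $N \ge 0$ case with $s(\lambda)$ in place of $\lambda$ and conjugate. The main obstacle is not any single deep step but the bookkeeping in the first paragraph: verifying that $\iota_{\mathcal{H}}$ really carries $D$ to the exact left-minus-right of Lusztig's relation, i.e. checking that every ingredient ($\finiteweylgroup$-action, the identification $\scaledrootsystemsymbol^{\vee} = \overline{\scaledrootsystemsymbol}^{\vee}$, the pairing values $N$, the coefficients $\indexedparam{\overline{\jmath}}{s}$, and the involution $s \mapsto \widetilde{s}$) is transported compatibly; once that is in hand, the passage from $D \in \kernel(\iota_{\mathcal{H}})$ to $D = 0$ is immediate from Lemmas~\ref{Lringhomtoreducedheckealgebra} and \ref{Lomegacomponentsofthetas}.
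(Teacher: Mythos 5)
Your proposal is correct and follows essentially the same route as the paper's proof: push the difference through $\iota_{\mathcal{H}}$ to land in the kernel using Lemmas~\ref{Lringhomtoreducedheckealgebra} and \ref{Lthetaelementsarecompatible}, then use Lemma~\ref{Lomegacomponentsofthetas} (together with $\scaledrootsystemsymbol^{\vee} \subset W_{\aff}$) to check that the $\Omega_G$-component is constant on the support, which forces the difference to vanish. The extra remarks about the $N<0$ convention and the careful bookkeeping are sound but not a departure from the paper's argument.
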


\begin{remark}
Recall that if $ \alpha \in \scaledrootsystemsymbol $ then $ \alpha^{\vee} $ may be merely a \emph{rational} multiple of some ``true'' coroot in $ \relativerootsystem^{\vee} $.
\end{remark}

\begin{proof}
Define $ \mathbf{\Theta} $ to be the right-hand-side of (\ref{Enewbernsteincommutationrelation}). By Lemmas \ref{Lthetaelementsarecompatible} and \ref{Lringhomtoreducedheckealgebra}, $ \iota_{\mathcal{H}} ( \iwahoriheckebasissymbol_s * \iwahoriheckethetaelementsymbol_{\lambda} - \iwahoriheckethetaelementsymbol_{s(\lambda)} * \iwahoriheckebasissymbol_s ) $ and $ \iota_{\mathcal{H}} ( \mathbf{\Theta} ) $ are the left-hand-side and right-hand-side of (\ref{Eoldbernsteincommutationrelation}) for $ x \defeq \iota ( \lambda ) $, so $ \iwahoriheckebasissymbol_s * \iwahoriheckethetaelementsymbol_{\lambda} - \iwahoriheckethetaelementsymbol_{s(\lambda)} * \iwahoriheckebasissymbol_s - \mathbf{\Theta} \in \kernel ( \iota_{\mathcal{H}} ) $. By Lemma \ref{Lringhomtoreducedheckealgebra}, the desired relation (\ref{Enewbernsteincommutationrelation}) will follow immediately if it is verified that $ \Omega_G ( w ) $ is the same for all $ w $ supporting $ \iwahoriheckebasissymbol_s * \iwahoriheckethetaelementsymbol_{\lambda} - \iwahoriheckethetaelementsymbol_{s(\lambda)} * \iwahoriheckebasissymbol_s - \mathbf{\Theta} $, which we now do. Since translations by the coroot system $ \scaledrootsystemsymbol^{\vee} $ are elements of $ W_{\aff} $, it is clear that $ \Omega_G ( \lambda - \jmath \alpha^{\vee} ) $ is the same for all $ \jmath $, so Lemma \ref{Lomegacomponentsofthetas} implies that $ \Omega_G ( w ) = \Omega_G ( \lambda ) $ for all $ w $ supporting $ \mathbf{\Theta} $. By Lemma \ref{Lomegacomponentsofthetas} again, $ \Omega_G ( w ) = \Omega_G ( \lambda ) $ for all $ w $ supporting $ \iwahoriheckebasissymbol_s * \iwahoriheckethetaelementsymbol_{\lambda} - \iwahoriheckethetaelementsymbol_{s(\lambda)} * \iwahoriheckebasissymbol_s $ also.
\end{proof}

\subsection{The Bernstein products are a basis}

\begin{prop} \label{Pnoncentralbernsteinbasis}
The collection $ \{ \iwahoriheckethetaelementsymbol_{\lambda} * \iwahoriheckebasissymbol_w \suchthat \lambda \in \Omega_M, w \in \finiteweylgroup \} $ is a $ \C $-linear basis for $ \mathcal{H} ( G ; I ) $.
\end{prop}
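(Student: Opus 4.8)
The plan is to prove this by transporting the analogous statement for the extended affine Hecke algebra $\mathcal{H}(\Psi_M, \Delta_{\aff}, \param)$ through the homomorphism $\iota_{\mathcal{H}}$, using a counting/dimension argument to upgrade a spanning statement to a basis statement. By \cite{lusztig} (see \S3.3 there), the set $\{\affineheckethetaelementsymbol_{\lambda} \cdot \affineheckebasissymbol_w \suchthat \lambda \in \overline{\Omega}_M, w \in \finiteweylgroup\}$ is a $\C$-linear basis for $\mathcal{H}(\Psi_M, \Delta_{\aff}, \param)$, and by Lemma \ref{Lthetaelementsarecompatible} together with Lemma \ref{Lringhomtoreducedheckealgebra} we have $\iota_{\mathcal{H}}(\iwahoriheckethetaelementsymbol_{\lambda} * \iwahoriheckebasissymbol_w) = \affineheckethetaelementsymbol_{\iota(\lambda)} \cdot \affineheckebasissymbol_{\iota(w)}$ for all $\lambda \in \Omega_M$ and $w \in \finiteweylgroup$ (here $\finiteweylgroup$ maps identically to $\finiteweylgroup(\overline{\scaledrootsystemsymbol})$).

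First I would establish \textbf{spanning}. Let $V$ be the $\C$-span of $\{\iwahoriheckethetaelementsymbol_{\lambda} * \iwahoriheckebasissymbol_w\}$. The Bernstein additivity relation (Proposition \ref{Pbernsteinadditivityrelation}) and commutation relation (Proposition \ref{Pbernsteincommutationrelation}) show that $V$ is closed under left multiplication by $\iwahoriheckebasissymbol_s$ for $s \in \Delta_{\finiteweylsymbol}$ and under multiplication by the $\iwahoriheckethetaelementsymbol_{\mu}$, and the Iwahori-Matsumoto relations handle multiplication by $\iwahoriheckebasissymbol_{\tau}$ for $\tau \in \Omega_G$; since $\iwahoriheckebasissymbol_1 \in V$ and these generate $\mathcal{H}(G;I)$ as an algebra, it would suffice to observe that every $\iwahoriheckebasissymbol_w$ for $w \in W_{\aff}$ (hence every $\iwahoriheckebasissymbol_w$ for $w \in \widetilde{W}$, using $\Omega_G$-translates) lies in $V$. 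This last point follows because $W_{\aff}$ is generated by $\Delta_{\aff}$, and each $\iwahoriheckebasissymbol_s$ for $s \in \Delta_{\aff}$ is either in $\Delta_{\finiteweylsymbol}$ or of the form $\iwahoriheckethetaelementsymbol_{\alpha^\vee} * (\text{element already in } V)$ up to the commutation relation --- more cleanly, one can invoke Lemma \ref{Lhainesfact} and induct on Bruhat length: $\iwahoriheckebasissymbol_\lambda$ differs from $\iwahoriheckethetaelementsymbol_\lambda$ (up to the scalar $\param(\lambda)^{1/2}$) by a combination of $\iwahoriheckethetaelementsymbol_\mu$ with $\mu \prec \lambda$, which are in $V$ by induction, and then general $\iwahoriheckebasissymbol_w$ are recovered from the $\iwahoriheckebasissymbol_\lambda$ and $\iwahoriheckebasissymbol_{w'}$, $w' \in \finiteweylgroup$, via Iwahori-Matsumoto relation (\ref{IMPbraid}) and Lemma \ref{Ldominancefacts}(\ref{Ldomsplit}).

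For \textbf{linear independence}, suppose $\sum_{\lambda, w} c_{\lambda,w}\, \iwahoriheckethetaelementsymbol_{\lambda} * \iwahoriheckebasissymbol_w = 0$ with finitely many nonzero $c_{\lambda,w}$. Group the terms according to the value of $\Omega_G(t_\lambda)$: by Lemma \ref{Lomegacomponentsofthetas} each $\iwahoriheckethetaelementsymbol_\lambda * \iwahoriheckebasissymbol_w$ is supported on $w' \in \widetilde{W}$ with $\Omega_G(w') = \Omega_G(t_\lambda)$ (the right factor $\iwahoriheckebasissymbol_w$ with $w \in \finiteweylgroup$ does not change the $\Omega_G$-component since $\finiteweylgroup \subset W_{\aff} \cdot \Omega_G$ with trivial $\Omega_G$-part --- more precisely one uses that the $\Omega_G$-projection is a homomorphism and $\finiteweylgroup$ lands in $W_{\aff}$ up to $\Omega_G$-torsion, which is exactly the content needed and follows from Lemma \ref{LMtorsioninsideGtorsion}). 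Hence terms with distinct $\Omega_G$-components are supported on disjoint sets of basis vectors $\iwahoriheckebasissymbol_{w'}$, so the relation splits into separate relations, one for each fixed $\Omega_G$-component $\tau \in \Omega_G^{\prime}$. Within such a block, every supporting element $w'$ has $\Omega_G(w') = \tau$, so Lemma \ref{Lringhomtoreducedheckealgebra} tells us that $\iota_{\mathcal{H}}$ is injective on that block; applying $\iota_{\mathcal{H}}$ sends the block relation to $\sum c_{\lambda,w}\, \affineheckethetaelementsymbol_{\iota(\lambda)} \cdot \affineheckebasissymbol_{\iota(w)} = 0$ in $\mathcal{H}(\Psi_M, \Delta_{\aff}, \param)$. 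Since $\iota|_{\Omega_M}: \Omega_M \to \overline{\Omega}_M$ has kernel exactly $(\Omega_M)_{\tor} \subset \Omega_M^{\dom}$ and $\iwahoriheckethetaelementsymbol_\mu = \iwahoriheckebasissymbol_\mu$ for torsion $\mu$, one must check that distinct pairs $(\lambda, w)$ with the same $\Omega_G$-component map to distinct pairs $(\iota(\lambda), \iota(w))$ --- equivalently that $\iota$ is injective on each $\Omega_G$-fiber of $\Omega_M$, which again is Lemma \ref{LMtorsioninsideGtorsion} (the torsion of $\Omega_M$ injects into that of $\Omega_G$, so two elements of $\Omega_M$ with the same image in $\overline{\Omega}_M$ and the same $\Omega_G$-component are equal). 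Then linear independence of the Bernstein basis of $\mathcal{H}(\Psi_M, \Delta_{\aff}, \param)$ from \cite{lusztig} forces all $c_{\lambda,w} = 0$.

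The step I expect to be the main obstacle is the careful bookkeeping of $\Omega_G$-components: one must verify precisely that right multiplication by $\iwahoriheckebasissymbol_w$ with $w \in \finiteweylgroup$ preserves the $\Omega_G^{\prime}$-projection of the support (so that the relation genuinely decomposes into $\iota_{\mathcal{H}}$-injective blocks), and that $\iota$ restricted to a single $\Omega_G$-fiber of $\Omega_M$ is injective on those elements appearing. Both reduce to Lemmas \ref{LMtorsioninsideGtorsion} and \ref{Lomegacomponentsofthetas} plus the fact that $\Omega_G(-)$ is constant on conjugacy classes and multiplicative, but assembling them without circularity (the spanning argument uses Lemma \ref{Lhainesfact}, which presupposes the $\iwahoriheckethetaelementsymbol$ are well-behaved) requires some care in ordering the inductions.
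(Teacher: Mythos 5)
Your linear-independence argument matches the paper's: group terms by the $\Omega_G^{\prime}$-projection of the support (using Lemma~\ref{Lomegacomponentsofthetas} and the fact that $\mathcal{T}_w$, $w\in\finiteweylgroup$, is supported in $W_{\aff}$), apply $\iota_{\mathcal{H}}$ within a block, and then invoke Lusztig's basis result together with Lemma~\ref{LMtorsioninsideGtorsion} to see that distinct pairs $(\lambda_i,w_i)$ remain distinct after applying $\iota$. That part is fine and essentially identical to the paper.

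The spanning argument, however, has a genuine gap where you invoke Lemma~\ref{Lhainesfact}. You assert that ``$\iwahoriheckebasissymbol_\lambda$ differs from $\iwahoriheckethetaelementsymbol_\lambda$ (up to the scalar $\param(\lambda)^{1/2}$) by a combination of $\iwahoriheckethetaelementsymbol_\mu$ with $\mu \prec \lambda$.'' Lemma~\ref{Lhainesfact} does not deliver this: it gives the $\iwahoriheckebasissymbol$-expansion $\iwahoriheckethetaelementsymbol_\lambda = \sum_{w\preceq\lambda} a_w\,\iwahoriheckebasissymbol_w$, and the lower-order terms $\iwahoriheckebasissymbol_w$ run over \emph{all} $w\prec\lambda$ in $\widetilde{W}$, not only translations. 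Inverting therefore does not express $\iwahoriheckebasissymbol_\lambda$ as a $\iwahoriheckethetaelementsymbol$-combination of translations --- and indeed such an expression would say that $\iwahoriheckebasissymbol_\lambda$ lies in the commutative subalgebra spanned by the $\iwahoriheckethetaelementsymbol_\mu$'s, which is generally false. Your alternative route (closure of the span $V$ under left multiplication by generators) is also incomplete: you handle $\iwahoriheckebasissymbol_s$ for $s\in\Delta_\circ$ via the commutation relation and $\iwahoriheckethetaelementsymbol_\mu$ via additivity, but the simple reflections $s\in\Delta_{\aff}\setminus\Delta_\circ$ and the elements $\iwahoriheckebasissymbol_\tau$, $\tau\in\Omega_G$, are not dispatched; in particular $\Omega_G$ does not in general normalize $\finiteweylgroup$, so absorbing $\iwahoriheckebasissymbol_\tau$ into the Bernstein products is not an automatic consequence of Iwahori--Matsumoto relation~(\ref{IMPnormalizer}).

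The paper's spanning argument sidesteps all of this. It first proves that every $\iwahoriheckebasissymbol_w$, $w\in\widetilde{W}$, lies in the span of the ``triple products'' $\iwahoriheckebasissymbol_u * \iwahoriheckebasissymbol_\lambda * \iwahoriheckebasissymbol_v$ with $u,v\in\finiteweylgroup$ and $\lambda\in\Omega_M^{\dom}$: writing $w=(\mu,x)\in\Omega_M\rtimes\finiteweylgroup$, one uses Lemma~\ref{Ldominancefacts}(\ref{Ldomrotate}) and (\ref{Ldomsplit}) to hit $\iwahoriheckebasissymbol_{(\mu,u)}$ exactly via $\iwahoriheckebasissymbol_u*\iwahoriheckebasissymbol_\lambda$, and then multiplies on the right by $\iwahoriheckebasissymbol_v$, inducting on $\ell(v)$ to handle the lower-order $\iwahoriheckebasissymbol_{(\mu,uv')}$ that appear. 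Finally, since $\normalizediwahoriheckebasissymbol_\lambda = \iwahoriheckethetaelementsymbol_\lambda$ for dominant $\lambda$, repeated use of the commutation relation (Proposition~\ref{Pbernsteincommutationrelation}) rewrites $\iwahoriheckebasissymbol_u * \iwahoriheckethetaelementsymbol_\lambda$ as a combination of $\iwahoriheckethetaelementsymbol_x * \iwahoriheckebasissymbol_w$, and multiplying by $\iwahoriheckebasissymbol_v$ on the right stays in the Bernstein span. This does not require Lemma~\ref{Lhainesfact} at all and handles the $\Omega_G$- and $\Delta_{\aff}\setminus\Delta_\circ$-bookkeeping for free via the semidirect product $\widetilde{W}=\Omega_M\rtimes\finiteweylgroup$.
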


\begin{proof}
We first prove linear-independence. Suppose $ ( \lambda_1, w_1 ), \ldots, ( \lambda_r, w_r ) \in \Omega_M \times \finiteweylgroup $ (distinct pairs) and $ c_1, \ldots, c_r \in \C $ are such that $ c_1 \iwahoriheckethetaelementsymbol_{\lambda_1} * \iwahoriheckebasissymbol_{w_1} + \cdots + c_r \iwahoriheckethetaelementsymbol_{\lambda_r} * \iwahoriheckebasissymbol_{w_r} = 0 $ in $ \mathcal{H} ( G ; I ) $. For each $ i $, Lemma \ref{Lomegacomponentsofthetas} says that $ \Omega_G ( w ) = \Omega_G ( \lambda_i ) $ for all $ w $ supporting $ \iwahoriheckethetaelementsymbol_{\lambda_i} * \iwahoriheckebasissymbol_{w_i} $. Therefore, we may assume that $ \Omega_G ( \lambda_i ) $ is the same for all $ i $ (otherwise the left-hand-side of the dependence relation would be a sum of multiple functions with \emph{disjoint} supports). Apply $ \iota_{\mathcal{H}} $ to the dependence relation and use Lemmas \ref{Lringhomtoreducedheckealgebra} and \ref{Lthetaelementsarecompatible} to get $ c_1 \affineheckethetaelementsymbol_{ \iota ( \lambda_1 ) } \cdot \affineheckebasissymbol_{w_1} + \cdots + c_r \affineheckethetaelementsymbol_{ \iota ( \lambda_r ) } \cdot \affineheckebasissymbol_{w_r} = 0 $. Because $ \otimes $ distributes over arbitrary direct sums, Proposition 3.7 of \cite{lusztig} implies (see Remark \ref{Rtensorlusztig}) that $ \left\{ \affineheckethetaelementsymbol_{ \mu } \cdot \affineheckebasissymbol_w \suchthat \mu \in \overline{\Omega}_M, w \in \finiteweylgroup \right\} $ is a $ \C $-linear basis for $ \mathcal{H} ( \Psi_M, \Delta_{\aff}, \param ) $. This would imply that $ c_i = 0 $ for all $ i $ as desired if it were true that the pairs $ ( \iota ( \lambda_i ), w_i ) \in \widetilde{W} ( \Psi_M ) $ are all distinct, which we now prove. Fix $ i \neq j $. If $ w_i \neq w_j $ then the claim is true, so assume that $ w_i = w_j $. If $ \iota ( \lambda_i ) = \iota ( \lambda_j ) $ then $ \lambda_i - \lambda_j \in ( \Omega_M )_{\tor} $ by Lemma \ref{Lcompatibilitysquare}. But $ ( \Omega_M )_{\tor} \subset ( \Omega_G^{\prime} )_{\tor} $ by Lemma \ref{LMtorsioninsideGtorsion} so by the assumption on $ \Omega_G ( \lambda_i ) $ made at the beginning of this proof, $ \iota ( \lambda_i ) = \iota ( \lambda_j ) $ implies $ \lambda_i = \lambda_j $, which contradicts the assumed distinctness $ ( \lambda_i, w_i ) \neq ( \lambda_j, w_j ) $. Now we prove spanning. Elements of $ \widetilde{W} $ will sometimes be denoted by pairs using the semidirect product $ \widetilde{W} = \Omega_M \rtimes \finiteweylgroup $. If $ u \in \finiteweylgroup $ and $ \lambda \in \Omega_M^{\dom} $ then $ \ell ( u \circ t_{\lambda} ) = \ell ( t_{\lambda} ) + \ell ( u ) $ (Lemma \ref{Ldominancefacts}(\ref{Ldomsplit})) and so by Iwahori-Matsumoto relation (\ref{IMPbraid}), $ \iwahoriheckebasissymbol_{(0, u)} * \iwahoriheckebasissymbol_{(\lambda, 1)} = \iwahoriheckebasissymbol_{ ( u ( \lambda ), u ) } $. So, for any $ \mu \in \Omega_M $ there exists $ u \in \finiteweylgroup $ and $ \lambda \in \Omega_M^{\dom} $ such that $ \iwahoriheckebasissymbol_{(0, u)} * \iwahoriheckebasissymbol_{(\lambda, 1)} = \iwahoriheckebasissymbol_{(\mu, u)} $ (Lemma \ref{Ldominancefacts}(\ref{Ldomrotate})). By the Iwahori-Matsumoto Presentation, for any $ v \in \finiteweylgroup $ the product $ \iwahoriheckebasissymbol_{(\mu, u)} * \iwahoriheckebasissymbol_{(0, v)} $ is the sum of a (non-zero) multiple of $ \iwahoriheckebasissymbol_{(\mu, uv)} $ and a linear combination of $ \iwahoriheckebasissymbol_{(\mu, u v^{\prime} )} $ for various $ v^{\prime} \prec v $. By induction on length, it is clear that the span of the set of triple-products $ \iwahoriheckebasissymbol_u * \iwahoriheckebasissymbol_{\lambda} * \iwahoriheckebasissymbol_v $ ($ u, v \in \finiteweylgroup $, $ \lambda \in \Omega_M^{\dom} $) contains $ \iwahoriheckebasissymbol_{w} $ for all $ w \in \widetilde{W} $ and therefore is all of $ \mathcal{H} ( G ; I ) $. But $ \normalizediwahoriheckebasissymbol_{\lambda} = \iwahoriheckethetaelementsymbol_{\lambda} $ if $ \lambda \in \Omega_M^{\dom} $, so by the commutation relation (Proposition \ref{Pbernsteincommutationrelation}), each such $ \iwahoriheckebasissymbol_u * \iwahoriheckebasissymbol_{\lambda} $ can be replaced by a $ \C $-linear combination of elements of the form $ \iwahoriheckethetaelementsymbol_{x} * \iwahoriheckebasissymbol_w $.
\end{proof}

\section{The center of the Iwahori-Hecke algebra} \label{Scenter}

\subsection{Definition of the central Bernstein functions}

Recall that $ \Omega_M / \finiteweylgroup $ denotes the set of $ \finiteweylgroup $-conjugacy classes in $ \widetilde{W} $ of translations by elements of $ \Omega_M $.

\begin{defn}
For each class $ \mathcal{O} \in \Omega_M / \finiteweylgroup $, define
\begin{equation*}
z_{ \mathcal{O} } \defeq \sum_{\lambda \in \mathcal{O}} \iwahoriheckethetaelementsymbol_{\lambda} \in \mathcal{H} ( G ; I )
\end{equation*}
\end{defn}

\begin{lemma} \label{Lcentralbernelementsarecompatible}
$ \iota ( \mathcal{O} ) $ is a $ \finiteweylgroup $-orbit in $ \overline{\Omega}_M $ and $ \iota_{\mathcal{H}} ( z_{ \mathcal{O} } ) $ is equal to the central Bernstein function $ z_{\iota ( \mathcal{O} )} \in \mathcal{H} ( \Psi_M, \Delta_{\aff}, \param ) $ defined in Proposition 3.11 of \cite{lusztig}. In particular, $ \iota_{\mathcal{H}} ( z_{ \mathcal{O} } ) \in Z [ \mathcal{H} ( \Psi_M, \Delta_{\aff}, \param ) ] $.
\end{lemma}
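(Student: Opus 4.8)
The plan is to dispose of the three assertions in turn. For the first, I would note that the restriction of $\iota$ to $\Omega_M$ is the canonical surjection $\can : \Omega_M \twoheadrightarrow \overline{\Omega}_M$ and is $\finiteweylgroup$-equivariant: both facts are read off from the commutative square of Lemma \ref{Lcompatibilitysquare}, which identifies $\iota$ with $\can \times \identity$ relative to the decompositions $\widetilde{W} = \Omega_M \rtimes \finiteweylgroup$ and $\widetilde{W}(\Psi_M) = \overline{\Omega}_M \rtimes \finiteweylgroup$. Since the image of a single orbit under a surjective equivariant map is again a single orbit, $\iota(\mathcal{O})$ is a $\finiteweylgroup$-orbit in $\overline{\Omega}_M$, namely the orbit of $\iota(\lambda_0)$ for any $\lambda_0 \in \mathcal{O}$.

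The crux is to upgrade this to the statement that $\iota$ restricts to a \emph{bijection} $\mathcal{O} \directedisom \iota(\mathcal{O})$, which is what will make the re-indexing below legitimate. Surjectivity is the previous paragraph, so I would suppose $\lambda, \mu \in \mathcal{O}$ with $\iota(\lambda) = \iota(\mu)$ and argue $\lambda = \mu$. On one hand $\lambda - \mu \in \kernel(\iota\vert_{\Omega_M}) = (\Omega_M)_{\tor}$ by Lemma \ref{Lcompatibilitysquare}. On the other hand, $\mathcal{O}$ is a single $\finiteweylgroup$-conjugacy class, so $\Omega_G(\lambda) = \Omega_G(\mu)$ because the projection $\widetilde{W} \to \Omega_G^{\prime}$ is a group homomorphism constant on conjugacy classes (\S\ref{SSsemidirectproductdecomps}); hence $\lambda\mu^{-1} \in \kernel(\widetilde{W} \to \Omega_G^{\prime}) = W_{\aff}$. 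Thus $\lambda - \mu \in (\Omega_M)_{\tor} \cap W_{\aff}$, which is trivial (this intersection is shown to vanish in the proof of Proposition \ref{Paffineweylgroup}), so $\lambda = \mu$.

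With the bijection in hand, the remaining computation is routine: by $\C$-linearity of $\iota_{\mathcal{H}}$ and Lemma \ref{Lthetaelementsarecompatible},
\[
\iota_{\mathcal{H}}(z_{\mathcal{O}}) = \sum_{\lambda \in \mathcal{O}} \iota_{\mathcal{H}}(\iwahoriheckethetaelementsymbol_{\lambda}) = \sum_{\lambda \in \mathcal{O}} \affineheckethetaelementsymbol_{\iota(\lambda)} = \sum_{\mu \in \iota(\mathcal{O})} \affineheckethetaelementsymbol_{\mu},
\]
the last equality being the re-indexing along $\mathcal{O} \directedisom \iota(\mathcal{O})$; and the final sum is by definition the central Bernstein function $z_{\iota(\mathcal{O})}$ of Proposition 3.11 of \cite{lusztig} (after the specialization $v \mapsto \sqrt{q}$ of Remark \ref{Rtensorlusztig}), which is central in $\mathcal{H}(\Psi_M, \Delta_{\aff}, \param)$ by that same proposition. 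The only genuine obstacle is the injectivity in the middle paragraph; once one recalls that the members of a single conjugacy class share an $\Omega_G^{\prime}$-component, it reduces to the already-established fact $W_{\aff} \cap (\Omega_M)_{\tor} = \{1\}$, and everything else is bookkeeping.
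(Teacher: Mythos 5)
Your argument is correct and follows essentially the same line as the paper's own proof. The paper also reduces to showing $\iota$ is injective on $\mathcal{O}$, and it does so by the same two-pronged observation — $\lambda - \mu \in (\Omega_M)_{\tor}$ from equal $\iota$-images, plus $\lambda - \mu \in W_{\aff}$ from equal $\Omega_G'$-components (constant on the conjugacy class $\mathcal{O}$) — and then concludes from $(\Omega_M)_{\tor} \subset (\Omega_G')_{\tor}$ (Lemma \ref{LMtorsioninsideGtorsion}) together with $W_{\aff} \cap \Omega_G' = \{1\}$, which is just your cited fact $W_{\aff} \cap (\Omega_M)_{\tor} = \{1\}$ re-derived in place rather than quoted from the proof of Proposition \ref{Paffineweylgroup}.
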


\begin{proof}
The first claim is obvious from Lemma \ref{Lcompatibilitysquare}. By Lemma \ref{Lthetaelementsarecompatible}, $ \iota_{\mathcal{H}} ( z_{ \mathcal{O} } ) = \sum_{\lambda \in \mathcal{O}} \affineheckethetaelementsymbol_{\iota ( \lambda )} $. As in the proof of Proposition \ref{Pnoncentralbernsteinbasis}, the facts that $ \Omega_G ( \lambda ) = \Omega_G ( \lambda^{\prime} ) $ for all $ \lambda, \lambda^{\prime} \in \mathcal{O} $ (because $ \Omega_G $ is abelian) and $ ( \Omega_M )_{\tor} \subset ( \Omega_G^{\prime} )_{\tor} $ (because of Lemma \ref{LMtorsioninsideGtorsion}) imply that $ \iota ( \lambda ) \neq \iota ( \lambda^{\prime} ) $ for all $ \lambda \neq \lambda^{\prime} $ in $ \mathcal{O} $. It follows that $ \# \iota ( \mathcal{O} ) = \# \mathcal{O} $ and therefore $ \sum_{\lambda \in \mathcal{O}} \affineheckethetaelementsymbol_{\iota ( \lambda )} = \sum_{\mu \in \iota ( \mathcal{O} )} \affineheckethetaelementsymbol_{ \mu } \defeq z_{\iota ( \mathcal{O} )} $.
\end{proof}

\begin{prop} \label{Pweylorbitfunctionsarecentral}
$ z_{ \mathcal{O} } \in Z [ \mathcal{H} ( G ; I ) ] $ for all $ \mathcal{O} \in \Omega_M / \finiteweylgroup $.
\end{prop}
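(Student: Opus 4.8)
The plan is to exploit the surjective algebra homomorphism $\iota_{\mathcal{H}}$ exactly as was done for the commutation relation in Proposition \ref{Pbernsteincommutationrelation}: since $\mathcal{H}(G;I)$ is generated as a $\C$-algebra by the elements $\iwahoriheckebasissymbol_s$ ($s \in \Delta_{\aff}$) together with the $\iwahoriheckethetaelementsymbol_{\lambda}$, and since $z_{\mathcal{O}}$ is manifestly a $\C$-linear combination of $\iwahoriheckethetaelementsymbol_{\lambda}$'s, it commutes with every $\iwahoriheckethetaelementsymbol_{\mu}$ by the additivity relation (Proposition \ref{Pbernsteinadditivityrelation}); so centrality reduces to checking that the commutator $z_{\mathcal{O}} * \iwahoriheckebasissymbol_s - \iwahoriheckebasissymbol_s * z_{\mathcal{O}}$ vanishes for each $s \in \Delta_{\aff}$. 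For $s \in \Delta_{\aff} \setminus \Delta_{\finiteweylsymbol}$, I would first reduce to $s \in \Delta_{\finiteweylsymbol}$ by a standard manipulation (conjugating $s$ into $\Delta_{\finiteweylsymbol}$ by an element of $W_{\aff}$ built from a suitable translation, using the additivity relation to absorb that translation into the $\iwahoriheckethetaelementsymbol$'s), so that only the generators coming from $\Delta_{\finiteweylsymbol}$ need be treated.

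Next I would apply $\iota_{\mathcal{H}}$ to the commutator. By Lemma \ref{Lcentralbernelementsarecompatible}, $\iota_{\mathcal{H}}(z_{\mathcal{O}}) = z_{\iota(\mathcal{O})}$ is central in $\mathcal{H}(\Psi_M, \Delta_{\aff}, \param)$ — this is the analogous known fact from Proposition 3.11 of \cite{lusztig} — so $\iota_{\mathcal{H}}\!\left(z_{\mathcal{O}} * \iwahoriheckebasissymbol_s - \iwahoriheckebasissymbol_s * z_{\mathcal{O}}\right) = z_{\iota(\mathcal{O})} \cdot \affineheckebasissymbol_s - \affineheckebasissymbol_s \cdot z_{\iota(\mathcal{O})} = 0$. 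Hence the commutator lies in $\kernel(\iota_{\mathcal{H}})$, and by Lemma \ref{Lringhomtoreducedheckealgebra} it therefore suffices to show that $\Omega_G(w)$ is constant over all $w \in \widetilde{W}$ supporting $z_{\mathcal{O}} * \iwahoriheckebasissymbol_s - \iwahoriheckebasissymbol_s * z_{\mathcal{O}}$. Since $\Omega_G$ is abelian, $\Omega_G(\lambda) = \Omega_G(\lambda')$ for all $\lambda, \lambda'$ in the single orbit $\mathcal{O}$; and for each fixed $\lambda \in \mathcal{O}$, Lemma \ref{Lomegacomponentsofthetas} tells us that every $w$ supporting $\iwahoriheckethetaelementsymbol_{\lambda} * \iwahoriheckebasissymbol_s$, and every $w$ supporting $\iwahoriheckebasissymbol_s * \iwahoriheckethetaelementsymbol_{\lambda} - \iwahoriheckethetaelementsymbol_{s(\lambda)} * \iwahoriheckebasissymbol_s$, satisfies $\Omega_G(w) = \Omega_G(t_{\lambda})$. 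Combining these observations, every $w$ supporting the full commutator has $\Omega_G(w)$ equal to the common value $\Omega_G(t_{\lambda})$, $\lambda \in \mathcal{O}$. By the final clause of Lemma \ref{Lringhomtoreducedheckealgebra}, the commutator is therefore $0$, which is what we wanted.

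The one point requiring genuine care — the main obstacle — is the reduction from arbitrary $s \in \Delta_{\aff}$ to $s \in \Delta_{\finiteweylsymbol}$, since the commutation relation (Proposition \ref{Pbernsteincommutationrelation}) is stated only for $s \in \Delta_{\finiteweylsymbol}$, and one must verify that the translation element used to conjugate $s$ into $\Delta_{\finiteweylsymbol}$ can be chosen dominant (or a difference of dominants) so that the additivity relation applies cleanly and no extra $\Omega_G$-components are introduced; Lemma \ref{Ldominancefacts}(\ref{Ldomshift}) supplies exactly such an element. (Alternatively, and perhaps more cleanly, one can avoid the reduction entirely: $\mathcal{H}(G;I)$ is generated by $\{\iwahoriheckethetaelementsymbol_{\lambda} : \lambda \in \Omega_M\} \cup \{\iwahoriheckebasissymbol_w : w \in \finiteweylgroup\}$ by Proposition \ref{Pnoncentralbernsteinbasis}, so it is enough to commute $z_{\mathcal{O}}$ past each $\iwahoriheckebasissymbol_s$ with $s \in \Delta_{\finiteweylsymbol}$, and the argument above does precisely that.) Everything else is a routine transfer-of-identity through $\iota_{\mathcal{H}}$ using the already-established kernel description.
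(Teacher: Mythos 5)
Your proposal is correct and, once you discard the detour in your first paragraph, coincides with the paper's own proof. The paper never faces the case $s \in \Delta_{\aff} \setminus \Delta_{\finiteweylsymbol}$ at all: it appeals directly to Proposition \ref{Pnoncentralbernsteinbasis} (plus Iwahori--Matsumoto relation (\ref{IMPbraid}) to factor each $\iwahoriheckebasissymbol_w$, $w \in \finiteweylgroup$, into generators $\iwahoriheckebasissymbol_s$ with $s \in \Delta_{\finiteweylsymbol}$) to say that $\mathcal{H}(G;I)$ is generated by $\{\iwahoriheckethetaelementsymbol_{\lambda}\} \cup \{\iwahoriheckebasissymbol_s : s \in \Delta_{\finiteweylsymbol}\}$, which is exactly the ``alternatively, and perhaps more cleanly'' route you sketch at the end. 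You should promote that parenthetical to be the actual argument and delete the speculative conjugation reduction, which would only introduce avoidable complications (indeed it was precisely this sort of maneuvering that the basis result was designed to obviate). From that point on your argument --- apply $\iota_{\mathcal{H}}$, invoke Lemma \ref{Lcentralbernelementsarecompatible} to land in $\kernel(\iota_{\mathcal{H}})$, then use Lemma \ref{Lringhomtoreducedheckealgebra} together with Lemma \ref{Lomegacomponentsofthetas} and $\finiteweylgroup$-invariance of $\Omega_G$ along the orbit $\mathcal{O}$ to force the commutator to vanish --- is exactly the paper's. The one cosmetic difference is that the paper first rewrites the commutator via Proposition \ref{Pbernsteincommutationrelation} (using $s(\mathcal{O}) = \mathcal{O}$) into a linear combination of $\iwahoriheckethetaelementsymbol_{\lambda - \jmath\alpha^{\vee}}$ and applies the first clause of Lemma \ref{Lomegacomponentsofthetas}, whereas you apply the second and third clauses of Lemma \ref{Lomegacomponentsofthetas} directly to the summands $\iwahoriheckethetaelementsymbol_{\lambda} * \iwahoriheckebasissymbol_s$ and $\iwahoriheckebasissymbol_s * \iwahoriheckethetaelementsymbol_{\lambda} - \iwahoriheckethetaelementsymbol_{s(\lambda)} * \iwahoriheckebasissymbol_s$; both readings of that lemma yield the same conclusion with the same amount of work.
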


\begin{proof}
Fix $ \mathcal{O} \in \Omega_M / \finiteweylgroup $. By the additivity relation (Proposition \ref{Pbernsteinadditivityrelation}), $ \iwahoriheckethetaelementsymbol_{\mu} $ commutes with $ z_{ \mathcal{O} } $ for all $ \mu \in \Omega_M $, so by Proposition \ref{Pnoncentralbernsteinbasis} it suffices to show that $ z_{ \mathcal{O} } * \iwahoriheckebasissymbol_s = \iwahoriheckebasissymbol_s * z_{ \mathcal{O} } $ for all $ s \in \Delta_{\finiteweylsymbol} $. Fix $ s \in \Delta_{\finiteweylsymbol} $. By Lemma \ref{Lcentralbernelementsarecompatible}, $ \iota_{\mathcal{H}} ( z_{ \mathcal{O} } * \iwahoriheckebasissymbol_s - \iwahoriheckebasissymbol_s * z_{ \mathcal{O} } ) = 0 $. By Lemma \ref{Lringhomtoreducedheckealgebra}, the desired commutativity will follow immediately if we verify that $ \Omega_G ( w ) $ is the same for all $ w $ supporting $ z_{ \mathcal{O} } * \iwahoriheckebasissymbol_s - \iwahoriheckebasissymbol_s * z_{ \mathcal{O} } $, which we do next. Let $ \alpha \in \scaledrootsystemsymbol $ be the root associated to $ s $. Since $ s ( \mathcal{O} ) = \mathcal{O} $, the commutation relation (Proposition \ref{Pbernsteincommutationrelation}) implies that $ z_{ \mathcal{O} } * \iwahoriheckebasissymbol_s - \iwahoriheckebasissymbol_s * z_{ \mathcal{O} } $ is a linear combination of elements $ \iwahoriheckethetaelementsymbol_{ \lambda - \jmath \alpha^{\vee} } $ for various $ \lambda \in \mathcal{O} $ and $ \jmath $. As noted in the above proof of the commutation relation, $ \Omega_G ( \lambda - \jmath \alpha^{\vee} ) = \Omega_G ( \lambda ) $ for all $ \lambda \in \mathcal{O} $ and all $ \jmath $. Since $ \Omega_G ( \lambda ) = \Omega_G ( \lambda^{\prime} ) $ for all $ \lambda, \lambda^{\prime} \in \mathcal{O} $ ($ \Omega_G $ is abelian), Lemma \ref{Lomegacomponentsofthetas} implies that $ \Omega_G ( w ) $ is the same for all $ w $ supporting $ z_{ \mathcal{O} } * \iwahoriheckebasissymbol_s - \iwahoriheckebasissymbol_s * z_{ \mathcal{O} } $.
\end{proof}

\subsection{Compatibility with \cite{roro}} \label{SSrorocompat}

\begin{center}
\emph{In the next subsection \S\ref{SSbernfunctionsarebasis}, the main theorem from \cite{roro} is used to prove that $ \{ z_{ \mathcal{O} } \}_{ \mathcal{O} \in \Omega_M / \finiteweylgroup } $ spans $ Z [ \mathcal{H} ( G ; I ) ] $. In \cite{roro}, it was assumed for convenience that the root system defining the affine Weyl group was irreducible, and this subsection shows that this assumption is not really necessary to the conclusion.}
\end{center}

Denote by $ \myaut ( \apartmentsymbol_i^{\semisimple} ) $ the group of invertible affine transformations of $ \apartmentsymbol_i^{\semisimple} $ (see \S\ref{SSirredsubsystems}). For each $ i $, restriction defines a group homomorphism $ \widetilde{W} \rightarrow \myaut ( \apartmentsymbol_i^{\semisimple} ) $, and the image of $ \widetilde{W} $ under this group homomorphism is denoted $ \widetilde{W}^i $. Similarly, denote by $ W_{\aff}^i $, $ \Omega^i $, $ \Lambda^i $, $ \finiteweylgroup^i $ the images of the subgroups $ W_{\aff} $, $ \Omega_G $, $ \Omega_M $, $ \finiteweylgroup $.

Because of the known semidirect product decompositions from \S\ref{SSsemidirectproductdecomps}, the image $ \widetilde{W}^i \subset \myaut ( \apartmentsymbol_i^{\semisimple} ) $ is equal to the product of subgroups $ W_{\aff}^i \cdot \Omega^i $ and also to the product of subgroups $ \Lambda^i \cdot \finiteweylgroup^i $. It is immediate from the definition that $ W_{\aff}^i = W_{\aff} ( \scaledrootsystemsymbol_i ) $ and $ \finiteweylgroup^i = \finiteweylgroup ( \scaledrootsystemsymbol_i ) $, that $ \Omega^i $ stabilizes $ \scaledrootsystemsymbol_i $ (since $ \Omega_G $ already does) and that $ \Lambda_i $ consists of translations of $ \apartmentsymbol_i^{\semisimple} $. In particular, each pair of subgroups in the products above have trivial intersection and therefore the products are semidirect, i.e. $ \widetilde{W}^i $ has the semidirect product decompositions $ \widetilde{W}^i = W_{\aff}^i \rtimes \Omega^i $ and $ \widetilde{W}^i = \Lambda^i \rtimes \finiteweylgroup^i $. It is then obvious also that $ \Lambda^i $ is the subgroup of all elements of $ \widetilde{W}^i $ which act by translations on $ \apartmentsymbol_i^{\semisimple} $.

In summary, for each $ i $, the quasi-Coxeter group $ \widetilde{W}^i = W_{\aff} ( \scaledrootsystemsymbol_i ) \rtimes \Omega^i $ acting on the apartment $ \apartmentsymbol_i^{\semisimple} $ matches the setup and satisfies the hypotheses of \cite{roro}--see \S2.1 and \S3.2 of \cite{roro} for more details and explanation of terminology.

It is now routine to extend the scope of \cite{roro} to \emph{reducible} root systems:
\begin{lemma} \label{Lextensionofdiamondpapertheorem}
If $ w \in \widetilde{W} $ and $ w \notin \Omega_M $ then there exist $ s_1, \ldots, s_n, s \in \Delta_{\aff} $ such that $ \ell ( s s_n \cdots s_1 w s_1 \cdots s_n s ) > \ell ( s_n \cdots s_1 w s_1 \cdots s_n ) = \cdots = \ell ( s_1 w s_1 ) = \ell ( w ) $.
\end{lemma}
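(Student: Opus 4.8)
The plan is to reduce to the irreducible case, which is the content of \cite{roro}, by exploiting the product decompositions of $ \widetilde{W} $, $ W_{\aff} $, $ \Delta_{\aff} $, and $ \ell $ recorded in \S\ref{SSirredsubsystems} and \S\ref{SSrorocompat}. Write $ \pi_i : \widetilde{W} \rightarrow \widetilde{W}^i $ for the restriction-of-action homomorphism onto $ \apartmentsymbol_i^{\semisimple} $. I would first record two structural facts: \textbf{(a)} $ \ell ( v ) = \sum_i \ell_i ( \pi_i ( v ) ) $ for all $ v \in \widetilde{W} $ (where $ \ell_i $ is extended to $ \widetilde{W}^i $ by inflation, as in \cite{roro}), and \textbf{(b)} via the inclusion $ \Delta_{\aff}^i \subset \Delta_{\aff} $, each $ \pi_i $ maps $ \Delta_{\aff}^i $ identically into $ W_{\aff} ( \scaledrootsystemsymbol_i ) $ and sends every $ s \in \Delta_{\aff}^j $ with $ j \neq i $ to $ 1 $. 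Fact (b) is immediate from $ W_{\aff} = W_{\aff} ( \scaledrootsystemsymbol_1 ) \times \cdots \times W_{\aff} ( \scaledrootsystemsymbol_r ) $, with $ \Delta_{\aff}^j $ generating the $ j $-th factor. Fact (a) follows from the same decomposition together with the identity $ \ell = \sum_i \ell_i $ on $ W_{\aff} $ from \S\ref{SSirredsubsystems}, since $ \ell $ and $ \ell_i $ are inflated from $ W_{\aff} $ resp. $ W_{\aff} ( \scaledrootsystemsymbol_i ) $ along the projections killing $ \Omega_G $ resp. $ \Omega^i $, and these projections are intertwined by the $ \pi_i $ via $ \widetilde{W}^i = W_{\aff} ( \scaledrootsystemsymbol_i ) \rtimes \Omega^i $.

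Next I would show that $ w \notin \Omega_M $ forces $ \pi_i ( w ) \notin \Lambda^i $ for some $ i $. Writing $ w = t_{\mu} v $ with $ \mu \in \Omega_M $, $ v \in \finiteweylgroup $ via $ \widetilde{W} = \Omega_M \rtimes \finiteweylgroup $, one has $ \pi_i ( t_{\mu} ) \in \Lambda^i $ (it acts on $ \apartmentsymbol_i^{\semisimple} $ by a translation) and $ \pi_i ( v ) \in \finiteweylgroup^i $, so, since $ \Lambda^i \cap \finiteweylgroup^i = \{ 1 \} $ in $ \widetilde{W}^i = \Lambda^i \rtimes \finiteweylgroup^i $, the condition $ \pi_i ( w ) \in \Lambda^i $ is equivalent to $ \pi_i ( v ) = 1 $. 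Now $ \pi_i $ restricts on $ \finiteweylgroup = \finiteweylgroup ( \scaledrootsystemsymbol_1 ) \times \cdots \times \finiteweylgroup ( \scaledrootsystemsymbol_r ) $ to the $ i $-th coordinate projection, so if $ \pi_i ( v ) = 1 $ for every $ i $ then $ v = 1 $ and hence $ w = t_{\mu} \in \Omega_M $, contrary to hypothesis.

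Fix such an $ i $. By \S\ref{SSrorocompat} the pair $ ( \widetilde{W}^i = W_{\aff} ( \scaledrootsystemsymbol_i ) \rtimes \Omega^i, \Delta_{\aff}^i ) $ meets the hypotheses of \cite{roro}, so applying the main theorem of \cite{roro} to $ \pi_i ( w ) \in \widetilde{W}^i \setminus \Lambda^i $ produces $ s_1, \ldots, s_n, s \in \Delta_{\aff}^i $ with
\begin{equation*}
\ell_i ( s s_n \cdots s_1 \pi_i ( w ) s_1 \cdots s_n s ) > \ell_i ( s_n \cdots s_1 \pi_i ( w ) s_1 \cdots s_n ) = \cdots = \ell_i ( s_1 \pi_i ( w ) s_1 ) = \ell_i ( \pi_i ( w ) ).
\end{equation*}
Viewing $ s_1, \ldots, s_n, s $ inside $ \Delta_{\aff} $: for $ j \neq i $, fact (b) gives $ \pi_j ( s_k \cdots s_1 w s_1 \cdots s_k ) = \pi_j ( w ) $ (and likewise for $ s s_n \cdots s_1 w s_1 \cdots s_n s $), so no $ \ell_j $-term changes; for $ j = i $, $ \pi_i $ is a homomorphism fixing each $ s_k $ and $ s $, so $ \pi_i ( s_k \cdots s_1 w s_1 \cdots s_k ) = s_k \cdots s_1 \pi_i ( w ) s_1 \cdots s_k $, and similarly for the final element. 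Summing fact (a) over $ j $ and substituting the displayed chain then yields exactly the asserted equalities $ \ell ( s_n \cdots s_1 w s_1 \cdots s_n ) = \cdots = \ell ( w ) $ together with $ \ell ( s s_n \cdots s_1 w s_1 \cdots s_n s ) > \ell ( s_n \cdots s_1 w s_1 \cdots s_n ) $.

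Apart from invoking \cite{roro}, the argument is bookkeeping; the one step requiring real care --- and the one I would regard as the crux --- is the transfer between $ \widetilde{W} $ and its factors $ \widetilde{W}^i $, namely establishing fact (a) on all of $ \widetilde{W} $ (not merely on $ W_{\aff} $) and the membership criterion "$ w \in \Omega_M $ iff $ \pi_i ( w ) \in \Lambda^i $ for all $ i $''. Both rest only on the semidirect product decompositions $ \widetilde{W}^i = W_{\aff} ( \scaledrootsystemsymbol_i ) \rtimes \Omega^i = \Lambda^i \rtimes \finiteweylgroup^i $ set up in \S\ref{SSrorocompat}, so no genuinely new difficulty is introduced.
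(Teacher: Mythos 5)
Your proof is correct and takes essentially the same approach as the paper's: both decompose along $W_{\aff} = W_{\aff}(\scaledrootsystemsymbol_1) \times \cdots \times W_{\aff}(\scaledrootsystemsymbol_r)$, locate an index $i$ with $\pi_i(w) \notin \Lambda^i$ (the paper phrases this via the explicit factorization $w = (u_1, \ldots, u_r, \tau)$, you via $w = t_\mu v$ and faithfulness of $\finiteweylgroup$ on $\apartmentsymbol^{\semisimple}$), apply the main theorem of \cite{roro} to $\pi_i(w)$, and lift back using $\ell = \ell_1 + \cdots + \ell_r$ together with the fact that $\Delta_{\aff}^i$ acts trivially on the other factors. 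Your explicit formulation of the restriction maps $\pi_i$ as homomorphisms and facts (a) and (b) is a slightly more systematic packaging of the bookkeeping the paper does inline, but the content is the same.
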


\begin{proof}
Since $ W_{\aff} = W_{\aff} ( \scaledrootsystemsymbol_1 ) \times \cdots \times W_{\aff} ( \scaledrootsystemsymbol_r ) $, we can factor $ w $ as $ w = ( u_1, \ldots, u_r, \tau ) $ for some $ u_i \in W_{\aff} ( \scaledrootsystemsymbol_i ) $ and $ \tau \in \Omega_G $. Denote by $ \tau_i \in \Omega^i $ the restriction of the transformation $ \tau $ to $ \apartmentsymbol_i^{\semisimple} $ and set $ w_i \defeq ( u_i, \tau_i ) \in \widetilde{W}^i $. Since $ w \notin \Omega_M $ and since $ w $ acts on $ \apartmentsymbol^{\semisimple} = \apartmentsymbol_1^{\semisimple} \times \cdots \times \apartmentsymbol_r^{\semisimple} $ by the product $ ( w_1, \ldots, w_r ) $, there must exist $ i $ such that $ w_i \notin \Lambda^i $. By the discussion preceding this proof, we may apply the main theorem of \cite{roro} to the element $ w_i $ and conclude the existence of $ s_1, \ldots, s_n, s \in \Delta_{\aff}^i $ such that $ \ell_i ( s s_n \cdots s_1 w_i s_1 \cdots s_n s ) > \ell_i ( s_n \cdots s_1 w_i s_1 \cdots s_n ) = \cdots = \ell_i ( s_1 w_i s_1 ) = \ell_i ( w_i ) $. Since the permutation of $ \Delta_{\aff}^i $ by $ \Omega_G $ factors through $ \Omega^i $, since $ \Delta_{\aff}^i $ commutes with $ W_{\aff} ( \scaledrootsystemsymbol_j ) $ for all $ j \neq i $, and since $ \ell = \ell_1 + \cdots + \ell_r $, it follows immediately that $ \ell ( s s_n \cdots s_1 w s_1 \cdots s_n s ) > \ell ( s_n \cdots s_1 w s_1 \cdots s_n ) = \cdots = \ell ( s_1 w s_1 ) = \ell ( w ) $, as desired.
\end{proof}

For convenience of the reader, the application to Hecke algebras from \cite{roro} is reproduced here:
\begin{lemma} \label{Lextensionofdiamondpapercorollary}
Recall that for each $ \mathcal{O} \in \Omega_M / \finiteweylgroup $, $ \ell ( \lambda ) $ and $ \Omega_G ( \lambda ) $ are both independent of $ \lambda \in \mathcal{O} $. Denote the common values by $ \ell ( \mathcal{O} ) $ and $ \Omega_G ( \mathcal{O} ) $.

Suppose that for each $ \mathcal{O} \in \Omega_M / \finiteweylgroup $ a function $ \zeta_{ \mathcal{O} } \in Z [ \mathcal{H} ( G ; I ) ] $ is known such that $ \Omega_G ( w ) = \Omega ( \mathcal{O} ) $ and $ \ell ( w ) \leq \ell ( \mathcal{O} ) $ for each $ w $ supporting $ \zeta_{ \mathcal{O} } $.

If the collection $ \{ \zeta_{ \mathcal{O} } \}_{ \mathcal{O} \in \Omega_M / \finiteweylgroup } $ is linearly-independent then it necessarily spans $ Z [ \mathcal{H} ( G ; I ) ] $.
\end{lemma}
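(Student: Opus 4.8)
The plan is to derive spanning from the assumed linear independence by a dimension count carried out on finitely many basis vectors at a time. First I would reduce to a finite-dimensional statement: for an integer $N \geq 0$ and a finite subset $F \subseteq \Omega_G$, set $Z_{N,F} \defeq Z[\mathcal{H}(G;I)] \cap \myspan\{ \iwahoriheckebasissymbol_w \suchthat \ell(w) \leq N,\ \Omega_G(w) \in F \}$ and $\mathcal{S}_{N,F} \defeq \{ \mathcal{O} \in \Omega_M/\finiteweylgroup \suchthat \ell(\mathcal{O}) \leq N,\ \Omega_G(\mathcal{O}) \in F \}$. Since the assignment $w \mapsto ( \text{the } W_{\aff}\text{-component of } w,\ \Omega_G(w) )$ is injective and $W_{\aff}$ has only finitely many elements of length $\leq N$, both $\mathcal{S}_{N,F}$ and the ambient span are finite; thus $Z_{N,F}$ is a finite-dimensional $\C$-vector space, and the hypothesis on the support of $\zeta_{\mathcal{O}}$ gives $\zeta_{\mathcal{O}} \in Z_{N,F}$ whenever $\mathcal{O} \in \mathcal{S}_{N,F}$. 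As every central element has finite support it lies in some $Z_{N,F}$, so it suffices to prove, for all $N$ and $F$, that the finite linearly-independent family $\{ \zeta_{\mathcal{O}} \}_{\mathcal{O} \in \mathcal{S}_{N,F}}$ is a basis of $Z_{N,F}$; and for this it is enough to show $\dim_{\C} Z_{N,F} \leq \# \mathcal{S}_{N,F}$.

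I would obtain that inequality by exhibiting an injective $\C$-linear map $Z_{N,F} \hookrightarrow \C^{\mathcal{S}_{N,F}}$. Each $\finiteweylgroup$-orbit $\mathcal{O}$ in $\Omega_M$ has a unique dominant element $\lambda_{\mathcal{O}}$: by Lemma \ref{LMtorsioninsideGtorsion} the subgroup $(\Omega_M)_{\tor}$ is pointwise fixed by $\finiteweylgroup$ (a dominant element whose whole orbit is dominant is fixed), so $\finiteweylgroup$-stabilizers in $\Omega_M$ coincide with those in the torsion-free $\overline{\Omega}_M$, where uniqueness is standard. Define $a_{\mathcal{O}}(z)$ to be the coefficient of $\iwahoriheckebasissymbol_{t_{\lambda_{\mathcal{O}}}}$ in $z$; the map $z \mapsto ( a_{\mathcal{O}}(z) )_{\mathcal{O} \in \mathcal{S}_{N,F}}$ is $\C$-linear. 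The injectivity of its restriction to $Z_{N,F}$ is precisely the Hecke-algebra consequence of \cite{roro}, available here because Lemma \ref{Lextensionofdiamondpapertheorem} supplies its only combinatorial input without an irreducibility hypothesis. Explicitly, the fact I need is: for a nonzero central $z$, if $\tau$ is a value taken by $\Omega_G(\cdot)$ on the support of $z$ and $\deg_\tau(z)$ is the largest length occurring there, then the top layer $\{ w \suchthat w \text{ supports } z,\ \Omega_G(w) = \tau,\ \ell(w) = \deg_\tau(z) \}$ consists only of translations by $\Omega_M$, is a union of complete $\finiteweylgroup$-orbits, and $z$ has a common nonzero coefficient along each such orbit. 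Granting this, for an orbit $\mathcal{O}$ inside that layer the element $t_{\lambda_{\mathcal{O}}}$ is again in the layer — it has length $\ell(\mathcal{O}) = \deg_\tau(z)$ by Lemma \ref{Ldominancefacts}(\ref{Ldomorbit}) and $\Omega_G$-value $\tau$ — so $a_{\mathcal{O}}(z) \neq 0$, and $\mathcal{O} \in \mathcal{S}_{N,F}$ once $z \in Z_{N,F}$; hence the map is injective on $Z_{N,F}$, which gives the required inequality.

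To justify that top-layer statement I would reproduce the argument of \cite{roro}, checking that it never uses irreducibility of $\scaledrootsystemsymbol$. A central $z$ is fixed by conjugation by each $\iwahoriheckebasissymbol_s$ ($s \in \Delta_{\aff}$) and each $\iwahoriheckebasissymbol_u$ ($u \in \finiteweylgroup$), all invertible by Proposition \ref{Piwahorimatsumotopresentation}. Conjugation by $\iwahoriheckebasissymbol_s$ carries $\iwahoriheckebasissymbol_w$ into the span of $\{ \iwahoriheckebasissymbol_v \suchthat v \in \{ w, sw, ws, sws \} \}$, whose members all share the $\Omega_G$-value of $w$ since $s \in W_{\aff} = \kernel( \widetilde{W} \twoheadrightarrow \Omega_G )$, so one may analyze the $\Omega_G$-graded pieces of $z$ separately. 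Within a piece one uses the elementary cyclic-shift identities $\iwahoriheckebasissymbol_s^{\pm 1} \iwahoriheckebasissymbol_w \iwahoriheckebasissymbol_s^{\mp 1} = \iwahoriheckebasissymbol_{sws}$ (valid when $\ell(sws) = \ell(w)$ with a one-sided length drop, automatic for $w$ in the top layer), the fact that a single conjugation changes length by $0$ or $\pm 2$, and coefficient bookkeeping: a non-translation in the top layer would, along the length-preserving conjugation path furnished by Lemma \ref{Lextensionofdiamondpapertheorem}, survive in the support, and the final length-increasing conjugation of that lemma would then force $z$ to be supported at an element of length $\deg_\tau(z) + 2$ of the same $\Omega_G$-value, which is impossible; the orbit-and-constant-coefficient half comes out of the same bookkeeping applied to conjugation by the $\iwahoriheckebasissymbol_u$, $u \in \finiteweylgroup$, using $\ell(t_{u(\mu)}) = \ell(t_\mu)$ (Lemma \ref{Ldominancefacts}(\ref{Ldomorbit})) to keep the whole orbit at length $\deg_\tau(z)$.

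The hard part is exactly this last step — converting Lemma \ref{Lextensionofdiamondpapertheorem} into the coefficient-propagation statement about top layers of central elements — but it is carried out in \cite{roro} and, as indicated above, that derivation uses nothing about $\scaledrootsystemsymbol$ beyond Lemma \ref{Lextensionofdiamondpapertheorem}; the remaining ingredients (finiteness of $\mathcal{S}_{N,F}$, finite-dimensionality of $Z_{N,F}$, uniqueness of dominant representatives, and compatibility with the $\Omega_G$-grading) are routine.
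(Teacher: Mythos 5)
Your proposal is correct and takes essentially the same approach as the paper: the paper's proof of this lemma is a one-line citation to the spanning argument of \cite{roro} (\S 8.2 there), with Lemma \ref{Lextensionofdiamondpapertheorem} substituted for the original main theorem of \cite{roro} so as to drop the irreducibility hypothesis on $\scaledrootsystemsymbol$. What you write out — the finite-dimensional filtration of $Z[\mathcal{H}(G;I)]$ by length and $\Omega_G$-value, the injective coefficient-extraction map at the unique dominant representative of each orbit, and the ``top layer'' structure of a central element obtained by propagating support along the conjugation path from Lemma \ref{Lextensionofdiamondpapertheorem} — is a reconstruction of precisely the argument the paper points to, so the two proofs coincide in substance (one small caveat: your parenthetical ``a dominant element whose whole orbit is dominant is fixed'' is not the operative reason that $(\Omega_M)_{\tor}$ is pointwise $\finiteweylgroup$-fixed, since every torsion element has an entirely dominant orbit; what you actually need, and correctly cite, is the injection of Lemma \ref{LMtorsioninsideGtorsion} together with conjugation-invariance of the $\Omega_G$-projection).
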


\begin{proof}
Because of the Iwahori-Matsumoto Presentation (Proposition \ref{Piwahorimatsumotopresentation}) for $ \mathcal{H} ( G ; I ) $, the linear system describing the $ \C $-subspace $ Z [ \mathcal{H} ( G ; I ) ] $ here is the same as the one appearing in \S8.2 of \cite{roro}, and the proof that $ \{ \zeta_{ \mathcal{O} } \} $ is a spanning set is identical to the proof given in \cite{roro}, using the extension Lemma \ref{Lextensionofdiamondpapertheorem} to reducible root systems in place of the original main theorem of \cite{roro}.
\end{proof}

\subsection{Proof that the central Bernstein functions are a basis} \label{SSbernfunctionsarebasis}

\begin{prop} \label{Pcentralbernsteinbasis}
$ \{ z_{ \mathcal{O} } \}_{ \mathcal{O} \in \Omega_M / \finiteweylgroup } $ is a basis for $ Z [ \mathcal{H} ( G ; I ) ] $.
\end{prop}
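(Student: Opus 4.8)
The plan is to combine the structural input from \S\ref{SSrorocompat} with the linear-independence of the $ z_{\mathcal{O}} $, using Lemma \ref{Lextensionofdiamondpapercorollary} as the final engine. Concretely, I would first check that the functions $ \zeta_{\mathcal{O}} \defeq z_{\mathcal{O}} $ satisfy the hypotheses of Lemma \ref{Lextensionofdiamondpapercorollary}: each $ z_{\mathcal{O}} $ is central by Proposition \ref{Pweylorbitfunctionsarecentral}, and for every $ w $ supporting $ z_{\mathcal{O}} = \sum_{\lambda \in \mathcal{O}} \iwahoriheckethetaelementsymbol_{\lambda} $ one has $ w \preceq \lambda $ for some $ \lambda \in \mathcal{O} $ by Lemma \ref{Lhainesfact}; hence $ \ell(w) \leq \ell(\lambda) = \ell(\mathcal{O}) $ and $ \Omega_G(w) = \Omega_G(\lambda) = \Omega_G(\mathcal{O}) $ (the latter equalities because the Bruhat-Chevalley order on $ \widetilde{W} $ only compares elements with the same $ \Omega_G $-component, and $ \Omega_G(\lambda) $ is constant on $ \finiteweylgroup $-orbits since $ \Omega_G $ is abelian, exactly as observed after the proof of Lemma \ref{Lhainesfact}). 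So it only remains to prove that $ \{ z_{\mathcal{O}} \}_{\mathcal{O} \in \Omega_M / \finiteweylgroup} $ is linearly independent, and then Lemma \ref{Lextensionofdiamondpapercorollary} gives spanning for free.

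For linear independence I would argue in the same style as Proposition \ref{Pnoncentralbernsteinbasis} and Lemma \ref{Lcentralbernelementsarecompatible}: suppose $ \sum_i c_i z_{\mathcal{O}_i} = 0 $ with distinct orbits $ \mathcal{O}_i $. The functions $ z_{\mathcal{O}_i} $ with distinct values of $ \Omega_G(\mathcal{O}_i) $ have disjoint supports, so one may assume all $ \Omega_G(\mathcal{O}_i) $ coincide. Apply $ \iota_{\mathcal{H}} $: by Lemma \ref{Lcentralbernelementsarecompatible}, $ \iota_{\mathcal{H}}(z_{\mathcal{O}_i}) = z_{\iota(\mathcal{O}_i)} $, the central Bernstein function of \cite{lusztig}. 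These are linearly independent in $ \mathcal{H}(\Psi_M, \Delta_{\aff}, \param) $ by Proposition 3.11 of \cite{lusztig} (via Remark \ref{Rtensorlusztig}), provided the orbits $ \iota(\mathcal{O}_i) \subset \overline{\Omega}_M $ are pairwise distinct. This last point is where the assumption on $ \Omega_G(\mathcal{O}_i) $ is used: if $ \iota(\mathcal{O}_i) = \iota(\mathcal{O}_j) $ then picking $ \lambda \in \mathcal{O}_i $, $ \mu \in \mathcal{O}_j $ with $ \iota(\lambda) = \iota(\mu) $ forces $ \lambda - \mu \in (\Omega_M)_{\tor} \subset (\Omega_G^{\prime})_{\tor} $ (Lemmas \ref{Lcompatibilitysquare} and \ref{LMtorsioninsideGtorsion}), and combined with $ \Omega_G(\mathcal{O}_i) = \Omega_G(\mathcal{O}_j) $ this gives $ \lambda = \mu $, hence $ \mathcal{O}_i = \mathcal{O}_j $, a contradiction. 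Thus all $ c_i = 0 $.

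The only genuinely delicate point — and the one I would expect to be the main obstacle — is the reduction ``assume all $ \Omega_G(\mathcal{O}_i) $ are equal'', because it rests on the claim that $ \Omega_G(w) $ is constant on the support of $ z_{\mathcal{O}} $ and equal to $ \Omega_G(\mathcal{O}) $. This needs Lemma \ref{Lhainesfact} together with the fact that the extended Bruhat-Chevalley order on $ \widetilde{W} $ never changes the $ \Omega_G $-component (recorded in \S\ref{SSsemidirectproductdecomps}), plus the constancy of $ \Omega_G $ on $ \finiteweylgroup $-orbits. Once this bookkeeping is in place, everything else is a transport of \cite{lusztig} along $ \iota_{\mathcal{H}} $ plus an appeal to Lemma \ref{Lextensionofdiamondpapercorollary}, so I would write the proof as: (1) verify the hypotheses of Lemma \ref{Lextensionofdiamondpapercorollary}; (2) prove linear independence by applying $ \iota_{\mathcal{H}} $ and invoking Proposition 3.11 of \cite{lusztig}; (3) conclude spanning, hence ``basis'', from Lemma \ref{Lextensionofdiamondpapercorollary}.
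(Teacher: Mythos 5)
Your proposal is correct, and it takes the same route as the paper for spanning (Lemma \ref{Lextensionofdiamondpapercorollary}), but a genuinely different route for linear independence. The paper argues independence \emph{directly inside} $\mathcal{H}(G;I)$: pick in each orbit $\mathcal{O}_i$ the dominant representative $\lambda_i$, choose $i$ with $\ell(\lambda_i)$ maximal, and evaluate the dependence relation at $\lambda_i$; the corollary to Lemma \ref{Lhainesfact} (together with $\iwahoriheckethetaelementsymbol_{\lambda}=\normalizediwahoriheckebasissymbol_{\lambda}$ for dominant $\lambda$) forces $z_{\mathcal{O}_i}(\lambda_i)\neq 0$, while the length-maximality of $\lambda_i$ and Lemma \ref{Ldominancefacts}(\ref{Ldomorbit}) force $z_{\mathcal{O}_j}(\lambda_i)=0$ for $j\neq i$, hence $c_i=0$. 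You instead push the relation through $\iota_{\mathcal{H}}$ and appeal to Proposition 3.11 of \cite{lusztig}, using the $\Omega_G$-component bookkeeping and Lemma \ref{LMtorsioninsideGtorsion} to see that distinct $\mathcal{O}_i$ with equal $\Omega_G$-component have distinct images $\iota(\mathcal{O}_i)$. Both are valid. Your version is more uniform with the transport-via-$\iota_{\mathcal{H}}$ style used elsewhere in \S\ref{Sproofbernsteinrelation}--\S\ref{Scenter} and avoids a fresh computation; the paper's evaluation argument is more self-contained (it does not need Lusztig's Proposition 3.11 at all for this step) and exposes directly which basis elements detect each coefficient. One small note: for the hypotheses of Lemma \ref{Lextensionofdiamondpapercorollary}, the paper cites Lemma \ref{Lomegacomponentsofthetas} for the constancy of $\Omega_G(w)$ on the support of $z_{\mathcal{O}}$; your derivation of the same fact from Lemma \ref{Lhainesfact} plus the definition of the extended Bruhat-Chevalley order is equally valid but slightly more roundabout, since Lemma \ref{Lomegacomponentsofthetas} already gives it immediately from the definition of $\iwahoriheckethetaelementsymbol_{\lambda}$.
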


\begin{proof}
We first verify linear-independence. Suppose for contradiction that $ c_1 z_{ \mathcal{O}_1 } + \cdots + c_r z_{ \mathcal{O}_r } = 0 $ for some $ \mathcal{O}_1, \ldots, \mathcal{O}_r \in \Omega_M / \finiteweylgroup $ and $ c_1, \ldots, c_r \in \C^{\times} $. Let $ \lambda_1, \ldots, \lambda_r \in \Omega_M^{\dom} $ be the unique elements such that $ \lambda_i \in \mathcal{O}_i $ for all $ i $ (Lemma \ref{Ldominancefacts}(\ref{Ldomrotate})). Choose $ i $ so that $ \ell ( \lambda_i ) \geq \ell ( \lambda_j ) $ for all $ j $. By the corollary to Lemma \ref{Lhainesfact} and the fact that $ \iwahoriheckethetaelementsymbol_{\lambda} = \normalizediwahoriheckebasissymbol_{\lambda} $ whenever $ \lambda \in \Omega_M^{\dom} $, $ z_{\mathcal{O}_i} ( \lambda_i ) \neq 0 $. If $ j \neq i $ and $ z_{\mathcal{O}_j} ( \lambda_i ) \neq 0 $ then by Lemma \ref{Lhainesfact} there would be some $ \mu \in \mathcal{O}_j $ such that $ \lambda_i \prec \mu $, but this would imply $ \ell ( \lambda_i ) < \ell ( \mu ) $, which contradicts the choice of $ \lambda_i $ since constancy of $ \ell $ on $ \finiteweylgroup $-conjugacy classes (Lemma \ref{Ldominancefacts}(\ref{Ldomorbit})) means $ \ell ( \mu ) = \ell ( \lambda_j ) $. Therefore, $ z_{\mathcal{O}_j} ( \lambda_i ) = 0 $ for all $ j \neq i $ and evaluating both sides of the dependence relation at $ \lambda_i $ yields $ c_i = 0 $, a contradiction. Now we verify spanning. This is immediate by Lemma \ref{Lextensionofdiamondpapercorollary} above since linear independence has already been verified (the hypotheses on $ \ell ( w ) $ and $ \Omega_G ( w ) $ in \cite{roro} are satisfied due, respectively, to Lemma \ref{Lhainesfact} and Lemma \ref{Lomegacomponentsofthetas}).
\end{proof}

\begin{remark}
This Bernstein Isomorphism was also proven in the Appendix of \cite{haines2} using the Bushnell-Kutzko theory of types.
\end{remark}

\section{When are the parameters in the commutation relation ``alternating''?} \label{Scompactbernsteinrelation}

A basic question about the commutation relation (Proposition \ref{Pbernsteincommutationrelation}) is: \emph{Under what circumstances is it possible that $ \indexedparam{0}{s} \neq \indexedparam{1}{s} $?}

On one hand, the answer is known and concise: the parameters for $ \mathcal{H} ( G; I ) $ are the same as for the extended affine Hecke algebra $ \mathcal{H} ( \Psi_M, \Delta_{\aff}, \param ) $ and so \S2.4 of \cite{lusztig} says that $ \indexedparam{0}{s} \neq \indexedparam{1}{s} $ can happen only if $ \overline{\scaledrootsystemsymbol} = \scaledrootsystemsymbol $ is type $ C $. But on the other hand, the type of the scaled root system $ \scaledrootsystemsymbol $ corresponding to $ \Phi_{\aff} $ can be different from the type of the original root system $ \relativerootsystem $, and \S3.4 of \cite{gortz} relates the possibility of $ \indexedparam{0}{s} \neq \indexedparam{1}{s} $ to the lack of sufficiently ``short'' translations in $ \widetilde{W} $.

Because of this, it is interesting to see concretely how the translation subgroup $ \Omega_M $ (especially in comparison to $ \cochargroup ( A ) $) somehow compensates (or doesn't) for a change in type occuring in construction $ \Phi_{\aff} \leadsto \scaledrootsystemsymbol $.

This section includes an example of groups whose relative root system is type $ C $ resp. $ B $ but whose affine root system is type $ B $ (see \S\ref{SSexampleSU}) resp. $ C $ (see \S\ref{SSexampleSO}), together with the translation subgroups $ \Omega_M $ of their Iwahori-Weyl groups. These examples are suggested by the tables in \S1.4.6 of \cite{BT1} and in \cite{tits}.

In what follows, $ E / F $ is a ramified quadratic extension, $ R_{E/F} ( \mult ) $ is the usual restriction-of-scalars torus $ * \mapsto ( * \otimes_F E )^{\times} $, and $ R^1_{E/F} ( \mult ) \subset R_{E/F} ( \mult ) $ is the usual norm torus (kernel of the norm map). Symbols $ \chi_i $ always denote standard generators for the characters of a split torus which will be clear from context.

\subsection{Commutation relation parameters vs. affine hyperplanes} \label{SSbernparameters}

\begin{center}
\emph{This subsection summarizes the ``hyperplane interpretation'' from \cite{gortz} of the parameters $ \indexedparam{0}{s} $ and $ \indexedparam{1}{s} $. For convenience, assume that the reduced root system $ \scaledrootsystemsymbol $ is \emph{irreducible}.}
\end{center}

If $ \mathfrak{f} $ is a face of the base alcove $ \alcovesymbol $ then by definition the \emph{type} of $ \mathfrak{f} $ is the unique $ s \in \Delta_{\aff} $ such that $ s ( \mathfrak{f} ) = \mathfrak{f} $. If $ \mathfrak{f}^{\prime} $ is the face of any other alcove, then there is a unique face $ \mathfrak{f} $ of the base alcove $ \alcovesymbol $ and a unique $ w \in W_{\aff} ( \scaledrootsystemsymbol ) $ such that $ w ( \mathfrak{f} ) = \mathfrak{f}^{\prime} $ and by definition the \emph{type} of $ \mathfrak{f}^{\prime} $ is the type of $ \mathfrak{f} $. In this case, define $ \param ( \mathfrak{f} ) \defeq \param ( s ) $.

If $ \mathfrak{f} $ and $ \mathfrak{f}^{\prime} $ are two faces with types $ s $ and $ s^{\prime} $ and $ w \in \widetilde{W} ( \Psi_M ) $ is such that $ w ( \mathfrak{f} ) = \mathfrak{f}^{\prime} $ then $ s $ is $ w $-conjugate to $ s^{\prime} $ and therefore $ \param ( s ) = \param ( s^{\prime} ) $. The content of Lemma 3.4.1 in \cite{gortz} is that if $ \mathfrak{f} $ and $ \mathfrak{f}^{\prime} $ are two faces contained in the same affine hyperplane $ H $ then the types $ s, s^{\prime} $  are necessarily conjugate and therefore one may define $ \param ( H ) \defeq \param ( s ) $ for any affine hyperplane $ H $ and the type $ s $ of any face contained in $ H $. The content of Lemma 3.4.2(1) in \cite{gortz} is that if $ H $, $ H^{\prime} $, $ H^{\prime \prime} $ are ``consecutive'' parallel affine hyperplanes then the fact that $ s_{ H^{\prime} } ( H ) = H^{\prime \prime} $ implies that $ \param ( H ) = \param ( H^{\prime \prime} ) $.

Therefore, if $ \alpha \in \scaledrootsystemsymbol $ and if $ \mathfrak{H} $ is the set of all affine hyperplanes in $ \apartmentsymbol $ that are parallel to the nullspace $ H $ of $ \apartmentsymbol \stackrel{\alpha}{\longrightarrow} \R $ then the set $ \{ \param ( H^{\prime} ) \suchthat H^{\prime} \in \mathfrak{H} \} $ consists of at most two values. One value is $ \param ( H ) = \param ( s_{\alpha} ) $, and combining Lemma 3.4.2``(2)'' of \cite{gortz} with \S2.4 of \cite{lusztig} yields that the other value (if there is one) is $ \param ( H^{\prime} ) = \param ( \widetilde{s}_{\alpha} ) $, where $ H^{\prime} $ is the nullspace of $ \apartmentsymbol \stackrel{\alpha-1}{\longrightarrow} \R $. If there exists $ w \in \widetilde{W} $ such that $ w ( H ) = H^{\prime} $ then by the previous paragraph $ \param ( H ) = \param ( H^{\prime} ) $, and therefore $ \indexedparam{0}{s_{\alpha}} = \indexedparam{1}{s_{\alpha}} $.

\subsection{Example of ramified even-dimensional unitary groups} \label{SSexampleSU}

Let $ \phi $ be the hermitian form on $ E^6 $ defined by the ``anti-identity'' matrix, i.e. the form defined by the rule $ \phi ( ( x_1, x_2, x_3, x_4, x_5, x_6 ), ( y_1, y_2, y_3, y_4, y_5, y_6 ) ) = x_1 \overline{y}_6 + x_2 \overline{y}_5 + x_3 \overline{y}_4 + x_4 \overline{y}_3 + x_5 \overline{y}_2 + x_6 \overline{y}_1 $ and let $ G $ be the unitary group of $ \phi $.

Recall that the relative root system $ \relativerootsystem $ of $ G $ is type $ C_3 $. Recall from \S2.d.1 of \cite{PR3} that the affine root system $ \Phi_{\aff} $ of $ G $ is
$ \{ \pm \chi_i \pm \chi_j + \frac{1}{2} \Z, \pm 2 \chi_i + \Z \suchthat i, j \in \{ 1, 2, 3 \}, i \neq j \} $. It follows that the scaled root system $ \scaledrootsystemsymbol $ for $ \Phi_{\aff} $ is type $ B_3 $.

The discussion in \S\ref{SSbernparameters} suggests that whenever $ \alpha \in \relativerootsystem $ and $ r \in \R $ are such that the hyperplanes $ H_{\alpha} $ and $ H_{\alpha + r} $ are consecutive there should be an element $ w \in \widetilde{W} $ such that $ w ( H_{\alpha} ) = H_{\alpha + r} $. In fact, there is a translation $ t \in \widetilde{W} $ accomplishing this. By basic properties of apartments and knowledge of the above affine root system $ \Phi_{\aff} $, the existence of such a $ t $ will follow from the existence for each $ i = 1, 2, 3 $ of $ \lambda_i \in \Omega_M $ such that $ \langle \chi_i, \lambda_i \rangle_{\R} = \frac{1}{2} $, which we now verify directly.

The $ 6 $-dimensional diagonal subtorus $ T = R_{E/F} ( \mult ) \times R_{E/F} ( \mult ) \times R_{E/F} ( \mult ) $ is a maximal $ F $-torus in $ G $ and the $ 3 $-dimensional split subtorus $ A = \mult \times \mult \times \mult $ is a maximal $ F $-split subtorus of $ G $. Since $ G $ is quasi-split, $ M \defeq C_G ( A ) = T $, and to compute how $ \Omega_T $ acts on the apartment $ \apartmentsymbol \defeq \cochargroup ( A ) \otimes_{\Z} \R $, we need to compute the natural map $ \cochargroup ( A ) \rightarrow ( \cochargroup ( T )_{\inertia} )^{\frob} = \Omega_T $. One can compute that $ \cochargroup ( \mult ) \incarrow \cochargroup ( R_{E/F} ( \mult ) ) $ is identified as $ \Z \rightarrow \Z \oplus \Z $ by $ n \mapsto ( n, n ) $ and that $ \cochargroup ( R_{E/F} ( \mult ) ) \canarrow ( \cochargroup ( R_{E/F} ( \mult ) )_{\inertia} )^{\frob} $ is identified as $ \Z \oplus \Z \rightarrow \Z $ by $ ( m, n ) \mapsto m + n $. Therefore, the action $ \Omega_T \rightarrow \cochargroup ( A ) \otimes_{\Z} \R $ of $ \Omega_T $ via translations is identified as $ \Z^3 \rightarrow \R^3 $ by $ ( a, b, c ) \mapsto \left( \frac{a}{2}, \frac{b}{2}, \frac{c}{2} \right) $. In particular, for each $ i = 1, 2, 3 $, there exists $ \lambda_i \in \Omega_T $ such that $ \langle \chi_i, \lambda_i \rangle_{\R} = \frac{1}{2} $.

\subsection{Example of ramified even-dimensional orthogonal groups} \label{SSexampleSO}

Let $ Q $ be the quadratic form (with non-trivial Hasse invariant) on $ F^3 \oplus E \oplus F^3 $ described in \S1.16 of \cite{tits}. Let $ G $ be the special orthogonal group of $ Q $.

By \S1.16 of \cite{tits}, the relative root system is type $ B_3 $, while the affine root system $ \Phi_{\aff} $ is $ \{ \pm \chi_i \pm \chi_j + \Z, \pm \chi_i + \frac{1}{2} \Z \suchthat i, j \in \{ 1, 2, 3 \}, i \neq j \} $. It follows that the scaled root system $ \scaledrootsystemsymbol $ for $ \Phi_{\aff} $ is type $ C_3 $.

The discussion in \S\ref{SSbernparameters} suggests that for each $ i = 1, 2, 3 $ there should \emph{not} exist any $ \lambda_i \in \Omega_M $ such that $ \langle \chi_i, \lambda_i \rangle_{\R} = \frac{1}{2} $ because otherwise $ t_{\lambda_i} ( H_{\chi_i} ) = H_{ \chi_i - \frac{1}{2} } $ and therefore $ \widetilde{W} $ would be transitive on the family $ \mathfrak{H} $ of affine hyperplanes parallel to $ H_{\chi_i} $ (although for this particular group $ \indexedparam{0}{s} = \indexedparam{1}{s} $ anyway because the Hecke algebra parameters $ \param : \Delta_{\aff} \rightarrow \N $ are constant; see \S1.16 of \cite{tits}). We verify this directly.

Since elements of $ R^1_{E/F} ( \mult ) $ are $ F $-linear norm-preserving transformations of $ E $, it is clear from \S1.16 of \cite{tits} that $ G $ contains the diagonal subgroup $ T = \mult \times \mult \times \mult \times R^1_{E/F} ( \mult ) $ as a maximal $ F $-torus, and the subgroup $ A = \mult \times \mult \times \mult $ is a maximal $ F $-split torus.

Since (by \cite{tits}) $ G $ is quasi-split, $ M \defeq C_G ( A ) = T $, and to compute how $ \Omega_T $ acts on the apartment $ \apartmentsymbol \defeq \cochargroup ( A ) \otimes_{\Z} \R $, we need to compute the natural map $ \cochargroup ( A ) \rightarrow ( \cochargroup ( T )_{\inertia} )^{\frob} $. One can identify $ \cochargroup ( R^1_{E/F} ( \mult ) )_{\inertia} $ as $ \Z / 2 \Z $ and so $ \cochargroup ( A ) \rightarrow ( \cochargroup ( T )_{\inertia} )^{\frob} $ is identified as $ \Z^3 \longrightarrow \Z^3 \oplus \Z / 2 \Z $ by $ ( a, b, c ) \mapsto ( a, b, c, [0] ) $. It follows that the action $ \Omega_T \rightarrow \cochargroup ( A ) \otimes_{\Z} \R $ of $ \Omega_T $ on $ \apartmentsymbol $ by translations is identified as $ \Z^3 \oplus \Z / 2 \Z \rightarrow \R^3 $ by $ ( a, b, c, d \text{ mod $ 2 $} ) \mapsto ( a, b, c ) $. In particular, for each $ i = 1, 2, 3 $ there is no $ \lambda_i \in \Omega_T $ such that $ \langle \chi_i, \lambda_i \rangle_{\R} = \frac{1}{2} $.

\begin{bibdiv}
\begin{biblist}

\bib{bourbaki}{book}{
   author={Bourbaki, Nicolas},
   title={Lie groups and Lie algebras. Chapters 4--6},
   series={Elements of Mathematics (Berlin)},
   note={Translated from the 1968 French original by Andrew Pressley},
   publisher={Springer-Verlag},
   place={Berlin},
   date={2002},
   pages={xii+300},
   isbn={3-540-42650-7},
}

\bib{BT1}{article}{
   author={Bruhat, F.},
   author={Tits, J.},
   title={Groupes r\'eductifs sur un corps local},
   language={French},
   journal={Inst. Hautes \'Etudes Sci. Publ. Math.},
   number={41},
   date={1972},
   pages={5--251},
}

\bib{BT2}{article}{
   author={Bruhat, F.},
   author={Tits, J.},
   title={Groupes r\'eductifs sur un corps local. II. Sch\'emas en groupes.
   Existence d'une donn\'ee radicielle valu\'ee},
   language={French},
   journal={Inst. Hautes \'Etudes Sci. Publ. Math.},
   number={60},
   date={1984},
   pages={197--376},
}

\bib{gortz}{article}{
   author={G{\"o}rtz, Ulrich},
   title={Alcove walks and nearby cycles on affine flag manifolds},
   journal={J. Algebraic Combin.},
   volume={26},
   date={2007},
   number={4},
   pages={415--430},
}

\bib{haines1}{article}{
   author={Haines, Thomas J.},
   title={The combinatorics of Bernstein functions},
   journal={Trans. Amer. Math. Soc.},
   volume={353},
   date={2001},
   number={3},
   pages={1251--1278 (electronic)},
}

\bib{haines2}{article}{author={Haines, Thomas},title={The stable Bernstein center and test functions for Shimura varieties},date={2013}, status={To appear in \emph{Proceedings of the London Mathematical Society - EPSRC Durham Symposium on Automorphic Forms and Galois Representations}}, eprint={arXiv:1304.6293}}

\bib{HRo}{article}{
   author={Haines, Thomas J.},
   author={Rostami, Sean},
   title={The Satake isomorphism for special maximal parahoric Hecke
   algebras},
   journal={Represent. Theory},
   volume={14},
   date={2010},
   pages={264--284},
}

\bib{kottwitz}{article}{
   author={Kottwitz, Robert E.},
   title={Isocrystals with additional structure. II},
   journal={Compositio Math.},
   volume={109},
   date={1997},
   number={3},
   pages={255--339},
}

\bib{lusztig}{article}{
   author={Lusztig, George},
   title={Affine Hecke algebras and their graded version},
   journal={J. Amer. Math. Soc.},
   volume={2},
   date={1989},
   number={3},
   pages={599--635},
}

\bib{PRH}{article}{
   author={Pappas, G.},
   author={Rapoport, M.},
   title={Twisted loop groups and their affine flag varieties},
   note={With an appendix by T. Haines and Rapoport},
   journal={Adv. Math.},
   volume={219},
   date={2008},
   number={1},
   pages={118--198},
}

\bib{PR3}{article}{
   author={Pappas, G.},
   author={Rapoport, M.},
   title={Local models in the ramified case. III. Unitary groups},
   journal={J. Inst. Math. Jussieu},
   volume={8},
   date={2009},
   number={3},
   pages={507--564},
}

\bib{PZ}{article}{
   author={Pappas, G.},
   author={Zhu, X.},
   title={Local models of Shimura varieties and a conjecture of Kottwitz},
   journal={Invent. Math.},
   volume={194},
   date={2013},
   number={1},
   pages={147--254},
}

\bib{richarz}{article}{author={Richarz, Timo},title={On the Iwahori-Weyl group},date={2013},eprint={arXiv:1310.4635}}

\bib{roro}{article}{author={Rostami, Sean},title={Conjugacy classes of non-translations in affine Weyl groups and applications to Hecke algebras},date={2013},status={To appear in \emph{Trans. Amer. Math. Soc.}},eprint={arXiv:1306.5255}}

\bib{tits}{article}{
   author={Tits, J.},
   title={Reductive groups over local fields},
   conference={
      title={Automorphic forms, representations and $L$-functions (Proc.
      Sympos. Pure Math., Oregon State Univ., Corvallis, Ore., 1977), Part
      1},
   },
   book={
      series={Proc. Sympos. Pure Math., XXXIII},
      publisher={Amer. Math. Soc.},
      place={Providence, R.I.},
   },
   date={1979},
   pages={29--69},
}

\bib{vigneras}{article}{author={Vign\'{e}ras, Marie-France},title={The pro-$ p $-Iwahori Hecke algebra of a reductive $ p $-adic group},eprint={http://webusers.imj-prg.fr/~marie-france.vigneras/}}

\end{biblist}
\end{bibdiv}

\end{document}